\documentclass[10pt]{amsart}
\textheight = 21cm
\textwidth = 13cm
\usepackage{amsmath}
\usepackage{amsfonts}
\usepackage{amssymb}
\usepackage{amsthm}
\usepackage{url}
\usepackage[all]{xy}
\usepackage{dsfont} %
\usepackage{graphicx}
\usepackage{caption}
\usepackage{subcaption}
\usepackage{comment} 
\usepackage{stmaryrd}%
\usepackage{hyperref}
\usepackage{todonotes}
\usepackage{color}
\usepackage{enumerate,tikz-cd}
\usepackage{fixltx2e}
\usepackage{MnSymbol}%
\usepackage{soul}
\usepackage{enumitem}
\allowdisplaybreaks


\numberwithin{equation}{section}

\newtheorem{proposition}{Proposition}[section]
\newtheorem{lemma}[proposition]{Lemma}
\newtheorem{theorem}[proposition]{Theorem}
\newtheorem{corollary}[proposition]{Corollary}

\theoremstyle{definition}
\newtheorem{remark}[proposition]{Remark}
\newtheorem{definition}[proposition]{Definition}
\newtheorem{example}[proposition]{Example}

\DeclareMathOperator{\End}{End}
\DeclareMathOperator{\Id}{Id}
\DeclareMathOperator{\tr}{tr}

\DeclareMathOperator{\Ric}{Ric}

\DeclareMathOperator{\V}{\mathcal{V}}
\DeclareMathOperator{\ch}{ch}
\DeclareMathOperator{\Td}{Td}
\DeclareMathOperator{\rk}{rk}

\DeclareMathOperator{\Ima}{Im}

\DeclareMathOperator{\Lie}{Lie}

\DeclareMathOperator{\eq}{eq}

\DeclareMathOperator{\Adj}{Adj}

\newcommand{\R}{\mathbb{R}}
\newcommand{\C}{\mathbb{C}}

\newcommand{\G}{\mathcal{G}}

\renewcommand{\epsilon}{\varepsilon}

\newcommand{\scA}{\mathcal{A}}

\newcommand{\scV}{\mathcal{V}}
\newcommand{\ddb}{i\partial \bar\partial}

\newcommand{\J}{\mathcal{J}}

\newcommand{\mfk}{\mathfrak{k}}

\newcommand{\E}{\mathcal{E}}
\renewcommand{\L}{\mathcal{L}}

\newcommand{\U}{\mathcal{U}}

\renewcommand{\G}{\mathcal{G}}

\newcommand{\ddbar}{\partial\overline{\partial}}
\renewcommand{\phi}{\varphi}

\renewcommand{\d}{\partial}
\newcommand{\db}{\bar\partial}

\renewcommand{\O}{\mathrm{O}}

\pagestyle{headings} \setcounter{tocdepth}{1}
\title[Moment maps in complex geometry]{ The universal structure of moment maps in complex geometry}

\author[Ruadha\'i Dervan and Michael Hallam]{Ruadha\'i Dervan and Michael Hallam}

\address{Ruadha\'i Dervan, School of Mathematics and Statistics, University of Glasgow, University Place, Glasgow G12 8QQ, United Kingdom}\email{ruadhai.dervan@glasgow.ac.uk}

\address{Michael Hallam, Department of Mathematics, University of Aarhus, Ny Munkegade 118, 8000 Aarhus C, Denmark}\email{hallam@math.au.dk}
\begin{document}

\begin{abstract} We introduce a geometric approach to the construction of moment maps in finite and infinite-dimensional complex geometry. We apply this to two settings: K\"ahler manifolds and holomorphic vector bundles. Our new approach exploits the existence of universal families and the theory of equivariant differential forms.

We first give a new, geometric proof of Donaldson--Fujiki's moment map interpretation of the scalar curvature. Associated to arbitrary products of Chern characters of the manifold---namely to a central charge---we  further introduce a geometric PDE determining a $Z$-critical K\"ahler metric, and show that these general equations also satisfy moment map properties. For holomorphic vector bundles, using a similar strategy  we  give a geometric proof of Atiyah--Bott's moment map interpretation of the Hermitian Yang--Mills condition. We then go on to give a new, geometric proof that the PDE determining a $Z$-critical connection---again associated to a choice of central charge---can be viewed as  a moment map; deformed Hermitian Yang--Mills connections are a special case, in which our work gives a geometric proof of a result of Collins--Yau.

Our main assertion is that this is the canonical way of producing moment maps in complex geometry---associated to any geometric problem along with a choice of stability condition---and hence that this accomplishes one of the main steps towards producing PDE counterparts to stability conditions in large generality.
\end{abstract}

\maketitle

\section{Introduction}

Many of the most important geometric PDEs in complex geometry can be viewed as moment maps. The two most prominent such examples are the \emph{constant scalar curvature K\"ahler (cscK) equation}, which through Donaldson--Fujiki arises as a moment map on the space of almost complex structures on a compact symplectic manifold  \cite{SD-moment,AF}, and the \emph{Hermitian Yang--Mills equation}, which through Atiyah--Bott arises as a moment map on the space of unitary connections on a Hermitian vector bundle \cite{AB}. These moment map properties are ultimately behind the deep links between these PDEs and algebro-geometric stability conditions (through the Yau--Tian--Donaldson conjecture \cite{STY,GT,SD-toric} and the Hitchin--Kobayashi correspondence of Donaldson--Uhlenbeck--Yau  \cite{SD-surfaces,SD-infinite,UY} respectively).

In algebraic geometry, it has become increasingly important to consider general classes of stability conditions  \cite{TB}. It is thus desirable to associate  geometric PDEs to  general stability conditions, in such a way that solvability of these PDEs is equivalent to stability. The fundamental issue is that the classical PDEs of interest---notably the cscK equation and the Hermitian Yang--Mills equation---arose from much more classical geometric considerations, with their moment map properties being proven far later. The proofs that these (and other) equations satisfy moment map properties are usually by direct calculation, and do not quite explain precisely why these equations arise as moment maps. Thus in aiming to generate general classes of geometric PDEs associated with algebro-geometric stability conditions, a new approach is needed. 

What we achieve here is as follows: \begin{enumerate}[label={(\roman*)}]
\item We give a new, geometric approach to the construction of moment maps in large generality. Through our approach, moment maps arise whenever one has an equivariant family of objects (such as a family of complex manifolds or holomorphic vector bundles). The input needed to introduce a moment map is essentially the same as that needed to define an algebro-geometric stability condition in a given setting. Thus from (essentially) the \emph{same} input, we obtain both a geometric PDE and a stability condition, generating automatic links between analysis and algebraic geometry.
\item We use this approach to derive a new general class of geometric PDEs for K\"ahler metrics on complex manifolds. A special case thus gives a geometric derivation of the cscK equation. On holomorphic vector bundles, we give a new proof that the Hermitian Yang--Mills equation is a moment map; our proof extends to the deformed Hermitian Yang--Mills equation (first proven by Collins--Yau in rank one \cite{CY})---giving a geometric derivation of this equation---and further to the  $Z$-critical connection equation \cite{DMS}. 
\item Our technique applies in more generality than prior approaches. The ideas apply both to finite-dimensional and infinite-dimensional situations, and allow simultaneous variation of both complex structure and metric structure (this is important for technical applications of our moment map results \cite{ortu-sektnan,ortu, DS-AP}). \end{enumerate}

\subsection*{Main results for manifolds.} Our approach is geometric, and quite simple. We primarily discuss the finite-dimensional case, though the argument in the infinite-dimensional case is similar. The setup involves a proper holomorphic submersion $\pi: (X,\alpha) \to B$, with $X$ and $B$ complex manifolds and with $\alpha$ a class on $X$ which is K\"ahler on each fibre of $\pi$. We assume that a compact Lie group $K$ acts on $X$ and $B$ by biholomorphisms such that $\pi$ is a $K$-equivariant map, and fix a $K$-invariant form $\omega \in \alpha$ which is K\"ahler on each fibre. This relatively K\"ahler metric $\omega$ induces a Hermitian metric on the relative anticanonical class $-K_{X/B}$ of $X\to B$, with curvature (times \(\frac{i}{2\pi}\)) which we denote by $\rho \in c_1(-K_{X/B})$. The form $\rho$ can be viewed as a relative analogue of the Ricci curvature, and on each fibre restricts to the Ricci curvature of the restriction of $\omega$.

We then invoke the classical theory of equivariant differential forms.  Equivariant differential geometry has been used in the theory of constant scalar curvature K\"ahler metrics since at least the work of Futaki--Morita \cite{futaki-mabuchi}; we learned the theory through the more recent work of Legendre, Inoue and Corradini \cite{EL, EI, AC}. We assume that the \(K\)-action is \(\omega\)-Hamiltonian, so that there exists a moment map $\mu: X \to \mfk^* = (\Lie K)^*$ for the $K$-action on $(X,\omega)$ (where here and throughout we do not demand positivity of $\omega$ in the definition of a moment map). The equivariant differential form defined by sending $v\in \mfk$ to $\omega + \langle \mu,v\rangle$ is then equivariantly closed. We show that in turn this implies that the equivariant differential form $$v \mapsto \rho - \frac{1}{2\pi}\Delta_{\V}\langle \mu, v\rangle$$ is equivariantly closed, where $\Delta_{\V}$ is the vertical Laplacian (i.e. calculated fibrewise). This then implies that the equivariant form $\eta: \mathfrak{k} \to \Omega^*(X)$ defined by the wedge product $$\eta(v) :=  \frac{\hat S_b}{n+1}(\omega + \langle \mu,v\rangle)^{n+1} - \left(\rho - \frac{1}{2\pi}\Delta_{\V}\langle \mu, v\rangle\right)\wedge (\omega + \langle \mu,v\rangle)^n $$ is equivariantly closed, where $n:=\dim X - \dim B$, and $\hat S_b$ is the topological constant defining the fibrewise average scalar curvature. Since $\pi: X\to B$ is $K$-equivariant, it follows that the fibre integral of this form is itself equivariantly closed. For each $v\in\mathfrak{k}$, for degree reasons, this form is the sum of a $(1,1)$-form \(\Omega\) and a function \(\sigma_v\) on $B$. A direct, simple calculation shows that the function is defined by $$\sigma_v(b) = \int_{X_b} \langle\mu,v\rangle|_{X_b}(\hat S_b - S(\omega_b)) \omega_b^n,$$ where $\omega_b$ denotes the restriction of $\omega$ to $X_b$; the calculation involves using that the integral of a function in the image of the Laplacian vanishes.  The $(1,1)$-form on $B$ is $$\Omega = \frac{\hat S_b}{n+1}\int_{X/B}\omega^{n+1} -\int_{X/B} \rho \wedge \omega^n,$$ and that the equivariant differential form \(v\mapsto \Omega + \sigma_v\) on $B$ is equivariantly closed is precisely the moment map condition for the scalar curvature with respect to $\Omega$. 

Thus we obtain the moment map property for the scalar curvature through equivariant differential geometry and the theory of equivariant holomorphic submersions. The form $\Omega$ is the standard Weil--Petersson form involved in the cscK theory, and while it is not K\"ahler in general (which we permit in our definition of a moment map), it is K\"ahler in many situations of interest and its positivity is well-understood \cite{FS-moduli, AF}. 

One can view this as a \emph{derivation} of the scalar curvature. From this perspective, it becomes clear what the choice involved was: it was exactly the choice of equivariant form $\eta$ on $X$. We view this choice as arising from one copy of the first Chern class (through the involvement of $\rho$) and a complementary number of copies of $\alpha$ (through the involvement of $\omega$) so that the total degree is correct. The most general input thus involves products of \emph{arbitrary} Chern classes of correct total degree, and this is encoded in the notion of a  \emph{central charge} $Z$, which is ultimately a choice of topological input. Through our moment map construction, we then introduce the notion of a $Z$\emph{-critical K\"ahler metric}, solving a PDE taking the form (on a fibre $(X_b,\omega_b)$) $$\Ima(e^{-i\phi} \tilde Z(X_b, \omega_b)) = 0,$$ associated to the central charge $Z$, where $\tilde Z(X_b, \omega_b)$ is a complex-valued function involving various curvature quantities associated to the metric and $\phi \in (-\pi,\pi]$. This equation had been derived in \cite{stabilityconditions} in the case when only products of the first Chern class are used, and our new approach allows us to derive the correct equation in general.

The general, explicit form of the PDE is rather intricate---see Definition \ref{defpde}---and involves more than just Chern--Weil theory (essentially the coincidence that the ``vertical Laplacian term'' vanishes in the derivation of the scalar curvature as a moment map does not happen in general, complicating matters significantly). This leads to an equation that is a \emph{sixth-order} PDE in the K\"ahler potential, in comparison with the \emph{fourth-order} cscK equation. We emphasise, however, that our derivation of the equation (and hence the PDE itself) is completely geometric and canonical, starting from the choice of topological input. Our usage of complex-valued central charges, and imaginary parts of complex-valued functions is essentially an aesthetic choice that matches what is well-established in the Bridgeland stability literature, and does not play a significant role in our work (but is fundamental to the structure of the ``stability manifold'' in the theory of Bridgeland stability conditions \cite{TB}).

In the algebraic direction, through \cite{stabilityconditions, git-stabilityconditions}, a central charge canonically induces a notion of $Z$\emph{-stability}, generalising the notion of K-stability involved in the Yau--Tian--Donaldson conjecture. Thus, crucially, the same topological input---a central charge---induces in a canonical way both a geometric PDE and a stability condition.

We summarise our first main result as follows:

\begin{theorem}\label{introthm1}
The $Z$-critical K\"ahler operator arises as a moment map, in both finite and infinite dimensions.
\end{theorem}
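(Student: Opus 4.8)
The plan is to mirror, at the level of an abstract framework, the explicit calculation already carried out for the scalar curvature in the introduction, and then specialise it to the $Z$-critical operator. The key structural observation is that the moment map property in both finite and infinite dimensions is a consequence of a single mechanism: one constructs a $K$-equivariantly closed form $\eta(v)$ on the total space $X$ of an equivariant family $\pi\colon X\to B$, built by wedging together equivariant extensions of the relevant characteristic forms (powers of $\omega+\langle\mu,v\rangle$ together with Chern--Weil representatives such as $\rho-\tfrac{1}{2\pi}\Delta_\V\langle\mu,v\rangle$), and then one takes the fibre integral $\int_{X/B}\eta$. Since $\pi$ is $K$-equivariant, fibre integration commutes with the equivariant differential, so the fibre integral is automatically equivariantly closed on $B$. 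For degree reasons this fibre integral decomposes as a $(1,1)$-form $\Omega$ plus a moment-map function $\sigma_v$, and equivariant closedness of $\Omega+\sigma_v$ is, by definition, exactly the moment map equation.

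First I would make precise the notion of the equivariant extension associated to a central charge: given the topological data encoded in $Z$, one forms the corresponding product of equivariant Chern forms and the equivariant K\"ahler form, obtaining an equivariantly closed form $\eta_Z(v)$ on $X$ whose construction is entirely canonical. The crucial input here is the equivariant closedness of each factor, which follows from the Hamiltonian hypothesis (so that $\omega+\langle\mu,v\rangle$ is equivariantly closed) together with the equivariant refinement of Chern--Weil theory that upgrades $\rho$ to an equivariantly closed form; these are precisely the ingredients invoked for scalar curvature. Second, I would identify the function part $\sigma_v$ of $\int_{X/B}\eta_Z$ by a fibrewise computation, showing that it is the integral over $X_b$ of $\langle\mu,v\rangle$ paired against the $Z$-critical operator, thereby matching Definition \ref{defpde}. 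The extraction of this function is where the vertical Laplacian terms enter, and unlike the scalar-curvature case they do not integrate to zero; carefully tracking them is what produces the genuinely $6^{\text{th}}$-order operator.

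The finite-dimensional statement then follows immediately. For the infinite-dimensional statement, the plan is to apply this same machinery to the universal family over an infinite-dimensional base parametrising the geometric data (almost complex structures, or equivalently K\"ahler metrics, on a fixed manifold), where $K$ is the group of Hamiltonian symplectomorphisms. The abstract argument is formally identical, but one must justify that fibre integration, the equivariant Cartan differential, and the degree decomposition all make sense in the Fr\'echet setting.

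The main obstacle, I expect, is precisely this infinite-dimensional bookkeeping: verifying that the formal manipulations with equivariant forms and fibre integrals are legitimate when the base is the space of complex structures and the structure group is infinite-dimensional, and in particular checking that the moment map one extracts genuinely pairs against elements of the Lie algebra in the appropriate weak sense. A secondary but substantial difficulty is the purely computational task of isolating the function part $\sigma_v$ for a general central charge, where the non-vanishing of the vertical Laplacian contributions means one cannot simply discard terms as in the scalar-curvature derivation, and one must verify that the surviving expression is exactly $\Ima(e^{-i\phi}\tilde Z(X_b,\omega_b))$ paired with the Hamiltonian.
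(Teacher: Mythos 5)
Your finite-dimensional argument is essentially the paper's: equivariant extensions of the Chern--Weil forms, wedge products, fibre integration over the $K$-equivariant submersion, and an integration-by-parts step converting the endomorphism-valued terms into the adjoint operators $\d^*\db^*$ that make the operator sixth order. The one ingredient you leave implicit is the paper's key computational lemma (Proposition \ref{equivariant-chern-weil} and Corollary \ref{equivariant-chern-weil-relative}): it is not enough to know abstractly that an equivariant extension of the curvature exists --- one needs the \emph{explicit} moment map $\langle\sigma,v\rangle = g_{\scV}^{-1}i\db_{\scV}\d_{\scV}\langle\mu,v\rangle$ for the action on the vertical tangent bundle, proved by a normal-coordinate calculation, since it is precisely this formula that makes the function part of the fibre integral come out as $\Ima(e^{-i\phi}\tilde Z(X_b,\omega_b))$ rather than some unidentified density. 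With that supplied, your plan yields Theorem \ref{thm:fin_dim_manifold_moment_map}.

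The genuine gap is in the infinite-dimensional half, where you misidentify the obstacle. The difficulty is not ``Fr\'echet bookkeeping'' for fibre integrals and the Cartan differential --- the fibres are the fixed compact manifold $M$, so those operations are unproblematic. The real problems are: (i) the space $\J^{\inte}(M,\omega)$ of \emph{integrable} almost complex structures is in general singular, hence not a Fr\'echet manifold at all, so ``the formally identical argument'' has no ambient smooth structure to run on; and (ii) the normal-coordinate computation underlying Proposition \ref{equivariant-chern-weil} requires holomorphic coordinates, which forces one to restrict to finite-dimensional \emph{holomorphic} families through a given $J$ --- but such families need not exist, because the flow of $\mathbb{J}^{\mathcal{J}}A_f$ need not exist and any finite-dimensional slice through $J$ in the direction of an arbitrary tangent vector may leave $\J^{\inte}$. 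The paper's resolution occupies all of Section \ref{sec:ac-structures}: one Taylor-truncates the real flow to build a holomorphic map $\sigma:\Delta_z\times\Delta_w\to\J$ whose fibres are only \emph{almost} integrable, with Nijenhuis tensor $\O(|z|^2)$ (Proposition \ref{finite-dim-reduction}), and then reproves the curvature moment-map identity in this almost-holomorphic setting using approximate holomorphic coordinates constructed via the first steps of the Newlander--Nirenberg theorem (Lemma \ref{frame-choice}, Propositions \ref{almost-hol-moment-map} and \ref{almost-hol-moment-map-relative}), checking that all error terms vanish to sufficiently high order at the central fibre. Without this approximation machinery --- or some substitute such as a direct infinite-dimensional curvature computation in the style of Donaldson--Fujiki, which would be a different proof --- your outline does not establish the infinite-dimensional statement.
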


The precise statement is given in Theorem \ref{thm:fin_dim_manifold_moment_map} in finite dimensions and Theorem \ref{thm:infinite_dim_manifold_moment_map} in infinite dimensions. The main point is to calculate the Chern--Weil representatives of the equivariant Chern characters of the relative tangent bundle of $X \to B$, defined through the Hermitian metric induced by the relatively K\"ahler metric $\omega$. In infinite dimensions, we consider the space $\J(M,\omega)$ of  almost complex structures on a fixed compact symplectic manifold, which is the setting of the Donaldson--Fujiki moment map interpretation of the scalar curvature \cite{SD-moment,AF} (which we thus give a new geometric proof of). We employ the universal family over this space, which is an almost holomorphic submersion of finite relative dimension, meaning that the geometric setting is directly analogous to that of the finite-dimensional one, and similar techniques apply to give moment map properties also in this setting. Here, the key is again to calculate equivariant Chern--Weil representatives in this almost K\"ahler setting.

Another consequence of our work, for which we expect future applications, is the production of a great many equivariantly closed differential forms on $\J(M,\omega)$ (and more generally the base of any equivariant family), which are equivariant representatives of natural, geometric cohomology classes.

\subsection*{Main results for holomorphic vector bundles} The setup and results are very similar in the setting of vector bundles. Here the notion of a central charge is a basic part of the input into the notion of a \emph{Bridgeland stability condition}, and involves certain products of Chern characters of the vector bundle itself. As a conjectural  model analytic counterpart to Bridgeland stability conditions, in \cite{DMS} the notion of a \emph{$Z$-critical connection} was introduced, as a geometric PDE for a connection $A$ taking the form $$\Ima(e^{-i\phi} \tilde Z(E, A)) = 0,$$ where $\tilde Z(E, A)$ is a complex $\End E$-valued $(n,n)$-form and $\Ima$ denotes (\(-i\) times) the skew-Hermitian part of the endomorphism (see Definition \ref{def:z-connection}).

The $Z$-critical equation is a generalisation of the \emph{deformed Hermitian Yang--Mills equation}, which is the most important special case (beyond the classical Hermitian Yang--Mills equation, to which our results also apply). Under mirror symmetry, this equation was introduced as the mirror of the \emph{special Lagrangian equation} by Leung--Yau--Zaslow \cite{LYZ}; there is also a direct derivation from string theory \cite{MMMS}. The deformed Hermitian Yang--Mills equation is simply the $Z$-critical equation for a special choice of central charge. There is an enormous body of important work on this equation on line bundles, due to  Collins, Jacob and Yau \cite{JY, CJY,CY}, Chen \cite{GC} and Datar--Pingali \cite{DP} (see also Song \cite{JS}), but relatively little is known in higher rank.

We fix a compact K{\"a}hler manifold \((X, \omega)\) and consider a family of holomorphic vector bundles on \(X\) parametrised by a base $B$ (which may not be compact); that is, a holomorphic vector bundle $\pi: E \to B \times X$. We endow $E$ with a metric and compatible connection, and assume there is a $K$-action on $B$ with a suitable lift to $E$. Our main result in this setting then canonically constructs the $Z$-critical equation as a moment map on $B$, again using equivariant Chern--Weil theory. In infinite dimensions, we consider the space of unitary connections over a fixed Hermitian vector bundle, as in Atiyah--Bott \cite{AB}. We show this space admits a universal vector bundle with a universal connection, linking with the finite-dimensional picture, and use similar geometric ideas to prove:

\begin{theorem}\label{introthm2}
The $Z$-critical connection operator arises as a moment map, in both finite and infinite dimensions.
\end{theorem}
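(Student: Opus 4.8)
The plan is to transcribe, into the setting of holomorphic vector bundles, the equivariant Chern--Weil strategy used above for the scalar curvature, with the r\^ole of the relative anti-canonical class and its curvature $\rho$ now played by the Chern classes of the family of bundles and the curvature of a compatible connection. In finite dimensions I would begin with the family $\pi\colon E \to B\times X$ equipped with its Hermitian metric, compatible connection, and the given lift of the $K$-action. The curvature $F$ of this connection furnishes, via the classical Chern--Weil homomorphism, explicit $K$-equivariant representatives of the equivariant Chern (equivalently, Chern character) classes $\ch_k^K(E)$ on $B\times X$. The central charge $Z$ then prescribes a fixed polynomial in these classes, wedged with the appropriate powers of $\omega$ to land in the correct total degree; assembling the representatives accordingly produces an $\End E$-valued equivariant form $\eta(v)$ on $B\times X$ which, being built from equivariantly closed pieces, is itself equivariantly closed. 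This is the exact analogue of the equivariant form $\eta(v)$ appearing in the cscK discussion above.

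The next step is fibre integration. Since $X$ is compact and the projection $B\times X \to B$ is trivially $K$-equivariant, integrating $\eta(v)$ over the fibre $X$ yields an equivariantly closed $\End E$-valued form on $B$. As in the manifold case, for degree reasons this decomposes into a $(1,1)$-form $\Omega$ on $B$ -- the relevant Atiyah--Bott type symplectic form -- together with a degree-zero piece $\sigma_v$, the pairing (by $\tr$ and integration over $X$) of $v$ with an $\End E$-valued $(n,n)$-form on $X$. The content of the theorem is that this $\End E$-valued form is, after applying $\Ima$ (the skew-Hermitian projection), precisely the $Z$-critical operator $\Ima(e^{-i\phi}\tilde Z(E,A))$ of Definition~\ref{def:z-connection}, and that equivariant closedness of $v\mapsto \Omega + \sigma_v$ is exactly the moment map identity for the $K$-action with respect to $\Omega$. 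The bookkeeping here is the careful matching of the trace and skew-Hermitian projections, and of the factor $e^{-i\phi}$, with the explicit shape of $\tilde Z(E,A)$.

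For the infinite-dimensional statement I would take $B$ to be the affine space $\cA$ of unitary connections on a fixed Hermitian bundle $(E,h)\to(X,\omega)$, with the gauge group $\cG$ playing the r\^ole of $K$, as in Atiyah--Bott~\cite{AB}. The essential new ingredient is the construction of a universal bundle $\cE\to \cA\times X$ carrying a \emph{universal connection} $\mathbb{A}$ whose restriction to each slice $\{A\}\times X$ recovers $A$; one then computes the curvature $\mathbb{F}$ of $\mathbb{A}$ and identifies its components of mixed bidegree along $\cA$ and $X$ (the tautological one-form), which are exactly what produce $\Omega$ and the moment map after fibre integration over $X$. Feeding $\mathbb{F}$ into the same equivariant Chern--Weil construction and integrating over $X$ reproduces the finite-dimensional computation, yielding the $Z$-critical connection operator as a moment map for $\cG$ acting on $\cA$, and recovering Hermitian Yang--Mills and the deformed Hermitian Yang--Mills result of Collins--Yau~\cite{CY} for the corresponding choices of $Z$.

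I expect two steps to carry the real difficulty. The first is the explicit curvature computation for the universal connection: establishing that $\mathbb{F}$ has the expected bidegree decomposition is what ties the abstract equivariant machinery to the concrete moment map, and is the analogue of the Chern--Weil representative computation flagged for manifolds. The second, and genuinely the crux, is the identification of the fibre-integrated degree-zero term with $\Ima(e^{-i\phi}\tilde Z(E,A))$: unlike the scalar-curvature case there is no a priori reason for auxiliary second-order (vertical-Laplacian type) contributions to vanish, so one must track these through the central-charge polynomial and verify that the skew-Hermitian projection isolates exactly the $Z$-critical operator. A simplification relative to the manifold case is that $\cA$ is a genuine (affine) manifold, so the singularity issues forcing the use of ``almost holomorphic coordinates'' for $\J^{\inte}(M,\omega)$ do not arise here.
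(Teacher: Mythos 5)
Your overall architecture coincides with the paper's: a universal bundle $\E \to \scA(E,h)\times X$ carrying a universal unitary connection whose restriction to each slice $\{A\}\times X$ is $A$, equivariant Chern--Weil representatives built from its curvature, fibre integration over $X$ splitting an equivariantly closed form into a two-form $\Omega_Z$ plus a function, and identification of that function with the $Z$-critical operator (with the finite-dimensional case run in parallel over $B\times X$, the $K$-action trivial on $X$). However, there is a genuine gap, and it sits exactly at the step you call the crux. You propose to represent each term $\alpha^j\cdot\ch_k(E)\cdot\Theta_{n-j-k}$ of the central charge by the equivariant representative of $\ch_k(E)$ itself, ``wedged with the appropriate powers of $\omega$ to land in the correct total degree.'' Since the charge's integrand already has degree $2n$ and you need degree $2n+2$, this forces one extra factor of $\omega$. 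That choice fails: $\omega$ (and $\theta$) are pulled back from $X$, on which the gauge group (resp.\ $K$) acts trivially, so their equivariant extensions carry no moment-map term; the function part of the fibre integral can therefore only come from the part of $\tr\bigl((F_{D_{\E}}+\langle\mu,e\rangle)^{k}\bigr)$ linear in $\langle\mu,e\rangle$, which is of the form $\tr\bigl(e\,F_A^{k-1}\bigr)$. The Hamiltonian you obtain is thus the critical operator of the \emph{shifted} charge with terms $\alpha^{j+1}\cdot\ch_{k-1}(E)\cdot\Theta_{n-j-k}$, not of $Z$; worse, for the $\ch_0$ terms of $Z$ (present, e.g., in the deformed Hermitian Yang--Mills charge) the equivariant representative is the constant $\rk E$ and no function is produced at all. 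The correct prescription raises the \emph{Chern character} index rather than the power of $\omega$: to the term $\alpha^j\cdot\ch_k(E)\cdot\Theta_{n-j-k}$ one attaches
\[
\omega^{j}\wedge \tr\left(\frac{1}{(k+1)!}\left(\frac{i}{2\pi}\bigl(F_{D_{\E}}+\langle \mu, e\rangle\bigr)\right)^{k+1}\right)\wedge\theta_{n-j-k},
\]
whose $\mu$-free part yields $\Omega_Z$ (through the two mixed components of $F_{D_{\E}}$) and whose linear-in-$\mu$ part is exactly $-\frac{i}{2\pi}\int_X\omega^j\wedge\tr\bigl(e\,\frac{1}{k!}\bigl(\frac{i}{2\pi}F_A\bigr)^k\bigr)\wedge\theta_{n-j-k}$, i.e.\ the pairing of $e$ against the corresponding term of $\tilde Z(E,A)$ in Definition \ref{def:z-connection}.

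Relatedly, the second difficulty you anticipate---``vertical-Laplacian type'' contributions that must be tracked and shown to organise themselves into $\tilde Z(E,A)$---does not exist in the bundle setting, and seeing why is the one computation your outline leaves out: the moment map for the gauge action on the universal bundle is the \emph{zeroth-order} operator $\langle\mu,e\rangle=-e$, i.e.\ $\iota_e F_{D_{\E}} = -D_{\End\E}(-e)$, which follows from the bidegree decomposition you correctly flag (horizontal part of $F_{D_{\E}}$ vanishes, vertical part is $F_A$, mixed part is tautological). In the manifold case the analogous moment map is the second-order operator $g^{-1}i\db\d h$, and it is integration by parts against it that creates the $\d^*\db^*$ terms in $\tilde Z(X,\omega)$; since here the moment map is order zero, no such terms can arise---this is precisely why Definition \ref{def:z-connection} contains no Laplacian-type corrections and why the bundle case is strictly easier than the manifold case, not comparably hard. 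A minor further point: your intermediate form $\eta(v)$ should be scalar-valued (it is a trace of powers of the equivariant curvature), not $\End E$-valued; endomorphism-valued objects appear only inside the trace when the Hamiltonian is identified.
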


We refer to Theorems \ref{connection-thm-infinite} and \ref{connection-thm-finite} for precise statements in infinite and finite dimensions respectively. In the infinite-dimensional setting, this gives a new geometric proof of \cite[Theorem 1.3]{DMS}, which in turn extended the moment map interpretation of the deformed Hermitian Yang--Mills operator due to Collins--Yau in rank one \cite[Section 2]{CY}. In particular, our work gives a geometric derivation of the deformed Hermitian Yang--Mills equation, starting from a central charge. We also note that our work produces a great many geometrically interesting equivariantly closed differential forms on the base of an equivariant family of vector bundles, such as on the space of unitary connections on a fixed Hermitian vector bundle.

Throughout our work, our moment map properties do not demand that the closed two-form involved in the definition is actually symplectic (i.e. nondegenerate or positive). In the vector bundle setting, positivity is understood through the  notion of a ``$Z$-subsolution'' \cite[Definition 2.33]{DMS}, which is an explicit equivalent condition to positivity of the relevant two-form (at a single point).   In turn, these subsolution conditions are related to algebro-geometric stability with respect to \emph{subvarieties} \cite[Theorems 4.3.13, 4.3.17]{JM}, in some sense explaining the lack of appearance of subsolutions in the Hermitian Yang--Mills setting (where saturated subsheaves are sufficient to test stability), and further imply that the $Z$-critical operator is elliptic \cite[Lemma 2.38]{DMS}. It is worth emphasising that the deformed Hermitian Yang--Mills theory has been very successful \emph{without} the two-form relevant to the moment map property being \emph{globally} positive, with subsolution conditions (providing positivity \emph{locally} near solutions) instead in some sense replacing this positivity in the theory.  We expect there to be an analogous subsolution theory that applies also to the manifold setting, and more generally.

\subsection*{Outlook} The shared structure between the manifold and the vector bundle settings is the use of \emph{families of objects}; either families of complex manifolds or families of holomorphic vector bundles. Given an equivariant family of objects, we construct moment maps once we have the extra structure of a metric, in either setting. The topological input is essentially a choice of a collection of Chern classes in both settings, while the differential-geometric input is essentially equivariant Chern--Weil theory. With these ingredients, the recipe to construct moment maps is almost identical in both settings.

We thus expect that this is the right general approach to constructing moment maps, in any geometric problem. In algebraic geometry, the point of stability conditions is to form moduli spaces parametrising stable objects, and in particular the usage of universal families is ubiquitous in moduli theory. Our approach can be thought of as a differential-geometric analogue of key aspects of that theory; we make great use of the differential geometry of \emph{families} of objects. We expect our strategy to further apply to many other settings, though leave this for future work.

We end with a more speculative point. The theory of Bridgeland stability conditions is axiomatic in nature: Bridgeland produces axioms that must be satisfied to define a stability condition on a triangulated category \cite{TB}. Thus to match that theory in differential geometry, one needs universal structures that apply to all geometric problems. This is loosely what is achieved by our approach: given a suitable notion of a family of objects (as is needed in the algebraic theory to produce moduli spaces of stable objects), and given a suitable notion of a metric, we expect our approach to be the one that leads to moment maps in a completely canonical way. We mention that this is related to goals of the (independent, work-in-progress) programme of Haiden--Katzarkov--Kontsevich--Pandit, which they title ``categorical K\"ahler geometry''; their (similar) goal is to produce axiomatic notions of metrics on objects in triangulated categories, in such a way that Bridgeland stability corresponds to existence of solutions to certain equations in the space of metrics attached to the object (defined axiomatically). What is provided here is a geometric recipe that in practice  produces such moment maps.

\subsection*{Outline} In Section \ref{sec2} we introduce the notion of a $Z$-critical K\"ahler metric and recall the notion of a $Z$-critical connection, each associated to a central charge in the relevant setting. We proceed in Section \ref{sec3} to recall the basic theory of equivariant differential forms, and equivariant Chern--Weil theory. Sections \ref{sec4} and \ref{sec:ac-structures} then prove our main moment map results in the context of complex manifolds, including Theorem \ref{introthm1} in the finite and infinite-dimensional settings, respectively. Section \ref{sec:Z-critical-connections} proves the analogous moment map results in the setting of holomorphic vector bundles, including Theorem  \ref{introthm2}.

\subsection*{Acknowledgements} The first author thanks Eveline Legendre and Alexia Corradini for explaining much of the theory of equivariant differential geometry to him and for many discussions that motivated the present work. The authors also thank Frances Kirwan, L\'aszl\'o Lempert, Johan Martens, John McCarthy,  Carlo Scarpa, Lars Sektnan and Jacopo Stoppa  for helpful conversations and for their interest in this work, and thank the referees for their suggestions. RD was funded by a Royal Society University Research Fellowship for the duration of this work.

\section{Geometric PDEs in complex geometry}\label{sec2}

\subsection{$Z$-critical K\"ahler metrics} Let $X$ be a compact complex manifold of dimension $n$ and let $\alpha \in H^{1,1}(X,\R)$ be a K\"ahler class; the algebro-geometrically inclined reader should consider the special case $\alpha = c_1(L)$ for an ample line bundle $L$, in which case $X$ is a smooth projective variety. To a K\"ahler metric $\omega \in \alpha$ we associate various curvature quantities, which are mostly defined in the following manner. A K\"ahler metric is in particular a Hermitian metric on the holomorphic tangent bundle $TX^{1,0}$, and so to $\omega$ we associate the curvature of the Chern connection on $TX^{1,0}$ induced by $\omega$; we denote the curvature of this connection by $R \in \scA^{1,1}(\End TX^{1,0}).$

One associates to a connection  \emph{Chern--Weil representatives} of the Chern characters, defined by $$\widetilde \ch_k(X,\omega)  :=  \tr\left(\frac{1}{k!}\left(\frac{i}{2\pi}R\right)^k\right).$$ Thus the $\widetilde \ch_k(X,\omega) \in \ch_k(X)$ represent the Chern characters of $X$. We also use the notation $$\Ric\omega = \widetilde \ch_1(X,\omega)$$  for the representative of the first Chern character, as it coincides with the \emph{Ricci curvature} of $\omega$.  As the first Chern character agrees with the first Chern class, we often use $c_1(X)$ rather than the equivalent $\ch_1(X)$.

\begin{definition}\
A K\"ahler metric $\omega\in\alpha$ is said to be a \emph{constant scalar curvature K\"ahler metric} if its \emph{scalar curvature} $$S(\omega) := \Lambda_{\omega} \Ric \omega$$ is constant.
\end{definition}

If a K\"ahler metric $\omega\in\alpha$ has constant scalar curvature, a simple integration argument using that $\Ric\omega \in c_1(X)$ implies that the resulting constant $\hat S$ must satisfy $$\hat S = \frac{n\int_Xc_1(X)\cdot \alpha^{n-1}}{\int_X\alpha^n},$$ where we can view the numerator and denominator variously as integrals of closed differential forms, cup products in cohomology or in the projective case as intersection numbers. We view this equation as being associated with the numbers $\int_Xc_1(X)\cdot \alpha^{n-1}$ and $\int_X\alpha^n$ and seek to understand the appropriate equation for other topological choices. To do so, we begin with the topological choice itself, through the notion of a central charge.

\begin{definition}\label{def:central-charge-manifolds}A \emph{central charge} associates to each pair \((X,\alpha)\) a sum $$Z(X,\alpha) = \sum_{j,k} a_{jk}\int_X\alpha^{j}\cdot \ch_{k_1}(X)\cdot\ldots\cdot \ch_{k_r}(X)\in \C,$$ where: 
\begin{enumerate}[label={(\roman*)}]
\item \(k=(k_1,\ldots,k_r)\) is a multi-index of arbitrary length $r \leq n$;
\item $a_{jk} \in \C$ are a collection of complex numbers;
\item the integrand is of total degree $2n$, so $j+k_1+\ldots+k_r = n$;
\item the \emph{phase} $\phi(X,\alpha) := \arg Z(X,\alpha) \in (-\pi,\pi]$ is well-defined, so $Z(X,\alpha) \neq 0$.
\end{enumerate}
\end{definition}

We view this as the most general topological input that should determine both a stability condition for $(X,\alpha)$ and a geometric PDE; we will not discuss stability  in the current work, but rather exclusively the analytic aspects. We refer to \cite[Sections 2.1, 4.1]{stabilityconditions} for a discussion of associated stability conditions, and \cite[Section 2.3]{git-stabilityconditions} for an axiomatic, stacky approach. Here, it is sufficient to consider a central charge as simply a choice of topological input.

\begin{example}An interesting choice of central charge takes the form$$Z(X,\alpha) = \int_X e^{-i\alpha}\cdot \ch(X),$$ with $\ch(X)=1+\ch_1(X)+\ldots+\ch_n(X)$ the total Chern character of $X$; this is analogous to the most standard central charge on the category of coherent sheaves mentioned in Example \ref{string-theory-central-charge} below, which arises naturally in mirror symmetry.
\end{example}

We associate to each term in the central charge an analytic counterpart which is a sum of \emph{two} terms; the first uses traditional Chern--Weil theory, while the second is more involved and uses a sort-of linearised version of Chern--Weil theory. Consider again the curvature $R \in \scA^{1,1}(\End TX^{1,0})$ associated to the K\"ahler metric $\omega$ on $X$. 

The process will be linear in the terms comprising the central charge, and so we first consider a single term taking the form $ \alpha^{j}\cdot \ch_{k_1}(X)\cdot\ldots\cdot \ch_{k_r}(X)$. To this we firstly associate the complex-valued function \begin{equation}\label{eq:first-term}
\frac{\omega^j \wedge \widetilde \ch_{k_1}(X,\omega)\wedge\ldots \wedge \widetilde{\ch}_{k_r}(X,\omega)}{\omega^n},
\end{equation} namely by taking the natural Chern--Weil representatives. To define the second function, we firstly denote for \(1\leq m \leq r\), $$\tilde \ell_m(X,\omega) := \frac{1}{j+1}\frac{\omega^{j+1}\wedge \widetilde \ch_{k_1}(X, \omega)\wedge\ldots\wedge \frac{1}{(k_m-1)!}\left(\frac{i}{2\pi}R\right)^{k_m-1} \wedge \ldots\wedge\widetilde \ch_{k_r}(X,\omega)}{\omega^n},$$ which is a (complex) section of $\End TX^{1,0}$ produced by replacing \(\widetilde{\ch}_{k_m}(X, \omega)\) with the \(\End TX^{1,0}\)-valued form \(\frac{1}{(k_m-1)!}(\frac{i}{2\pi}R)^{k_m-1}\) whose trace is \(\widetilde{\ch}_{k_m-1}(X, \omega)\), and including one extra copy of $\omega$. For a general element $$A \in \scA^0(\End TX^{1,0}) \cong \scA^0(TX^{1,0}\otimes T^*X^{1,0}),$$ we denote by $A^{\flat}$ the associated section of $T^*X^{0,1} \otimes T^*X^{1,0}$ defined via the Hermitian metric induced by $\omega$. The second function is then \begin{equation}\label{eq:second-term}
\sum_{m=1}^r \frac{-1}{2\pi}\d^*\db^* \left(\tilde \ell_m(X,\omega)^{\flat}\right),
\end{equation} which is again a complex-valued function. Summing the two terms \eqref{eq:first-term} and \eqref{eq:second-term} produces a function $\tilde f(\alpha^{j}\cdot \ch_{k_1}(X)\cdot\ldots\cdot \ch_{k_r}(X); \omega)$ which satisfies $$\int_X \tilde f(\alpha^{j}\cdot \ch_{k_1}(X)\cdot\ldots\cdot \ch_{k_r}(X); \omega) \omega^n =  \alpha^{j}\cdot \ch_{k_1}(X)\cdot\ldots\cdot \ch_{k_r}(X),$$ since the integral of each term involving an adjoint term vanishes. Adding each such term produces a function \begin{equation}\label{eq:Z-tilde-def}
\tilde Z(X,\omega) = \sum_{j,k} a_{jk} \tilde{f}(\alpha^{j}\cdot \ch_{k_1}(X)\cdot\ldots\cdot \ch_{k_r}(X); \omega) \in C^{\infty}(X,\C)
\end{equation} which satisfies $$\int_X \tilde Z(X,\omega) \omega^n = Z(X,\alpha).$$

\begin{definition}\label{defpde} We say that $\omega$ is a \emph{$Z$-critical K\"ahler metric} if $$\Ima(e^{-i\phi(X,\alpha)} \tilde Z(X,\omega)) = 0.$$
\end{definition}

We view this as a PDE on the space of K\"ahler metrics $\omega \in \alpha$. Varying the K\"ahler metric $\omega \to \omega+i\ddbar \psi,$ the equation becomes a sixth-order PDE in $\psi$ in general. 

\begin{remark} If one uses Chern \emph{classes} rather than Chern \emph{characters}, one can similarly express the resulting geometric PDE, which, although equivalent, is notationally more cumbersome. Details are provided in Remark \ref{rmk:c-vs-ch}.\end{remark}

\begin{example} Suppose only powers of $c_1(X)$ are involved. Then this recipe associates to a term  of the form $ \alpha^{j}\cdot c_{1}(X)^{n-j}$ the function \[\frac{\omega^j\wedge(\Ric\omega)^{n-j}}{\omega^n} - \frac{n-j}{j+1}\frac{1}{2\pi}\d^*\db^*\left(\frac{\omega^{j+1}\wedge(\Ric\omega)^{n-j-1}}{\omega^n}(\Id_{TX^{1,0}})^\flat\right). \]To compute the right-hand term, note that the flattened identity morphism is the metric \(g\). The antisymmetrisation map \(T^*X^{0,1}\otimes T^*X^{1,0}\to T^*X^{0,1}\wedge T^*X^{1,0}\) is an isomorphism compatible with the del-bar operators \(\db:\scA^0(T^*X^{1,0})\to \scA^{0,1}( T^*X^{1,0})\) and \(\db:\Omega^{1,0}(X)\to\Omega^{1,1}(X)\), and the image of \(g\) under antisymmetrisation is \(i\omega\). By the K{\"a}hler identities, \([\db^*, \wedge\omega] = i\d\), hence the expression above becomes \begin{align*}
& \frac{\omega^j\wedge\Ric\omega^{n-j}}{\omega^n} + \frac{1}{2\pi}\frac{n-j}{j+1}\d^*\d\left( \frac{\omega^{j+1}\wedge\Ric\omega^{n-j-1}}{\omega^n}\right) \\
=&\frac{\omega^j\wedge\Ric\omega^{n-j}}{\omega^n} - \frac{1}{2\pi}\frac{n-j}{j+1}\Delta\left( \frac{\omega^{j+1}\wedge\Ric\omega^{n-j-1}}{\omega^n}\right),
\end{align*}where $\Delta=-\d^*\d$ is the K{\"a}hler Laplacian defined using $\omega$. The resulting PDE was introduced in \cite[Section 2.1]{stabilityconditions} in this special case where only the first Chern character is involved\footnote{The corresponding equation given in \cite[Section 2.1]{stabilityconditions} is erroneously missing a factor of $1/2\pi$, ultimately due to a mistake on \cite[p. 21]{stabilityconditions}, where the incorrect expression $\Ric(\omega+\ddb \phi) - \Ric(\omega) = -\ddb \log{\frac{(\omega+\ddb \phi)^n}{\omega^n}}$ is used.}; the equation is new to the present work once higher Chern characters are included.

 Specialising further, taking $Z(X,\alpha) =  i\int_X \alpha^n -  \int_X c_1(X)\cdot \alpha^{n-1}$ produces the cscK equation (which is \emph{fourth}-order, as the Laplacian term vanishes).  We expect our work to also encompass fourth-order PDEs considered by Futaki and Leung defined using Chern--Weil theory \cite{futaki-perturbed, leung} (see also Bando \cite{bando}), related to terms such as $\int_X c_j(X)\cdot \alpha^{n-j}$, with $c_j(X)$ the $j$\textsuperscript{th}-Chern class.

 \end{example}

\subsection{$Z$-critical connections} Our recipe is similar, but easier in the setting of a holomorphic vector bundle, where we produce $Z$-\emph{critical connections}. Here the theory originates in \cite{DMS}, building on the important special case of \emph{deformed Hermitian Yang--Mills connections} \cite{LYZ,MMMS,CYarXiv}. We consider a holomorphic vector bundle $E$ over a compact K\"ahler manifold $(X,\omega)$ of dimension $n$ with $\omega \in \alpha$; a Hermitian metric $h$ on $E$ then determines a Chern connection $A$ with curvature $F_A \in \scA^{1,1}(\End E).$

\begin{definition} The connection $A$ induced by a Hermitian metric $h$ is a \emph{Hermitian Yang--Mills connection} if $$\frac{i}{2\pi}\Lambda_{\omega}F_A = \lambda \Id_E.$$

\end{definition}

The constant $\lambda$ is topological, given by $\lambda = n\frac{\deg E}{\rk E\int_X\alpha^n},$ where $\deg E := \int_X c_1(E)\cdot \alpha^{n-1}$. We again view the Hermitian Yang--Mills condition as being ``induced'' by $\deg E$ and $\rk E$ and ask for a more general geometric PDE associated to an arbitrary topological choice. Once more, the first step is to define the topological input:

\begin{definition}A \emph{central charge} is a group homomorphism $Z: K(X) \to \C$ from the Grothendieck group of $X$ to the complex numbers, taking the form $$Z(E) = \sum_{j=0}^n \int_X \rho_j\alpha^j\cdot\ch(E)\cdot \Theta,$$ where:
\begin{enumerate}[label={(\roman*)}]
\item $\ch(E)$ is the total Chern character of $E$;
\item $\rho_j \in \C$ are complex numbers;
\item $\Theta \in \bigoplus_k H^{k,k}(X,\C)$ is an auxiliary cohomology class, which may equal 1;
\item only the $2n$-degree component of the integrand is considered in the integral;
\item the \emph{phase} $\phi(E) := \arg Z(E) \in (-\pi,\pi]$ is well-defined, so $Z(E) \neq 0$.
\end{enumerate}
\end{definition}

\begin{example}\label{string-theory-central-charge} Two central charges arising from string theory and mirror symmetry are the standard examples: the first takes the form $$Z(E) = \int_X e^{-i\alpha}\cdot e^{-\beta}\cdot \ch(E),$$ with $\beta \in H^{1,1}(X,\R)$ a ``$B$-field'', while the second takes the form $$Z(E) =  \int_X e^{-i\alpha}\cdot e^{-\beta}\cdot \ch(E)\cdot \sqrt{\Td(X),}$$ where $\Td(X)$ denotes the Todd class of $X$. In these cases, the auxiliary class $\Theta$ is taken to induce the appearance of the $B$-field and of the square root of the Todd class. 

 \end{example}

We next use our connection and our K\"ahler metric to associate a geometric PDE to a central charge, through Chern--Weil representatives. For this we also fix a representative $\theta \in \Theta$, so that $\theta$ is a direct sum of closed differential forms (possibly of varying degrees) representing the closed cohomology class $\Theta$. 

The central charge is a linear combination of terms of the form $\int_X \alpha^{j}\cdot \ch_{k}(E)\cdot \Theta_{n-j-k},$ where $\Theta_{n-j-k} \in H^{n-j-k,n-j-k}(X,\C)$ is of degree $2(n-j-k)$ so that the integrand is of total degree $2n$. We associate to this term the $\End(E)$-valued complex $(n,n)$-form $$\omega^j\wedge\left(\frac{1}{k!}\left(\frac{i}{2\pi}F_A\right)^k\right) \wedge\theta_{n-j-k} \in \scA_{\C}^{n,n}(\End E),$$ and summing up produces an $\End E$-valued complex $(n,n)$-form  $$\tilde Z(E,A)\in\scA_{\C}^{n,n}(\End E)$$ defined in such a way that $$\int_X \tr \left(\tilde Z(E,A)\right) = Z(E).$$

\begin{definition}\cite[Definition 2.20]{DMS}\label{def:z-connection} We say that $A$ is a $Z$-\emph{critical connection} if $$\Ima(e^{-i\phi(E)}\tilde Z(E,A)) = 0,$$ where $\Ima$ denotes (\(-i\) times) the skew-Hermitian component with respect to the Hermitian metric $h$ on $E$.
\end{definition}

We view this as a geometric PDE on a connection on the holomorphic vector bundle $E$, and note that it is nonlinear in the \emph{curvature} of the connection.

\begin{example} \emph{Deformed Hermitian Yang--Mills connections} are precisely  $Z$-critical connections with  $Z(E) = \int_X e^{-i\alpha}\cdot\ch(E)$; more precisely, the resulting equation is \cite[Example 2.25]{DMS} $$\Ima\left(e^{-i\phi(E)} \left (\omega\otimes \Id_{\End E} - \frac{F_A}{2\pi}\right)^n\right) = 0.$$ This equation arose simultaneously in the theoretical physics and mirror symmetry literature in the special case when $E$ is a line bundle \cite{MMMS, LYZ}, and was first suggested by Collins--Yau in higher rank \cite[Section 8.1]{CYarXiv}. For other central charges, the equation is similarly explicit.

\end{example}

\section{Equivariant differential geometry}\label{sec3}
\subsection{Equivariant differential forms}

We recall some basic aspects of the theory of equivariant differential geometry. Let $M$ be a  manifold and let $K$ be a compact Lie group acting on $M$. We let $\mfk$ denote the Lie algebra of $K$, and often use the same notation $v \in \mfk$ for an element of this Lie algebra and its associated vector field on $M$ produced by the $K$-action. We also denote by $$\Omega^*(M) = \bigoplus_{k \geq 0} \Omega^{k}(M)$$ the direct sum over $k$ of the space of $k$-forms on $M$. 

\begin{definition} An \emph{equivariant differential form} is a smooth $K$-equivariant polynomial map $$\alpha: \mfk \to \Omega^*(M),$$ where $K$ acts on $\mfk$ by the adjoint action and on $\Omega^*(M)$ by pullback of differential forms.\end{definition}

The key point of the definition is that there is a natural differential on equivariant differential forms.

\begin{definition}We define the \emph{equivariant differential} of an equivariant differential form $\alpha$ by $$(d_{\eq}\alpha)(v) = d(\alpha(v)) + \iota_v (\alpha(v)).$$We say that   an equivariant differential form  $\alpha$ is \emph{equivariantly closed} if $d_{\eq}\alpha=0$. \end{definition}

Suppose $(M,\omega)$ is a symplectic manifold with a $K$-action preserving $\omega$. Recall that a smooth map $\mu: M \to \mfk^*$ is a \emph{moment map} with respect to $\omega$ if it is $K$-equivariant (with $\mfk^*$ given the coadjoint action) and for all $v \in \mfk$ $$d\langle \mu, v \rangle = -\iota_v \omega,$$ where \(\langle-,-\rangle\) denotes the natural pairing between \(\mathfrak{k}^*\) and \(\mathfrak{k}\). It follows from this definition that the equivariant differential form $$v \mapsto \omega + \langle \mu, v\rangle$$ is equivariantly closed, and in fact this condition is \emph{equivalent} to the moment map condition. We  extend this definition slightly to drop positivity of $\omega$.

\begin{definition} Suppose $M$ is a  manifold with a $K$-action and $\omega$ is a closed $K$-invariant two-form. We call a smooth map $\mu: M \to \mfk^*$ a \emph{moment map} with respect to $\omega$ if it is $K$-equivariant and for all $v \in \mfk$ $$d\langle \mu, v \rangle = -\iota_v \omega.$$
 \end{definition}

We will use two basic properties of equivariantly closed differential forms. The first is analogous to the statement that the wedge product of closed forms is closed, and follows from a straightforward computation.

\begin{lemma}\label{lemma:wedge} If $\alpha$ and $\beta$ are two equivariantly closed differential forms, their wedge product defined by $$(\alpha\wedge\beta)(v) = \alpha(v) \wedge \beta(v)$$ is equivariantly closed.\end{lemma}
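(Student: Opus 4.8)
The plan is to verify the claim by a direct computation using the definition of the equivariant differential together with the Leibniz rule for the interior product and the ordinary exterior derivative. Concretely, I fix $v \in \mfk$ and compute $d_{\eq}(\alpha\wedge\beta)(v)$ by expanding the definition $d_{\eq}(\alpha\wedge\beta)(v) = d\big((\alpha\wedge\beta)(v)\big) + \iota_v\big((\alpha\wedge\beta)(v)\big)$, and I unwind $(\alpha\wedge\beta)(v) = \alpha(v)\wedge\beta(v)$ to reduce everything to operations on the genuine differential forms $\alpha(v)$ and $\beta(v)$ on $M$.

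First I would apply the graded Leibniz rule for $d$ to get $d(\alpha(v)\wedge\beta(v)) = d(\alpha(v))\wedge\beta(v) + (-1)^{|\alpha(v)|}\alpha(v)\wedge d(\beta(v))$, and similarly the graded Leibniz rule for the degree $-1$ derivation $\iota_v$ to get $\iota_v(\alpha(v)\wedge\beta(v)) = \iota_v(\alpha(v))\wedge\beta(v) + (-1)^{|\alpha(v)|}\alpha(v)\wedge\iota_v(\beta(v))$. Adding these and regrouping the terms with a common sign factor, the total equals $\big(d(\alpha(v)) + \iota_v(\alpha(v))\big)\wedge\beta(v) + (-1)^{|\alpha(v)|}\alpha(v)\wedge\big(d(\beta(v)) + \iota_v(\beta(v))\big)$, which is exactly $(d_{\eq}\alpha)(v)\wedge\beta(v) + (-1)^{|\alpha(v)|}\alpha(v)\wedge(d_{\eq}\beta)(v)$. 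Since $\alpha$ and $\beta$ are equivariantly closed, both bracketed terms vanish, so $d_{\eq}(\alpha\wedge\beta)(v) = 0$ for every $v$, giving the result.

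The one point requiring slight care is the bookkeeping of degrees: an equivariant form is not homogeneous as a map $\mfk \to \Omega^*(M)$ in the naive sense, so the sign $(-1)^{|\alpha(v)|}$ should be read componentwise, applying the argument degree by degree and using that $d$ and $\iota_v$ act within the appropriate graded pieces. Provided one also checks that $\alpha\wedge\beta$ is itself a $K$-equivariant polynomial map (which follows because the product of two equivariant polynomial maps is again equivariant and polynomial, the wedge product being $K$-equivariant under pullback), the computation goes through without obstruction. I expect no genuine difficulty here: the main content is simply that $d_{\eq}$ is a derivation with respect to $\wedge$, exactly as the ordinary $d$ is, and this is precisely the graded Leibniz identity assembled from the Leibniz rules for $d$ and $\iota_v$ separately.
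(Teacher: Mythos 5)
Your proof is correct and is precisely the ``straightforward computation'' that the paper alludes to without writing out: the paper offers no details, and your assembly of the graded Leibniz rules for $d$ and $\iota_v$ into a Leibniz rule for $d_{\eq}$ is the intended argument. Your care about reading the sign $(-1)^{|\alpha(v)|}$ componentwise is exactly the right resolution of the only subtle point, since for each fixed homogeneous component $\alpha_k$ of $\alpha(v)$ the full sum $\sum_l\bigl(d\beta_l + \iota_v\beta_l\bigr) = (d_{\eq}\beta)(v)$ vanishes, so the mixed-degree bookkeeping causes no trouble.
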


The second is analogous to fibre integrals of closed forms being closed.
  
\begin{lemma}\label{lemma:fibre} If $\pi: M \to B$ is a proper submersion between manifolds with $K$-actions making $\pi$ a $K$-equivariant map, then the fibre integral of an equivariantly closed differential form $\alpha$ on $M$, defined by $$\left( \int_{M/B}\alpha\right)(v) = \int_{M/B} \alpha(v),$$ is an equivariantly closed form on $B$.
\end{lemma}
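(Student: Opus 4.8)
The plan is to reduce the equivariant closedness of $\int_{M/B}\alpha$ to two commutation properties of fibre integration, which together let the hypothesis $d_{\eq}\alpha = 0$ on $M$ descend to $B$. Writing $v$ both for an element of $\mfk$ and for the vector fields it generates on $M$ and on $B$ (these are $\pi$-related precisely because $\pi$ is $K$-equivariant), I must verify that for each $v \in \mfk$,
$$d\left(\int_{M/B}\alpha(v)\right) + \iota_v\left(\int_{M/B}\alpha(v)\right) = 0.$$
The first property I would invoke is that fibre integration over the compact fibres of the proper submersion $\pi$ commutes with the exterior derivative, $d\circ\int_{M/B} = \int_{M/B}\circ\, d$; this is the standard fact that integration along the fibre commutes with $d$ when the fibres are closed manifolds (Stokes' theorem with no boundary contribution), and I would cite it. The second property is the commutation of fibre integration with contraction by $\pi$-related vector fields,
$$\iota_v \int_{M/B}\beta = \int_{M/B}\iota_v\beta,$$
which holds because the vector field $v$ on $M$ projects to the vector field $v$ on $B$ under $d\pi$.

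Granting these two properties, the proof concludes in one line: applying both commutations,
$$d\left(\int_{M/B}\alpha(v)\right) + \iota_v\left(\int_{M/B}\alpha(v)\right) = \int_{M/B}\left(d(\alpha(v)) + \iota_v(\alpha(v))\right) = \int_{M/B}(d_{\eq}\alpha)(v) = 0,$$
since $\alpha$ is equivariantly closed. I would also record that $\int_{M/B}\alpha$ is a genuine equivariant differential form on $B$: it is polynomial in $v$ because fibre integration is linear and is applied for each fixed $v$, and it is $K$-equivariant because the $K$-action commutes with fibre integration over the $K$-invariant fibres of the equivariant map $\pi$.

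The main obstacle is establishing the contraction property, the commutation with $d$ being entirely standard. I would prove it by working in a local trivialisation $M|_U \cong U \times F$ of the submersion, with coordinates $x^i$ on $U \subseteq B$ and $y^a$ on the fibre $F$. Since the lift of $v$ is $\pi$-related to $v$ on $B$, it decomposes as $v = v^H + v^V$, where the horizontal part $v^H$ has $\partial_{x^i}$-coefficients depending only on $x$ (equal to those of $v$ on $B$) and $v^V$ is vertical. Fibre integration extracts the coefficient of the top fibre-form $dy^1\wedge\cdots\wedge dy^f$ and integrates it over $F$. For the vertical part, $\iota_{v^V}$ lowers the fibre-form degree by one, so $\iota_{v^V}\beta$ can never contain a full $dy^1\wedge\cdots\wedge dy^f$ term and hence $\int_{M/B}\iota_{v^V}\beta = 0$; for the horizontal part, since its coefficients are pulled back from $B$, contraction passes through the fibre integral directly to give $\int_{M/B}\iota_{v^H}\beta = \iota_v\int_{M/B}\beta$. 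The only real care needed is the bookkeeping of signs coming from the antiderivation property of $\iota$, which is routine. Combining the two parts yields the contraction property and completes the proof.
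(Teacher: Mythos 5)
Your proof is correct and is exactly the ``straightforward calculation'' the paper alludes to (it gives no details beyond noting that $\pi \circ k = k \circ \pi$ is used): you commute fibre integration with $d$ (standard, since the fibres are closed) and with contraction by $\pi$-related vector fields (via the horizontal/vertical split, where the vertical part integrates to zero), and then let $d_{\eq}\alpha = 0$ descend. The only quibble is the phrase ``$K$-invariant fibres''---the fibres are permuted, not preserved, by $K$; what equivariance of $\pi$ actually gives is that each $k \in K$ maps fibres diffeomorphically to fibres, so $k^*$ commutes with $\int_{M/B}$.
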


The proof is also a straightforward calculation using that $\pi \circ k = k \circ \pi$ for $k \in K$.

\subsection{Equivariant Chern--Weil theory}\label{sec:equiv-cw}
Suppose now that $E$ is a complex vector bundle of rank $r$ over a manifold $M$, and let $K$ be a compact Lie group acting on $M$ and lifting to $E$. Associated to a connection $A$ on $E$ are Chern--Weil representatives; we now explain the equivariant analogue of Chern--Weil theory. Our main reference is Berline--Getzler--Vergne \cite[Chapter 7]{BGV}. 

We assume that $A$ is $K$-invariant in the sense that for all $k \in K$ $$D_E \circ k^* = k^* \circ D_E.$$ Denote by $F_{A} \in \scA^2(\End E)$ the curvature of the connection $A$ on $E$, which is then $K$-invariant, and let $D_{\End E}: \scA^0(\End E) \to \scA^1(\End E)$ be the induced covariant derivative on $\End E$. 

\begin{definition}\cite[Proposition 7.4]{BGV}\label{def:vb_moment_map}
Let $A$ be a $K$-invariant connection. We say that a $K$-equivariant (smooth) section $\mu$ of $\End E \otimes \mfk^*$, thus associating to each $v \in \mfk$ a section $\langle \mu, v\rangle \in \scA^0(\End E)$,   is a \emph{moment map} for the $K$-action on $E$ if for all $v\in \mfk$ we have $$\iota_vF_A = -D_{\End E}\langle \mu, v\rangle.$$
\end{definition} 

Thus the moment map condition is an equality of elements of $\scA^1(\End E)$. We again emphasise that we do not require positivity (of any kind on $F_A$) in our definition of a moment map  in this higher rank case. 

\begin{definition} Let $\mu \in \scA^0(\End E \otimes \mfk^*)$ be a moment map for the $K$-action on $E$. The \emph{equivariant Chern--Weil representatives} of the \emph{equivariant Chern characters} of $E$ with respect to  $A$ are the equivariant differential forms $\widetilde \ch_{k,\eq}(E,A) \in  \ch_{k,\eq}(E)$ defined, for each $v \in \mfk$, by $$\widetilde \ch_{k,\eq}(E,A)(v) = \tr\left(\frac{1}{k!}\left(\frac{i}{2\pi}\left( F_A + \langle\mu,v\rangle\right)\right)^k\right).$$ \end{definition}

Implicit in this is that the equivariant Chern--Weil representatives thus defined are equivariantly closed \cite[p. 211]{BGV}.

Much as is well-known for line bundles, in the vector bundle setting moment maps can be described explicitly via the infinitesimal action and the connection. For each $v\in \mfk$, let $\L^E_v: \scA^0(E) \to \scA^0(E)$ the infinitesimal action on sections of $E$. Then a moment map for the $K$-action on $E$ is given by \cite[Definition 7.5]{BGV} \begin{equation}\label{moment-map-formula-explicit}\langle\mu, v\rangle = D_v - \L^E_v .\end{equation} Note that neither of $\L^E_v$ nor $D_v$ are individually sections of $\End E$.

\section{$Z$-critical K\"ahler metrics as moment maps}\label{sec4}

Here we consider \(Z\)-critical K{\"a}hler metrics in the finite-dimensional setting. We first compute a moment map for the full curvature tensor of a K{\"a}hler manifold, then use this information to produce equivariant representatives for the equivariant Chern characters of the vertical tangent bundle of a holomorphic submersion. Applying the theory of equivariant differential forms, we prove the finite-dimensional case of Theorem \ref{introthm1}.

\subsection{Chern--Weil theory for the tangent bundle} \label{sec:chern-weil-tangent} 

Consider a K\"ahler manifold $(X,\omega)$, with $X$ not necessarily compact. We assume there is an action of a compact Lie group $K$ on $(X, \omega)$ by Hamiltonian isometries. The K\"ahler metric $\omega$ induces a Hermitian metric \(g\) on the holomorphic tangent bundle $TX^{1,0}$, which in turn induces a (Chern) connection $D$ on $TX^{1,0}$ with curvature $R \in \scA^{1,1}(\End TX^{1,0})$. The connection on $TX^{1,0}$ is $K$-invariant, and our aim is to calculate the resulting equivariant Chern--Weil representatives of the Chern characters of $X$.  To phrase what these Chern--Weil representatives are, we require some further notation. 

\begin{definition}\label{def:tangent-bundle-moment-map}
For a smooth function $h$, we denote by  $$g^{-1}(i\db\d h) \in \scA^{0}(\End TX^{1,0})$$ for the endomorphism of $TX^{1,0}$ obtained by using the Hermitian metric $g$ to raise  the \((0,1)\)-form part of $i\db(\d h)\in \scA^{0,1}(T^*X^{1,0})$ to a \((1,0)\)-vector field. 
\end{definition}

In a little more detail, since $\d h$ is a section of the holomorphic vector bundle $T^*X^{1,0}$, it follows that \(i\db\d h\) is a section of $T^*X^{1,0} \otimes T^*X^{0,1}$, while the Hermitian metric $g$ on  \(TX^{1,0}\) defines an isomorphism $T^*X^{0,1} \cong TX^{1,0}$, thus inducing an isomorphism $$T^*X^{1,0} \otimes T^*X^{0,1} \cong T^*X^{1,0} \otimes  TX^{1,0} \cong \End TX^{1,0},$$ and the image of   $i\db\d h$ under these identifications is what we denote $g^{-1}(i\db\d h).$ In local coordinates, $g^{-1}(i\db\d h)$ may be written \(ig^{\alpha\bar{\beta}}\d_{\bar{\beta}}\d_{\gamma} h\) and  is  skew-Hermitian.

\begin{remark}\label{rem:db-convention}
	We emphasise that the $\db$-operator in Definition \ref{def:tangent-bundle-moment-map} is that of the holomorphic vector bundle $T^*X^{1,0}$, and not the $\db$-operator of differential forms. The operator $g^{-1}$ will always be used with this convention, throughout this work. In particular, we will see in the following proof the emergence of $i\d\db h$, where $\d$ is the $\d$-operator of the anti-holomorphic vector bundle $T^*X^{0,1}$. Unlike the corresponding operators on forms, these operators satisfy $i\d\db h = i\db\d h$ under the identification $T^*X^{1,0}\otimes T^*X^{0,1}\cong T^*X^{0,1}\otimes T^*X^{1,0}$.  
\end{remark}

		\begin{proposition}\label{equivariant-chern-weil} Let $\mu: X \to \mfk^*$ be a moment map for the $K$-action on $(X,\omega)$. Then a moment map \(\sigma\) for the $K$-action on $TX^{1,0}$ is given by $$\langle \sigma, v\rangle = g^{-1}\left (i\bar\partial\partial(\langle \mu, v\rangle)\right),$$ where \(v\in\mathfrak{k}\).

		\end{proposition}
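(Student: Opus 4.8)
The plan is to recognise the claimed section as the canonical \emph{bundle moment} of the Chern connection and then to verify the identity of Definition \ref{def:vb_moment_map} by a short computation in holomorphic coordinates. Write $h := \langle \mu, v\rangle$. Recall that for a $K$-equivariant bundle with invariant connection, the moment of \cite{BGV} is the zeroth-order operator $\mathcal{L}_v - \nabla_v \in \scA^0(\End TX^{1,0})$, where $\mathcal{L}_v$ is the infinitesimal action of $v$ on sections and $\nabla_v$ is the covariant derivative along the vector field $v$; this operator satisfies the defining identity $\iota_v F_A = -D_{\End TX^{1,0}}(\mathcal{L}_v - \nabla_v)$ up to the sign fixed by conventions. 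It therefore suffices to identify $\mathcal{L}_v - \nabla_v$ with $g^{-1}i\db\d h$.

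To compute this moment in coordinates, I note that since $K$ acts by biholomorphisms the infinitesimal action on $TX^{1,0}$ is the Lie derivative $\mathcal{L}_v s = [v,s]$, which preserves type $(1,0)$ because $v$ preserves the complex structure. Writing $v = v^{1,0} + v^{0,1}$ with $v^{1,0} = v^\gamma \d_\gamma$ holomorphic, and using that the Chern connection of $TX^{1,0}$ has only holomorphic Christoffel symbols $\Gamma^\gamma_{\alpha\delta} = g^{\gamma\bar\beta}\d_\alpha g_{\delta\bar\beta}$, a direct evaluation on the frame $\{\d_\delta\}$ gives $(\mathcal{L}_v - \nabla_v)\d_\delta = -(\d_\delta v^\gamma + \Gamma^\gamma_{\alpha\delta}v^\alpha)\d_\gamma = -(\nabla_\delta v^\gamma)\d_\gamma$, where the $v^{0,1}$-contributions and the derivatives of the constant components of $\d_\delta$ drop out. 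In other words, the bundle moment is $-\nabla v^{1,0}$, the covariant derivative of the holomorphic part of the Killing field.

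To finish, I insert the Hamiltonian relation. The moment map equation $dh = -\iota_v\omega$ reads in coordinates $v^\gamma = i g^{\gamma\bar\beta}\d_{\bar\beta}h$. Substituting and using metric compatibility of the Chern connection in the form $\d_\delta g^{\gamma\bar\beta} = -\Gamma^\gamma_{\delta\alpha}g^{\alpha\bar\beta}$, the Christoffel contributions cancel and one is left with $\nabla_\delta v^\gamma = i g^{\gamma\bar\beta}\d_\delta\d_{\bar\beta}h$, which is precisely the coordinate expression for $g^{-1}i\db\d h$ recorded before the statement. Hence $\mathcal{L}_v - \nabla_v = \mp g^{-1}i\db\d h$, so that $\langle\sigma,v\rangle := g^{-1}i\db\d h$ satisfies the defining identity of Definition \ref{def:vb_moment_map}. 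The $K$-equivariance of $\sigma$ follows from that of $\mu$ together with the naturality of $g^{-1}i\db\d(-)$ under the holomorphic isometric action, and the skew-adjointness emphasised in the statement is immediate from Hermiticity of the complex Hessian $\d_\gamma\d_{\bar\beta}h$ combined with the factor $i$.

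The only genuine computation is the Christoffel cancellation in the third step, namely the identity $\nabla_\delta(i g^{\gamma\bar\beta}\d_{\bar\beta}h) = i g^{\gamma\bar\beta}\d_\delta\d_{\bar\beta}h$, asserting that the covariant derivative of the raised gradient of $h$ equals the raised complex Hessian; this rests squarely on metric compatibility and the K\"ahler symmetry $\Gamma^\gamma_{\alpha\delta} = \Gamma^\gamma_{\delta\alpha}$. The more delicate point is purely bookkeeping: one must track the signs and factors of $i$ through the conventions $dh = -\iota_v\omega$ and through the definition of the bundle moment in \cite{BGV}, so as to land on $+g^{-1}i\db\d h$ rather than its negative. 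An alternative to this identification route is to verify $\iota_v R = -D_{\End TX^{1,0}}(g^{-1}i\db\d h)$ directly, using the Ricci identity $\nabla_{\bar\epsilon}\nabla_\delta v^\gamma = R^\gamma{}_{\mu\delta\bar\epsilon}v^\mu$ for the holomorphic field $v^{1,0}$; this amounts to the same calculation organised differently.
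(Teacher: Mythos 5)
Your proposal is correct in substance, and it takes a genuinely different route from the paper. The paper verifies the identity $\iota_v R = -D_{\End TX^{1,0}}(g^{-1}i\db\d h)$ directly, working in holomorphic normal coordinates at a point: three derivatives land on the Hamiltonian relation $\d_{\bar\beta}h = -ig_{\alpha\bar\beta}v^\alpha$, producing the curvature tensor via $\d\db g$ in normal coordinates (essentially a Bochner/Ricci-identity computation). You instead invoke the canonical moment $\mathcal{L}_v - \nabla_v$ of \cite{BGV} — whose moment property is a purely formal consequence of invariance of the Chern connection, namely $[\nabla_Y,\mathcal{L}_v-\nabla_v] = F(v,Y)$, which is exactly the content of the result the paper cites only for \emph{existence} of moments — and thereby reduce everything to the first-order, frame-independent identification $\nabla_\delta v^\gamma = i g^{\gamma\bar\beta}\d_\delta\d_{\bar\beta}h$. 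This is shorter and more conceptual: no normal coordinates and no curvature identities are needed, and it makes transparent why the complex Hessian appears. What the paper's heavier computation buys is reusability: the same normal-coordinate argument is recycled essentially verbatim in Proposition \ref{almost-hol-moment-map}, where only $\O(|z|^2)$ control on the Nijenhuis tensor is available and the abstract integrable-case shortcut is not directly applicable.

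The one point you must close, rather than leave as ``$\mp$'', is the sign, since the proposition is false with the opposite sign and Definition \ref{def:vb_moment_map} fixes the convention unambiguously. The loop does close, with two compensating minus signs: (a) invariance of the connection gives $\iota_v R = +D_{\End TX^{1,0}}(\mathcal{L}_v - \nabla_v)$, so the moment in the sense of Definition \ref{def:vb_moment_map} is $\nabla_v - \mathcal{L}_v$, the \emph{negative} of the operator you compute; and (b) your own frame computation gives $(\mathcal{L}_v - \nabla_v)\d_\delta = -(\nabla_\delta v^\gamma)\d_\gamma$, i.e. $\mathcal{L}_v - \nabla_v = -\nabla v^{1,0} = -g^{-1}i\db\d h$ once the paper's Hamiltonian convention $dh = -\iota_v\omega$ (equivalently $v^\gamma = ig^{\gamma\bar\beta}\d_{\bar\beta}h$) is inserted. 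Hence $\nabla_v - \mathcal{L}_v = +g^{-1}i\db\d h$ is the moment, exactly as claimed. With that bookkeeping made explicit, your argument is a complete and valid alternative proof.
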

		
		\begin{proof}
		
			We first prove equivariance. Since $\mu$ itself is equivariant, for $k\in K$ and $v\in \mfk$ we have $h_{\mathrm{Ad}(k)v} = k^*h_v,$  where $h_v := \langle \mu, v\rangle$ and \(\mathrm{Ad}:K\to\End(\mathfrak{k})\) denotes the adjoint action of \(K\) on \(\mathfrak{k}\). Further since $K$ acts by biholomorphisms preserving $\omega$, the Hermitian metric $g$ is $K$-invariant and for an arbitrary smooth function $f$ on $X$ we have $$ i\db\d k^*f=  k^*i\db\d  f.$$ It follows that $\sigma^*: \mfk \to \scA^0(\End TX^{1,0})$ defined by $$v \mapsto g^{-1} i\db\d ( \langle \mu, v\rangle)$$ is $K$-equivariant, as required.

	We next fix $v\in\mfk$ and prove the moment map property with respect to $v$. Let $h = \langle \mu, v\rangle $, so that $\iota_v \omega = -dh.$  Recall that by Equation \eqref{moment-map-formula-explicit}, the moment map is given by $$ \left\langle \sigma, v\right\rangle =  D_v - \L_v.$$ On the tangent bundle, the infinitesimal action corresponds with the usual Lie derivative of vector fields. Since the Chern connection agrees with the restriction of the complexified Levi--Civita connection $D^g$ to $TX^{1,0}$, the torsion-free property implies \cite[Example 7.8]{BGV} $$ \left\langle \sigma, v\right\rangle = (D^gv)|_{TX^{1,0}}.$$ 
	
	Write $v = v^{1,0}+v^{0,1}$. Since $D^g$ preserves the decomposition $TX^{\mathbb{C}} = TX^{1,0} \oplus TX^{0,1}$ and $\langle\sigma, v\rangle$ is an endomorphism of $TX^{1,0}$, we must have $(D^g v^{0,1})|_{TX^{1,0}} = 0$, so $$\langle\sigma, v\rangle = (D^g v^{1,0})|_{TX^{1,0}} = D v^{1,0}.$$ As the vector field $v$  is Hamiltonian, $$v^{1,0} = g^{-1}(i\bar \partial h).$$ Since the Chern connection preserves the metric $g$, we have $$Dv^{1,0} =  D\left(g^{-1}(i\bar \partial h)\right) = g^{-1}\bar{D}(i\db h) = g^{-1}(i\partial \bar \partial h) = g^{-1}(i\db\d h),$$ where $\bar{D}$ is the conjugate of the Chern connection, and in the final equality we apply Remark \ref{rem:db-convention}. \qedhere	\end{proof}

\begin{remark}

As noted to us by one of the referees, one may also use the Killing identity in Riemannian geometry \cite[Exercise 1.5]{WB} in order to prove Proposition \ref{equivariant-chern-weil}. One can also prove the result directly using holomorphic normal coordinates; see a previous version of our work \cite[Proposition 4.1]{DH}. \color{black}

\end{remark}

The first notable consequence of this is the following, for which we introduce the notation  $$H_v := g^{-1}(i\db\d (\langle \mu, v\rangle)).$$

	\begin{corollary} The equivariant Chern--Weil representatives of the Chern characters of $TX^{1,0}$ with respect to the Hermitian metric induced by $\omega$ are given for $v \in \mfk$ by $$\widetilde\ch_{k,\eq}(X,\omega)(v) = \tr \left(\frac{1}{k!}\left( \frac{i}{2\pi}(R+ H_v)\right)^k\right).$$ \end{corollary}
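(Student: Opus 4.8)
The plan is to read the corollary off directly from Proposition~\ref{equivariant-chern-weil} together with the general definition of equivariant Chern--Weil representatives from Section~\ref{sec:equiv-cw}. Applying that definition to the bundle $E = TX^{1,0}$, equipped with its Chern connection (whose curvature is $R$) and with the moment map $\sigma$ furnished by Proposition~\ref{equivariant-chern-weil}, yields immediately
$$\widetilde\ch_{k,\eq}(X,\omega)(v) = \tr\left(\frac{1}{k!}\left(\frac{i}{2\pi}(R + \langle\sigma,v\rangle)\right)^k\right),$$
and this expression is automatically equivariantly closed, as recorded after that definition. So the entire content of the corollary reduces to the bookkeeping identity $\langle\sigma,v\rangle = H_v$.

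The one step requiring genuine care is precisely this identification, and I expect it to be the main (indeed only) obstacle. Proposition~\ref{equivariant-chern-weil} produces the moment map in the form $\langle\sigma,v\rangle = g^{-1}i\db\d\langle\mu,v\rangle$, whereas $H_v$ is defined using the opposite order $g^{-1}i\d\db\langle\mu,v\rangle$. As $(1,1)$-forms these differ by a sign, since $\d\db = -\db\d$ on functions, so at first glance one might fear a sign discrepancy in the stated formula. I would resolve this by unwinding the index-raising convention: the endomorphism attached to $i\db\d h$ (respectively $i\d\db h$) is obtained by raising the $(0,1)$-cotangent factor of the underlying tensor $\d_\alpha\d_{\bar\beta}h$ via $g$, and because mixed partials commute this tensor is the same regardless of the order in which $\d$ and $\db$ are applied. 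Concretely, in the holomorphic normal coordinates used in the proof of Proposition~\ref{equivariant-chern-weil}, both $g^{-1}i\db\d h$ and $g^{-1}i\d\db h$ have matrix $ig^{\alpha\bar\beta}\d_{\bar\beta}\d_\gamma h$, the apparent sign from reordering the wedge being cancelled by the reordering of the tensor factors before raising. Hence the two endomorphisms coincide and $H_v = \langle\sigma,v\rangle$.

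With this identity in hand the argument is complete: everything else is a direct substitution into the definition of the equivariant Chern characters, with no further computation or analysis required. I would present the proof as a single short paragraph invoking Proposition~\ref{equivariant-chern-weil}, noting the coordinate identity $H_v = \langle\sigma,v\rangle$, and then quoting the definition of $\widetilde\ch_{k,\eq}$.
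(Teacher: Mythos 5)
Your proposal is correct and matches the paper's (implicit) proof: the corollary is stated there without a separate argument, as an immediate consequence of Proposition~\ref{equivariant-chern-weil} together with the definition of equivariant Chern--Weil representatives in Section~\ref{sec:equiv-cw}, exactly as you do. Your careful resolution of the $\d\db$ versus $\db\d$ ordering is also consistent with the paper's own conventions, which use $H_v = g^{-1}i\d\db\langle\mu,v\rangle$ and $H_v = g_{\scV}^{-1}i\db_{\mathcal{V}}\d_{\mathcal{V}}h_v$ interchangeably, both denoting the endomorphism with local matrix $ig^{\alpha\bar\beta}\d_{\bar\beta}\d_\gamma h_v$.
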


This is a standard result for the \emph{first} Chern class, where (since $\tr (iH_v) = -\Delta \langle \mu, v\rangle$ and $\tr (iR) = 2\pi\Ric\omega$) for $j=1$ the result states that the equivariant differential form \begin{equation}\label{convention}v \mapsto \Ric\omega - \frac{1}{2\pi}\Delta \langle \mu, v\rangle \end{equation} is equivariantly closed, and is a representative of \(c_{1,\eq}(X)\). This is, at least beyond equivariance of the form, also a classical consequence of the Bochner formula  \cite[Proposition 8.8.3, Remark 8.8.2]{PG} (having been used many times since \cite[Lemma 28]{GS-duke}, \cite[Section 3]{EL}), and to our knowledge equivariance was first explicitly stated for general compact Lie groups by McCarthy \cite[Proof of Proposition 3.5]{JM-osc}.

Our main applications will instead use a consequence of this global result which holds for the relative tangent bundle of a holomorphic submersion. The setting is the following. We consider a proper holomorphic submersion $\pi: X \to B$ between finite-dimensional K\"ahler manifolds; the fibres are thus compact, but $B$ and $X$ themselves are not required to be so. We denote the fibre of $\pi$ over $b \in B$ by $X_b$. We fix a compact Lie group $K$ acting by biholomorphisms on $X$ and $B$ in such a way that $\pi$ is a $K$-equivariant map. We finally fix a $K$-invariant relatively K\"ahler metric $\omega_X$ along with a moment map $\mu_X: X \to \mfk^*$ with respect to $\omega_X$ (recalling we do not demand that $\omega_X$ be globally positive).

		Letting  \(\mathcal{V}^{1,0}:=\ker(d\pi:TX^{1,0}\to TB^{1,0})\) denote the vertical holomorphic tangent bundle, we write \(g_{\mathcal{V}}\) for the Hermitian metric on \(\mathcal{V}^{1,0}\) induced by the relatively K{\"a}hler metric $\omega_X$ and set  \(R_{\scV}\) to be its curvature, so that $R_{\scV}$ is now a \((1,1)\)-form on \(X\) with values in \(\End(\mathcal{V}^{1,0})\). As before, through the metric $g_{\mathcal{V}}$, for a smooth function $h$ on $X$ we can think of \(i\db_{\mathcal{V}}\d_{\mathcal{V}}h\) as an endomorphism of \(\mathcal{V}^{1,0}\), by raising the \((0,1)\)-form part to a \((1,0)\)-vector field; we will write \(g_{\mathcal{V}}^{-1}i\db_{\mathcal{V}}\d_{\mathcal{V}} h \) for this endomorphism. Here $\partial_{\V}$ is the projection onto $(\V^{1,0})^*$ of $\partial$ and $\bar\partial_{\V}$ is the natural operator on the holomorphic vector bundle $(\V^{1,0})^*$. Noting that $\scV^{1,0}$ admits a natural $K$-action, we may ask for a moment map.
				
		\begin{corollary}\label{equivariant-chern-weil-relative} A moment map \(\sigma\) for the $K$-action on $\scV$ is given by $$\langle \sigma, v\rangle = g_{\mathcal{V}}^{-1}i\db_{\mathcal{V}}\d_{\mathcal{V}} (\langle \mu_X, v\rangle),$$ where \(v\in\mathfrak{k}\).

		\end{corollary}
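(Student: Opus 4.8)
The plan is to mirror the proof of Proposition \ref{equivariant-chern-weil}, now carried out fibrewise in holomorphic coordinates adapted to the submersion $\pi$. Equivariance of $\sigma$ follows by the identical argument to that Proposition: since $K$ acts by biholomorphisms commuting with $\pi$, the action preserves $\mathcal{V}^{1,0} = \ker(d\pi)$ and the induced metric $g_{\mathcal{V}}$, and $i\db_{\mathcal{V}}\d_{\mathcal{V}}(k^*f) = k^* i\db_{\mathcal{V}}\d_{\mathcal{V}}f$ since $k$ preserves the fibrewise complex structure; combined with equivariance of $\mu_X$ this gives $K$-equivariance of $v \mapsto g_{\mathcal{V}}^{-1}i\db_{\mathcal{V}}\d_{\mathcal{V}}\langle \mu_X, v\rangle$. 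The substance is therefore the moment map identity of Definition \ref{def:vb_moment_map}, that is $\iota_v R_{\scV} = -D\langle\sigma,v\rangle$ as an $\End\mathcal{V}^{1,0}$-valued one-form on $X$.

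To verify this I would fix $x \in X$ and choose holomorphic coordinates $(z^1,\dots,z^n,w^1,\dots,w^m)$ with the $z^\alpha$ restricting to coordinates on the fibre $X_{\pi(x)}$ and the $w^i$ pulled back from $B$, normalised so that $g_{\mathcal{V}}$ is in fibrewise holomorphic normal form at $x$ (and, if convenient, so that the mixed components $g_{\alpha\bar{j}}$ vanish at $x$). In this frame $\mathcal{V}^{1,0}$ is spanned by the $\d/\d z^\alpha$, the vertical part of the Chern connection on $(\mathcal{V}^{1,0},g_{\mathcal{V}})$ vanishes at $x$, and the leading contributions to each covector component of the identity are computed exactly as in \eqref{eq:R-moment-map}: the vertical Hamiltonian relations, obtained by pairing $\iota_v\omega_X = -d\langle \mu_X,v\rangle$ with $\d/\d z^\alpha$ and $\d/\d\bar{z}^\beta$, reproduce the vertical analogue of \eqref{eq:hamiltonian_coordinates}, and feeding these into $-\d_{\bar\delta}(ig_{\mathcal{V}}^{-1}\d_{\bar{\beta}}\d_\gamma h)$ recovers the vertical curvature term $R_{\epsilon\bar\delta\gamma}{}^\alpha v^\epsilon$ just as in the Proposition.

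The hard part will be the genuinely new terms with no counterpart in Proposition \ref{equivariant-chern-weil}, arising because $\omega_X$ is only relatively Kähler: the mixed metric components $g_{\alpha\bar{j}}$ make the Hamiltonian relations pick up extra pieces involving the horizontal component $v^i$ of $v$, and correspondingly $\iota_v R_{\scV}$ contains the mixed curvature terms $R_{i\bar\delta\gamma}{}^\alpha v^i$ together with the horizontal components of $-D\langle\sigma,v\rangle$ coming from the non-vanishing horizontal part of the connection at $x$. The crux is to show these correction terms match on the two sides. For this I would use crucially that the $(1,0)$-part $v'$ is a holomorphic vector field on $X$, so that $\db v' = 0$, and that its horizontal component $v^i$ is \emph{basic}, i.e. pulled back from $B$ and hence annihilated by the vertical derivatives $\d_{\mathcal{V}}$ and $\db_{\mathcal{V}}$; these two facts should force the $\d g_{\alpha\bar{j}}$-type corrections to assemble exactly into the horizontal components of the vertical curvature $R_{\scV}$, thereby completing the identity in all directions.
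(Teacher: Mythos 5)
Your route is genuinely different from the paper's, and as written it has a gap at precisely the step you yourself flag as the crux. For contrast: the paper never confronts the mixed terms at all. It observes the statement is local on $B$, picks a K\"ahler form $\omega_B$ on a chart with moment map $\mu_B$, notes that $\omega_k = k\omega_B + \omega_X$ is genuinely K\"ahler for $k \gg 0$ with moment map $\mu_k = k\mu_B + \mu_X$, applies Proposition \ref{equivariant-chern-weil} to $(X,\omega_k)$, and then extracts the corollary as the $\End\scV^{1,0}$-block of that identity under the decomposition $TX = \mathcal{H}\oplus\scV$, using that $\scV$ is a holomorphic subbundle on which $\omega_k$ and $\omega_X$ induce the same metric, and that the pullback of $\mu_B$ is fibrewise constant and so contributes nothing to the vertical Hessian. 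Your plan instead redoes the coordinate computation directly, which forces you to handle the mixed terms head-on.

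The gap is that you defer the matching of these terms to the assertion that holomorphicity of $v'$ and basicness of $v^i$ ``should force'' them to assemble, but those two facts are not sufficient, and the computation is never carried out. The missing ingredient is the \emph{global closedness} $d\omega_X = 0$. Concretely, the Hamiltonian relation gives $\d_{\bar\beta}h = -i\left(g_{\epsilon\bar\beta}v^\epsilon + g_{i\bar\beta}v^i\right)$, so the vertical Hessian of $h$ (hence $\langle\sigma,v\rangle$) involves derivatives of the \emph{mixed} block $g_{i\bar\beta}$, whereas the mixed components of $R_{\scV}$ at a normalized point are horizontal derivatives of the \emph{vertical} block, $-g^{\alpha\bar\beta}\d_i\d_{\bar\delta}g_{\gamma\bar\beta}$. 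Matching the two requires identities such as $\d_{\bar\delta}\d_\gamma g_{i\bar\beta} = \d_{\bar\delta}\d_i g_{\gamma\bar\beta}$, which is the $\d_{\bar\delta}$-derivative of $\d_\gamma g_{i\bar\beta} = \d_i g_{\gamma\bar\beta}$, i.e.\ a component of $d\omega_X = 0$ on the total space; the conjugate identities from the $(1,2)$-part of $d\omega_X=0$ are needed for the $d\bar w^j$-components of the moment map equation. None of this follows from $\db v' = 0$ or from $v^i$ being pulled back from $B$ --- those facts only kill the derivative-of-$v$ terms. A secondary point: your normalization (a holomorphic product chart in which $g_{\alpha\bar\beta}$ is flat to first order in \emph{all} directions at $x$, including $\d_i g_{\alpha\bar\beta}(x)=0$, so that the Chern connection of $\scV$ vanishes there) also needs justification; fibrewise K\"ahler normal coordinates only control vertical derivatives, and killing the horizontal ones requires a further fibre-preserving change of the form $z^\alpha \mapsto z^\alpha + c^\alpha_{\beta i}z^\beta w^i$. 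With these two points supplied your direct computation would go through, but as it stands the central identity is asserted rather than proved.
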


\begin{proof}

Equivariance of \(\sigma\) is similarly deduced as in Proposition \ref{equivariant-chern-weil}, so we omit the proof of this. The result is local, so we may fix a chart around a point in $B$ and endow this chart with a $K$-invariant  metric $\omega_B$ admitting a moment map $$\mu_B: B \to \mfk^*;$$ such a moment map can always be found, using that $\omega_B$ is exact in the given chart.

While the form $\omega_X$ is only relatively K\"ahler, for $k \gg 0$ the form $$\omega_k = k\omega_B + \omega_X,$$ is genuinely K{\"a}hler, perhaps after shrinking $B$ once more. The associated moment map takes the form $$\mu_k = k\mu_B + \mu_X,$$  and denoting by $R_k$ the curvature of the induced Hermitian metric $g_k$ on $TX^{1,0}$, by Proposition \ref{equivariant-chern-weil} \begin{equation}\label{eq:large-volume-moment-map}
\iota_v R_k = -D_k(g_k^{-1}i\db\d \langle \mu_k, v\rangle ).
\end{equation} This is an equality of $\End TX^{1,0}$-valued one-forms, and we wish to deduce from this the equality of $\End \scV^{1,0}$-valued one-forms \begin{equation}\label{eq:relative-moment-map}
\iota_v R_{\scV} = -D_{\End \scV}(g_{\scV}^{-1}i\db_{\scV}\d_{\scV} \langle \mu_X, v\rangle ).
\end{equation}

To do this, we merely consider the block decomposition of \eqref{eq:large-volume-moment-map} under \(TX^{1,0} = \mathcal{H}^{1,0}\oplus\mathcal{V}^{1,0}\), where \(\mathcal{H}^{1,0}\) is the orthogonal complement of \(\mathcal{V}^{1,0}\) under \(\omega\) (equivalently \(\omega_k\)). Since \(\mathcal{V}^{1,0}\) is a holomorphic subbundle of \(TX^{1,0}\), the component of \(R_k\) in \(\End(\scV^{1,0})\) is just \(R_{\scV^{1,0}}\), as \(\omega_k\) induces the same metric on \(\scV^{1,0}\) as \(\omega\) does for all \(k\). Similarly, the Chern connection of \(TX^{1,0}\) preserves the decomposition \(\mathcal{H}^{1,0}\oplus\mathcal{V}^{1,0}\) and restricts to the Chern connection of \(\scV^{1,0}\). Since the \(\End(\scV^{1,0})\) component of \(g_k^{-1}i\db\d\langle\mu_k, v\rangle\) is equal to \(g_{\scV}^{-1}i\db_{\scV}\d_{\scV}\langle\mu_X, v\rangle\), we immediately obtain \eqref{eq:relative-moment-map}. \qedhere
\end{proof}

To construct the scalar curvature as a  moment map, we will only require the trace of this result. Recall that the \emph{relative anticanonical class} $-K_{X/B} = \Lambda^n \scV^{1,0}$ is by definition the top exterior power of the relative holomorphic tangent bundle; induced from the Hermitian metric $\omega_X$ on $\scV^{1,0}$ is thus a Hermitian metric on $-K_{X/B}$, and we write \(i/2\pi\) times the curvature of this metric as $\rho \in c_1(-K_{X/B})$, so that $\frac{i}{2\pi}\tr (R_{\scV}) = \rho$. Similarly the trace of $g_{\scV}^{-1}\d_{\mathcal{V}}\db_{\mathcal{V}} \langle\mu,v\rangle$ is the vertical Laplacian $\Delta_{\scV} \langle\mu,v\rangle$, defined by restricting $\langle\mu,v\rangle$ to a fibre and taking the Laplacian given by restricting the metric $\omega_X$ to the fibre.
	
	\begin{corollary}\label{relative-ricci-eq} The equivariant differential form $$v \mapsto \rho - \frac{1}{2\pi}\Delta_{\scV} \langle\mu,v\rangle$$ is equivariantly closed, and is a representative of $c_{1,\eq}(-K_{X/B})$.\end{corollary}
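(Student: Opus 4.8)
The plan is to deduce Corollary \ref{relative-ricci-eq} directly from Corollary \ref{equivariant-chern-weil-relative} by taking equivariant traces, since the relative anticanonical class is $-K_{X/B} = \Lambda^n \scV^{1,0}$ and the first equivariant Chern class of a determinant line bundle is the trace of the equivariant curvature of the underlying bundle. Concretely, Corollary \ref{equivariant-chern-weil-relative} provides a moment map $\langle\sigma,v\rangle = g_{\scV}^{-1}i\db_{\scV}\d_{\scV}\langle\mu_X,v\rangle$ for the $K$-action on $\scV^{1,0}$, and hence by the equivariant Chern--Weil formalism of Section \ref{sec:equiv-cw} the equivariant form $v \mapsto \tr\left(\tfrac{i}{2\pi}(R_{\scV} + \langle\sigma,v\rangle)\right)$ is an equivariantly closed representative of $c_{1,\eq}(\scV^{1,0}) = c_{1,\eq}(-K_{X/B})$.

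First I would record the two identifications of traces. By the definition of $\rho$ as $\tfrac{i}{2\pi}\tr(R_{\scV})$, the zero-order-in-$v$ part of the trace is exactly $\rho$. For the linear part, I would invoke the fact stated in the text just before the corollary: the trace of $g_{\scV}^{-1}i\db_{\scV}\d_{\scV}\langle\mu,v\rangle$ is the vertical Laplacian $\Delta_{\scV}\langle\mu,v\rangle$. More precisely, $\tr\left(\tfrac{i}{2\pi}\langle\sigma,v\rangle\right) = \tfrac{i}{2\pi}\tr\left(g_{\scV}^{-1}i\db_{\scV}\d_{\scV}\langle\mu,v\rangle\right) = -\tfrac{1}{2\pi}\Delta_{\scV}\langle\mu,v\rangle$, matching the sign conventions fixed earlier (recall $\Delta = -\d^*\d$ and $\tr(iH_v) = -\Delta\langle\mu,v\rangle$ in the global case of the preceding Corollary). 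Summing these two contributions gives precisely the claimed equivariant form $v \mapsto \rho - \tfrac{1}{2\pi}\Delta_{\scV}\langle\mu,v\rangle$.

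The remaining point is to justify that trace commutes with the equivariant Chern--Weil construction in the sense needed, i.e.\ that the equivariant first Chern class of $\scV^{1,0}$ (computed via $\sigma$) agrees with the equivariant first Chern class of its determinant line bundle $-K_{X/B}$, and that the induced moment map and connection on $-K_{X/B}$ are compatible with the metric on $\scV^{1,0}$. This is the standard functoriality of Chern--Weil theory under $\Lambda^{\mathrm{top}}$, whose equivariant version is the assertion (implicit in \cite[p.\ 211]{BGV}) that $\widetilde\ch_{1,\eq}(\scV^{1,0},A)(v) = \tr\left(\tfrac{i}{2\pi}(R_{\scV}+\langle\sigma,v\rangle)\right)$ is equivariantly closed and represents $c_{1,\eq}$.

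I expect the only real subtlety---and hence the main obstacle, though a mild one---to be bookkeeping of signs and factors of $2\pi$ in the vertical Laplacian identification, together with confirming that $\db_{\scV}\d_{\scV}$ (rather than $\d_{\scV}\db_{\scV}$) yields the Laplacian with the correct sign so that the zero-order term $\rho$ and the linear term $-\tfrac{1}{2\pi}\Delta_{\scV}\langle\mu,v\rangle$ carry the signs asserted in the statement. Everything else is a formal consequence of Corollary \ref{equivariant-chern-weil-relative} and the equivariant Chern--Weil formalism, so the proof should be short.
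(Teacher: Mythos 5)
Your proposal is correct and takes essentially the same route as the paper: Corollary \ref{relative-ricci-eq} is stated there without a separate proof precisely because it is the trace of Corollary \ref{equivariant-chern-weil-relative}, combined with the identifications $\tfrac{i}{2\pi}\tr(R_{\scV})=\rho$ and $\tr\bigl(g_{\scV}^{-1}i\db_{\scV}\d_{\scV}\langle\mu,v\rangle\bigr)$ equalling the vertical Laplacian (fixed in the paragraph preceding the statement), the equivariant Chern--Weil formalism of Section \ref{sec:equiv-cw}, and $c_{1,\eq}(\scV^{1,0})=c_{1,\eq}(\Lambda^n\scV^{1,0})=c_{1,\eq}(-K_{X/B})$. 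The sign and $2\pi$ bookkeeping you flag is indeed the only point requiring care, and your conventions match the paper's.
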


\subsection{Scalar curvature as a moment map in finite dimensions}\label{sec:scalarmoment}

Our setup is the same as the end of Section \ref{sec:chern-weil-tangent}, so that $\pi: X \to B$ is a proper holomorphic submersion of relative dimension $n$, $K$ is a compact Lie group acting holomorphically on both $X$ and $B$ in such a way that $\pi$ is $K$-equivariant, and $\omega$ is a $K$-invariant relatively K\"ahler metric on $X$ with cohomology class \(\alpha = [\omega]\). In this generality, we prove a moment map property of the scalar curvature. The novelty in comparison to the prior work (beyond the basic approach) is that we allow \emph{both} the complex structure and the symplectic structure to vary. The results here will be a special case of the results concerning $Z$-critical K\"ahler metrics, but we isolate the scalar curvature result as it is of independent interest. 

We denote by $\omega_b := \omega|_{X_b}$ the resulting  K\"ahler metric on the fibre $X_b$ of $\pi$  for $b\in B$, and suppose that $\mu: X \to \mfk^*$ is a moment map for the $K$-action on $(X, \omega)$. Denote also $$\hat S_b =  n\frac{\int_{X_b}c_1(X_b)\cdot \alpha^{n-1}}{\int_{X_b}\alpha^n}$$ the fibrewise average scalar curvature, which is independent of $b\in B$.

\begin{definition}

We define the \emph{Weil--Petersson form} on $B$ to be the closed $(1,1)$-form $$\Omega = \frac{\hat S_b}{n+1}\int_{X/B}\omega^{n+1} -\int_{X/B} \rho \wedge \omega^n.$$ 

\end{definition}

The Weil--Petersson form is usually only defined for families of cscK manifolds  \cite[Definition 7.1]{FS-moduli} (i.e.\ when $\omega$ has constant scalar curvature when restricted to each fibre), in which case it is known to be semipositive (its strict positivity is also well-understood). 

\begin{theorem}\label{integrable-scalar-curvature}

The map $\sigma: B \to \mfk^*$ defined by $$\langle \sigma(b), v\rangle = \int_{X_b} \langle \mu, v\rangle|_b (\hat S_b - S(\omega_b))\omega_b^n$$ is a moment map for the $K$-action on $(B,\Omega)$.

\end{theorem}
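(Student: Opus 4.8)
The plan is to realize this theorem as the degree-zero part of a fibre-integral identity, following the strategy sketched in the introduction. The key players are already assembled: by Corollary \ref{relative-ricci-eq} the equivariant form $v\mapsto \rho - \frac{1}{2\pi}\Delta_{\scV}\langle\mu,v\rangle$ is equivariantly closed and represents $c_{1,\eq}(-K_{X/B})$, while the moment map hypothesis makes $v\mapsto \omega + \langle\mu,v\rangle$ equivariantly closed. First I would form the wedge product
\[
\eta(v) := \frac{\hat S_b}{n+1}(\omega + \langle\mu,v\rangle)^{n+1} - \left(\rho - \frac{1}{2\pi}\Delta_{\scV}\langle\mu,v\rangle\right)\wedge(\omega+\langle\mu,v\rangle)^n,
\]
which is equivariantly closed by Lemma \ref{lemma:wedge} (the scalar $\hat S_b/(n+1)$ being constant). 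Since $\pi$ is a proper $K$-equivariant submersion, Lemma \ref{lemma:fibre} gives that $\int_{X/B}\eta$ is equivariantly closed on $B$.

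The next step is to extract the correct degrees. For fixed $v$, $\eta(v)$ is an inhomogeneous form on $X$; the fibre integral lowers form-degree by $2n$, so only the components of $\eta(v)$ of fibre-degree $2n$ and $2n+2$ survive to give a function plus a $(1,1)$-form on $B$ (higher and lower pieces integrate to zero for degree reasons). I would compute these two surviving pieces separately. The $(1,1)$-form part comes entirely from the $v$-independent top-degree-in-$X$ terms and yields exactly the Weil--Petersson form $\Omega$ as defined. For the function part $\sigma_v(b)$, I would expand $(\omega+\langle\mu,v\rangle)^{n+1}$ and $(\omega+\langle\mu,v\rangle)^n$ binomially in the scalar $\langle\mu,v\rangle$, collect the terms whose differential-form part restricts to a top form $\omega_b^{n}$ on the fibre, and integrate. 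The term $\frac{\hat S_b}{n+1}\binom{n+1}{1}\langle\mu,v\rangle\,\omega^n$ contributes $\hat S_b\int_{X_b}\langle\mu,v\rangle\,\omega_b^n$, while the Ricci term contributes $-\int_{X_b}\langle\mu,v\rangle\,\rho\wedge\omega_b^{n}$, which equals $-\int_{X_b}\langle\mu,v\rangle\,S(\omega_b)\,\omega_b^n$ up to the normalization relating $\rho\wedge\omega_b^n$ to $S(\omega_b)\omega_b^{n+1}$ on the fibre.

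The crucial simplification---and the step I expect to be the main obstacle to verify cleanly---is that the contribution of the vertical-Laplacian term $\frac{1}{2\pi}\Delta_{\scV}\langle\mu,v\rangle\wedge(\omega+\langle\mu,v\rangle)^n$ to the function part vanishes upon fibre integration. The relevant piece is $\frac{1}{2\pi}\int_{X_b}\Delta_{\scV}\langle\mu,v\rangle\,\omega_b^n$, which is the integral over the compact fibre $X_b$ of a function lying in the image of the fibrewise Laplacian, hence zero by the divergence theorem (integration by parts against the constant $1$). Care is needed here because $\Delta_{\scV}$ is the \emph{vertical} Laplacian and the form $\langle\mu,v\rangle$ varies over $B$; I would argue that for the degree-$2n$-in-fibre component only the fibrewise integral matters, so the vanishing is genuinely the fibrewise statement $\int_{X_b}\Delta_{\omega_b}f\,\omega_b^n=0$. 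Assembling these computations gives $\sigma_v(b)=\int_{X_b}\langle\mu,v\rangle|_b(\hat S_b - S(\omega_b))\,\omega_b^n$. Finally, the equivariant closedness of $v\mapsto \Omega + \sigma_v$, read in its degree-zero part via $d_{\eq}$, is precisely the statement $d\langle\sigma(b),v\rangle = -\iota_v\Omega$ together with $K$-equivariance of $\sigma$, which is exactly the moment map condition of Definition for the $K$-action on $(B,\Omega)$.
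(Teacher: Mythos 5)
Your proposal is correct and follows essentially the same route as the paper's proof: wedge the equivariantly closed forms coming from Corollary \ref{relative-ricci-eq} and the moment map hypothesis, fibre-integrate using Lemmas \ref{lemma:wedge} and \ref{lemma:fibre}, kill the vertical-Laplacian term by fibrewise integration by parts, and read off the moment map condition from equivariant closedness of the resulting form on $B$ (the paper merely computes $\int_{X/B}\beta\wedge\gamma^n$ and $\int_{X/B}\gamma^{n+1}$ separately before combining, whereas you assemble the single form $\eta$ upfront, which is the same in substance). One notational slip worth fixing: the Ricci contribution to the function part should be $-n\int_{X_b}\langle\mu,v\rangle\,\rho\wedge\omega_b^{n-1}$, which equals $-\int_{X_b}\langle\mu,v\rangle\,S(\omega_b)\,\omega_b^n$ by the trace identity $n\,\Ric(\omega_b)\wedge\omega_b^{n-1}=S(\omega_b)\,\omega_b^n$, rather than the degree-inconsistent expression $\rho\wedge\omega_b^n$ you wrote.
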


\begin{proof}

Corollary \ref{relative-ricci-eq} proves that the form $$\beta: v \mapsto \rho - \frac{1}{2\pi}\Delta_{\scV} \langle \mu, v\rangle$$ is an equivariantly closed form on $X$. Similarly by hypothesis the form $$\gamma: v \mapsto \omega + \langle \mu, v\rangle$$ is equivariantly closed. It follows that $\beta\wedge \gamma^n$ is equivariantly closed on $X$ and hence its fibre integral $$\int_{X/B} \beta\wedge \gamma^n$$ is an equivariantly closed differential form on $B$.

Unravelling what this means will prove the result. By definition for $v \in \mfk$  \begin{align*} \left(\int_{X/B} \beta\wedge \gamma^n\right)(v) 
&= \left(\int_{X/B} \beta(v)\wedge \gamma(v)^n\right), \\ 
&= \int_{X/B} \left( \rho - \frac{1}{2\pi}\Delta_{\scV} \langle \mu, v\rangle \right)\wedge(\omega + \langle \mu, v\rangle)^n. 
\end{align*} We expand this to obtain \begin{align*}\int_{X/B} \left( \rho - \frac{1}{2\pi}\Delta_{\scV} \langle \mu, v\rangle\right)& \wedge(\omega + \langle \mu, v\rangle)^n \\  &= \int_{X/B}\left( \rho\wedge\omega^{n} - \frac{1}{2\pi}\Delta_{\scV} \langle \mu, v\rangle\omega^n + n\langle \mu, v\rangle\rho\wedge\omega^{n-1}\right),\end{align*} where we use that the fibre integral of forms of degree strictly less than $2n$ vanish. The first of these three terms is a $(1,1)$-form on the base $B$, while for degree reasons the latter two are functions on $B$ which we now calculate. Since $$\left( \int_{X/B} \Delta_{\scV} \langle \mu, v\rangle\omega^n\right)(b) = \int_{X_b}\Delta_b (\langle \mu, v\rangle|_{X_b}) \omega_b^n,$$ and since integrals of functions in the image of the Laplacian vanish, this fibre integral also vanishes. Again working fibrewise, \begin{align*}n\left(\int_{X/B} \langle \mu, v\rangle\rho\wedge\omega^{n-1}\right)(b) &= n\int_{X_b} \langle \mu, v\rangle|_{X_b} \Ric(\omega_b)\wedge\omega_b^{n-1}, \\ &=  \int_{X_b} \langle \mu, v\rangle|_{X_b} S(\omega_b)\omega_b^n,\end{align*} where we use that $\rho|_{X_b} = \Ric(\omega_b)$. 

To conclude we perform a similar, but simpler calculation using $\gamma^{n+1}$, which produces $$\left(\int_{X/B} \gamma^{n+1}\right)(v) = \int_{X/B}\omega^{n+1} + (n+1)\int_{X/B} \langle \mu, v\rangle \omega^n, $$where the latter function satisfies $$\left(\int_{X/B} \langle \mu, v\rangle \omega^n\right)(b) = \int_{X_b} \langle \mu, v\rangle |_{X_b}\omega_b^n.$$ Summing up we conclude that as required the map $\sigma: B \to \mfk^*$ defined by $$\langle \sigma, v\rangle(b) = \int_{X_b} \langle \mu,v\rangle|_b (\hat S_b - S(\omega_b))\omega_b^n$$ is a moment map for the $K$-action on $(B,\Omega)$, by definition of $\Omega$. \end{proof}

\begin{remark} The reason to include the term $\frac{\hat{S}_b}{n+1}\int_{X/B}\omega^{n+1}$ in the Weil--Petersson form is so that the moment map involves the scalar curvature \emph{minus its average}, so that constant scalar curvature metrics are zeroes of the moment map.   \end{remark}

\subsection{The general moment map in finite dimensions}

We next prove a finite-dimensional moment map property for the $Z$-critical operator. Our setup and notation is identical to Section  \ref{sec:scalarmoment}. Our strategy is to first associate to a central charge $Z$ in the sense of Definition \ref{def:central-charge-manifolds} a closed $(1,1)$-form $\Omega_Z$ on $B$. Our central charge is now defined fibrewise, and we denote it by $$Z(X_b,\alpha_b) = \sum_{j,k} a_{jk}\int_{X_b}\alpha_b^{j}\cdot \ch_{k_1}(X_b)\cdot\ldots\cdot \ch_{k_r}(X_b) \in \C,$$ where $a_{jk} \in \C$ are a collection of complex numbers. We note that the value \(Z(X_b,\alpha_b)\) is independent of \(b\in B\), since the fibre integral of a closed form is closed, and the resulting closed form on the base has degree zero, so is a constant function.

We first associate a \emph{complex} $(1,1)$-form $\eta_Z \in \scA^{1,1}(B)$ to the central charge $Z$ through fibre integrals. Our construction will be linear, so we consider a single term $\alpha_b^{j}\cdot \ch_{k_1}(X_b)\cdot\ldots\cdot \ch_{k_r}(X_b)$ of the central charge. The relatively K\"ahler metric $\omega$ induces a Hermitian metric on $\scV^{1,0}$ with curvature $R_{\scV} \in \scA^{1,1}(\End \scV^{1,0})$, and hence induces  Chern--Weil representatives which we denote $$\widetilde \ch_k(\scV^{1,0}, \omega) = \tr\left(\frac{1}{k!}\left(\frac{i}{2\pi}R_{\scV}\right)^k\right) \in  \ch_k(\scV^{1,0}).$$ We associate to this term the closed $(n+1,n+1)$ form on $X$ defined by $$\frac{1}{j+1}\omega^{j+1}\wedge \widetilde \ch_{k_1}(\scV^{1,0}, \omega)\wedge \ldots\wedge\widetilde \ch_{k_r}(\scV^{1,0}, \omega) \in \scA^{n+1,n+1}(X).$$ Taking the fibre integral of this $(n+1,n+1)$-form produces a closed $(1,1)$-form on $B$ associated to the single term $\alpha^{j}\cdot \ch_{k_1}(\scV^{1,0})\cdot\ldots\cdot \ch_{k_r}(\scV^{1,0})$, and extending linearly over all terms of the central charge produces a  closed \emph{complex} $(1,1)$-form $\eta_Z \in \scA^{1,1}(B)$, with complexity arising from the fact that we allow the coefficients $a_{jk}$ to be complex themselves.

\begin{definition}\label{def:omegaz} We define the form $\Omega_Z$ associated to the central charge $Z$ to be $$\Omega_Z = \Ima\left(e^{-i\phi(X_b,\alpha_b)} \eta_Z\right).$$\end{definition}

Here we recall \(\phi(X_b,\alpha_b):= \arg Z(X_b,\alpha_b)\in(-\pi,\pi]\) is the \emph{phase} of the central charge. We will primarily be interested in $\Omega_Z$ rather than $\eta_Z$.

\begin{example}Suppose we take the central charge $Z$ to induce the constant scalar curvature equation, namely $Z(X_b,\alpha_b) = i\int_{X_b} \alpha_b^n -  \int_{X_b} c_1(X_b)\cdot \alpha^{n-1}$. Then \(\Omega_Z\) is a positive multiple of the Weil--Petersson form $$\frac{\hat S_b}{n+1}\int_{X/B}\omega^{n+1} -\int_{X/B} \rho \wedge \omega^n.$$ \end{example} 

Recall the definition of \(\tilde{Z}(X_b, \omega_b)\) from equation \eqref{eq:Z-tilde-def}.

\begin{theorem}\label{thm:fin_dim_manifold_moment_map}
Suppose $\mu: X \to \mfk^* $ is a moment map for the $K$-action on $(X,\omega)$. Then the map $\sigma_Z: B \to \mfk^*$ defined by  $$\langle \sigma_Z(b), v\rangle := \int_{X_b} \langle \mu, v\rangle|_b \Ima(e^{-i\phi(X_b,\alpha_b)}\tilde Z(X_b, \omega_b))\omega_b^n$$ for \(v\in\mathfrak{k}\), is a moment map for the $K$-action on $(B,\Omega_Z)$. 

\end{theorem}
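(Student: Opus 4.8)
The plan is to run the same machine as in the scalar curvature theorem of Section~\ref{sec:scalarmoment}, but with the single equivariantly closed factor there replaced by the full collection of equivariant Chern--Weil representatives of the vertical tangent bundle. First I would treat a single term $\alpha^{j}\cdot\ch_{k_1}(X)\cdots\ch_{k_r}(X)$ of the central charge and form the equivariant differential form on $X$
$$\Phi_{j,k}(v):=\frac{1}{j+1}\big(\omega+\langle\mu,v\rangle\big)^{j+1}\wedge\widetilde\ch_{k_1,\eq}(\scV^{1,0},\omega)(v)\wedge\cdots\wedge\widetilde\ch_{k_r,\eq}(\scV^{1,0},\omega)(v).$$
Each factor is equivariantly closed: the factor $\omega+\langle\mu,v\rangle$ is so by hypothesis (hence so is any of its wedge powers), while Corollary~\ref{equivariant-chern-weil-relative} supplies the moment map $\langle\sigma,v\rangle=g_{\scV}^{-1}i\db_{\scV}\d_{\scV}\langle\mu,v\rangle$ for the $K$-action on $\scV^{1,0}$, from which the representatives $\widetilde\ch_{k_m,\eq}(\scV^{1,0},\omega)$ are equivariantly closed by the discussion in Section~\ref{sec:equiv-cw}. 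Lemma~\ref{lemma:wedge} then shows $\Phi_{j,k}$ is equivariantly closed. Summing with the coefficients $a_{jk}$ produces a complex equivariantly closed form $\tilde\Phi_Z$; since the phase $\phi(X_b,\alpha_b)$ is independent of $b$ and $d_{\eq}$ commutes with multiplication by the constant $e^{-i\phi}$ and with taking imaginary parts, the real form $\Ima(e^{-i\phi}\tilde\Phi_Z)$ is again equivariantly closed, and by Lemma~\ref{lemma:fibre} so is its fibre integral $v\mapsto\int_{X/B}\Ima(e^{-i\phi}\tilde\Phi_Z)(v)$ on $B$.

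The next step is a degree count. On $X$ the relevant forms have pure-form degree at most $2(n+1)$, and each insertion of a moment-map term $\langle\mu,v\rangle$ or $\langle\sigma,v\rangle$ lowers the form degree by two. Hence the fibre integral over the $2n$-dimensional fibres is supported in form-degrees $0$ and $2$ on $B$: a $(1,1)$-form coming from the pieces of $X$-degree $2(n+1)$, which carry no moment-map insertion and are thus $v$-independent, and a function linear in $v$ coming from the pieces of $X$-degree $2n$, which carry exactly one insertion (pieces of $X$-degree below $2n$ fibre-integrate to zero). The degree-$2$ part involves no moment map and is by construction $\Ima(e^{-i\phi}\eta_Z)=\Omega_Z$ of Definition~\ref{def:omegaz}. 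It then remains only to identify the degree-$0$ part with $\langle\sigma_Z,v\rangle$, after which equivariant closedness of $v\mapsto\Omega_Z+\langle\sigma_Z,v\rangle$ is, as recalled in Section~\ref{sec3}, exactly the assertion that $\sigma_Z$ is a moment map for $(B,\Omega_Z)$.

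The heart of the proof, and the step I expect to be the main obstacle, is this identification of the degree-$0$ part. There are two sources of a single moment-map insertion. Inserting $\langle\mu,v\rangle$ once into $(\omega+\langle\mu,v\rangle)^{j+1}$ gives, after restriction to a fibre and division by $\omega_b^n$, precisely $\langle\mu,v\rangle|_b$ times the Chern--Weil term \eqref{eq:first-term}. Inserting the bundle moment map $\langle\sigma,v\rangle$ once into the $m$-th Chern--Weil factor gives, by the cyclicity of the trace, a term of the shape $\tfrac{i}{2\pi}\tr(\langle\sigma,v\rangle\,\tilde\ell_m)\,\omega_b^n$ with $\tilde\ell_m$ as in \eqref{eq:Z-tilde-def}. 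The crux is then the fibrewise integration-by-parts identity
$$\frac{i}{2\pi}\int_{X_b}\tr\!\big((g_{\scV}^{-1}i\db\d\langle\mu,v\rangle)\,\tilde\ell_m\big)\,\omega_b^n=-\frac{1}{2\pi}\int_{X_b}\langle\mu,v\rangle\,\d^*\db^*\big(\tilde\ell_m^{\flat}\big)\,\omega_b^n,$$
whose right-hand side reproduces the term \eqref{eq:second-term}. This is proved by rewriting the trace pairing through the metric isomorphism $\flat$ and transposing the formal adjoints $\db^{*},\d^{*}$ onto $\tilde\ell_m^{\flat}$; the special case in which only $c_1$ appears is exactly the computation carried out after Definition~\ref{defpde}, where $\tilde\ell_m^{\flat}$ becomes a multiple of $i\omega$ and the K\"ahler identity $[\db^{*},\wedge\omega]=i\d$ is invoked. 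Summing both contributions over all terms, applying the operation $\Ima(e^{-i\phi}\,\cdot\,)$ and comparing with \eqref{eq:Z-tilde-def} identifies the degree-$0$ part as $\langle\sigma_Z(b),v\rangle$, completing the argument. The technical care lies in tracking the signs and the non-commutativity of the endomorphism-valued factors through this integration by parts, and in verifying that the fibre integrals of all pieces of $X$-degree strictly below $2n$ indeed vanish.
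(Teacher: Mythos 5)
Your proposal is correct and takes essentially the same route as the paper's own proof: you form the equivariantly closed form $\frac{1}{j+1}(\omega+\langle\mu,v\rangle)^{j+1}\wedge\widetilde\ch_{k_1,\eq}\wedge\cdots\wedge\widetilde\ch_{k_r,\eq}$ term by term (using Corollary \ref{equivariant-chern-weil-relative} together with Lemmas \ref{lemma:wedge} and \ref{lemma:fibre}), split the fibre integral by degree into $\Omega_Z$ plus a function, and identify the function through the same linearisation in the bundle moment map and the fibrewise integration by parts $\int_{X_b}\tr(A H_v)\,\omega_b^n=\int_{X_b}h_v\,i\d_b^*\db_b^*\left(A^\flat\right)\omega_b^n$. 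This is precisely the computation carried out in the paper, including the use of cyclicity of the trace to isolate the linear term, so nothing further is needed.
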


\begin{proof}

Equivariance of the (claimed) moment map will be a formal consequence of the theory of equivariant differential forms and equivariance of the map sending $v$ to  $R_{\scV} + g^{-1}i\db_{\mathcal{V}}\d_{\mathcal{V}} (\langle \mu, v\rangle)$, so we fix $v\in\mfk$ and prove that $$\iota_v{\Omega_Z} = -d  \int_{X_b} \langle \mu, v\rangle|_b \Ima(e^{-i\phi(X_b,\alpha_b) }\tilde Z(X_b, \omega_b))\omega_b^n,$$ with the latter function considered as a function of $b\in B$.

Since $\omega_b^n$ and $ \langle \mu, v\rangle|_b$ are real, it is enough to calculate the moment map for  a single term comprising $\eta_Z$, namely the $(1,1)$-form on $B$ given by $$\frac{1}{j+1}\int_{X/B}\omega^{j+1}\wedge \widetilde \ch_{k_1}(V^{1,0}, \omega)\wedge \ldots\wedge\widetilde \ch_{k_r}(V^{1,0}, \omega).$$ We replace this form with an equivariantly closed form by replacing $\omega$ with $\omega+\mu$ and similarly replacing each term $\widetilde \ch_{k_m}(\scV^{1,0}, \omega)$ with $\widetilde \ch_{k_m,\eq}(\scV^{1,0}, \omega)$, producing by Lemma \ref{lemma:wedge} an equivariantly closed form $$\frac{1}{j+1}(\omega+\mu)^{j+1}\wedge \widetilde \ch_{k_1,\eq}(\scV^{1,0}, \omega)\wedge \ldots\wedge\widetilde \ch_{k_r, \eq}(\scV^{1,0}, \omega) $$ on $X$. Thus by Lemma \ref{lemma:fibre} the fibre integral $$\frac{1}{j+1}\int_{X/B}(\omega+\mu)^{j+1}\wedge \widetilde \ch_{k_1,\eq}(\scV^{1,0}, \omega)\wedge \ldots\wedge\widetilde \ch_{k_r, \eq}(\scV^{1,0}, \omega)  $$ is an equivariantly closed form on $B$, and so, for each $v \in \mfk$, induces a closed $(1,1)$-form plus a function for dimensional reasons.

We calculate this form explicitly to conclude. It is clear that the closed $(1,1)$-form on $B$ must be $$\frac{1}{j+1}\int_{X/B}\omega^{j+1}\wedge \widetilde \ch_{k_1}(\scV^{1,0}, \omega)\wedge \ldots\wedge\widetilde \ch_{k_r}(\scV^{1,0}, \omega); $$ this will lead to the appearance of $\Omega_Z$. To calculate the function  component of the fibre integral, since the  fibre dimension is $n$, the only possible contribution arises from integrating a function against an $(n,n)$-form.

We thus calculate the relevant function. Denote by $h_v = \langle \mu, v\rangle $ and $H_v = g_{\scV}^{-1}i\d_{\mathcal{V}}\db_{\mathcal{V}}h_v \in \scA^{0}(\End\scV^{1,0})$, so that the equivariant Chern--Weil representatives of the Chern characters are given by \begin{align*}\widetilde \ch_{k_m,\eq}(V^{1,0}, \omega)(v) &= \tr\left(\frac{1}{k_m!}\left(\frac{i}{2\pi}(R_{\scV}+H_v)\right)^{k_m}\right), \\ &=  \tr\left(\frac{1}{k_m!}\left(\frac{i}{2\pi}R_{\scV}\right)^{k_m} + \frac{1}{(k_m-1)!}\left(\frac{i}{2\pi}R_{\V}\right)^{k_m-1}\frac{i}{2\pi}H_v+ \O(H_v^2)\right),
\end{align*} where we use that the trace of a commutator vanishes. In order to obtain a function on $B$, only the \emph{linear} term in $H_v$ can contribute; all other terms have too small degree and integrate to zero. Writing $$\tilde\tau_m(\scV^{1,0}, \omega)(v) = \frac{1}{(k_m-1)!}\tr\left(\left(\frac{i}{2\pi}R_{\scV}\right)^{k_m-1}\frac{i}{2\pi}H_v\right),$$  we may discard the $ \O(H_v^2)$-terms, and may replace our equivariant Chern--Weil representative  with $$v\mapsto \widetilde \ch_{k_m}(\scV^{1,0}, \omega)(v) +\tilde \tau_m(\scV^{1,0}, \omega)(v).$$

We wish to replace the section $H_v \in \scA^{0}(\End \scV^{1,0})$ with the function $h_v\in C^{\infty}(X,\R)$ to obtain the actual $Z$-critical equation by using an integration by parts argument. The relevant integral is calculated over a single fibre, so we fix $b \in B$ and argue as follows. Note that $R_{\scV}$ restricts to the fibre $X_b$ as the curvature \(R_b\) of the Hermitian metric induced by $\omega_b$ on $TX^{1,0}_b$. Denote $$\tilde \ell_{m}(X_b,\omega_b) = \frac{1}{j+1}\frac{\omega_b^{j+1}\wedge \widetilde \ch_{k_1}(X_b, \omega_b)\wedge\ldots\wedge \frac{1}{(k_m-1)!}\left(\frac{i}{2\pi}R_b\right)^{k_m-1} \wedge \ldots\wedge\widetilde \ch_{k_r}(X_b,\omega_b)}{\omega_b^n},$$ namely where we replace a single term $\widetilde \ch_{k_m}(X_b,\omega_b)$ with $\frac{1}{(k_m-1)!}\left(\frac{i}{2\pi}R_b\right)^{k_m-1}$. As the numerator is an $\End(TX_b^{1,0})$-valued $(n,n)$-form, overall $\tilde \ell_{m}(X_b,\omega_b)$ is a section of $\End TX_b^{1,0}$.

Consider for the moment an arbitrary section $A \in \scA^0(\End TX_b^{1,0})$. Then we restrict $H_v$ to $X_b$ and calculate on $X_b$ \begin{align}
\mathrm{tr}(AH_v) = \langle A^\flat,i\db_b\d_b h_v\rangle_{g_b}, \nonumber
\end{align} where \(A^\flat\) is \(A\) considered as a section of \(T^*X_b^{1,0}\otimes T^*X_b^{0,1}\) via the metric \(g_b\) and we emphasise that all derivatives are taken on $X_b$.  We then integrate by parts to obtain \begin{align*}\int_{X_b} \tr(AH_v)\omega^n &= \int_{X_b}  \langle A^\flat,i\db_b\d_b h_v\rangle_{g_b}\omega_b^n, \\ &=  \int_{X_b} h_v (i\d_b^*\db_b^*A^{\flat}) {\omega_b^n}.\end{align*}

Thus the integral of interest in calculating the function component of the equivariantly closed form, namely $$\int_{X_b}\omega^{j+1}\wedge \widetilde \ch_{k_1}(\scV^{1,0}, \omega)\wedge\ldots\wedge \tilde  \tau_m(\scV^{1,0}, \omega)(v)\wedge \ldots\wedge\widetilde \ch_{k_r}(\scV^{1,0}, \omega),$$ can be rewritten as $$\int_{X_b}h_v\left(\frac{-1}{2\pi} \d_b^*\db_b^*\left(\tilde \ell_{m}(X_b,\omega_b)^\flat\right)\right) \omega_b^n.$$

From here the calculation involves repeatedly applying this idea and using the definition of the $Z$-critical equation to see that the moment map is indeed given by \[\langle \sigma_Z(b), v\rangle = \int_{X_b} \langle \mu, v\rangle|_b \Ima(e^{-i\phi(X_b,\alpha_b)}\tilde Z(X_b, \omega_b))\omega_b^n.\qedhere\]\end{proof}

\begin{remark}\label{rmk:c-vs-ch} This calculation is why we use Chern characters rather than Chern classes. If one uses Chern classes, following a similar strategy one instead uses the \emph{adjugate} to linearise the Chern--Weil representatives, through $$\frac{d}{d\epsilon}\bigg|_{\epsilon = 0} \det  \left(R+ \epsilon H_v+ t\Id_{\End \scV}\right) = \tr\left(\Adj(R+t\Id_{\scV}) H_v\right);$$ here, the adjugate should be understood as the curvature of the induced Hermitian metric on $\Lambda^{n-1} \scV^{1,0}.$ This produces an equivalent formulation of the $Z$-critical equation, but is notationally more cumbersome.
\end{remark}

\begin{remark}\label{rem:prev_results} A weak moment map property for the special case of the $Z$-critical equation only involving powers of the \emph{first} Chern class is proven in \cite[Section 3.3]{stabilityconditions}. The proof there only applies to isotrivial families (so only a single complex manifold is considered), using very different ideas involving Deligne pairings. \end{remark}

\section{Moment maps on the space of almost complex structures}\label{sec:ac-structures}

The goal of this section is to prove moment map properties for the constant scalar curvature equation and the $Z$-critical equation on the space of almost complex structures. To do so, we begin by developing some geometric aspects of the space of almost complex structures. 

Denoting by $\J(M,\omega)$ the space of almost complex structures on a fixed compact symplectic manifold $(M,\omega)$, we will show that $\J(M,\omega)$ admits a universal family $\pi: \U \to \J(M,\omega)$, so that the fibre over $J\in \J(M,\omega)$ is the almost complex manifold $(M,J)$ endowed with the compatible symplectic form $\omega$, making the fibre into an almost K\"ahler manifold. In particular the universal family has a natural relatively almost K\"ahler metric $\omega_{\U}$, and we will further lift the action of the group of exact symplectomorphisms $\G$ to $\U$, making $$\pi:(\U,\omega_{\U}) \to \J(M,\omega)$$ a $\G$-equivariant almost holomorphic submersion, endowed with a relatively almost K\"ahler metric.

This is precisely the setup of Section \ref{sec4}, beyond the fact that we consider almost complex structures which may not be integrable, and that the spaces involved are infinite-dimensional (though the map $\pi$ has finite relative dimension). We will thus adapt the techniques of Section \ref{sec4} to this more general setting to produce moment map properties for the $Z$-critical equation on the space of almost complex structures. 

\subsection{Preliminaries on the space of almost complex structures}

\subsubsection{The space of almost complex structures}

We recall some aspects of the theory of the space of almost complex structures, referring to Scarpa for a thorough introduction \cite[Section 1.3]{CS}.

	Let \((M,\omega)\) be a  compact symplectic manifold, and recall that an endomorphism $J \in \scA^0(\End TM)$ is an \emph{almost complex structure} if $J^2 = -\Id_{TM}$. 
	
	\begin{definition}

	An almost complex structure \(J\) on \(M\) is \emph{compatible} with \(\omega\) if for all vector fields $X,Y$ on $M$ we have $$\omega(JX,JY) = \omega(X,Y),$$ and $g(X,Y):=\omega(X,JY)$ is a Riemannian metric on \(M\). We denote by \(\mathcal{J}\) or $\J(M,\omega)$  the set of all almost complex structures on \(M\) compatible with \(\omega\).\end{definition}

	The set $\J$ has the natural structure of an infinite-dimensional Fr{\'e}chet manifold \cite[Section 3]{NK} (see also \cite[Section 1.3]{CS}); we will always consider $\J$ with this structure. Going further, the Fr\'echet manifold $\J$ admits itself an almost complex structure, defined in the following manner. Firstly, the tangent space to \(J\in\mathcal{J}\) is naturally identified with the space of smooth endomorphisms \(A\) of \(TM\) satisfying $$AJ+JA=0 \textrm{ and }\omega(AX,JY)+\omega(JX,AY)=0$$ for all vector fields \(X,Y\) on \(M\). We then define the almost complex structure $\mathbb{J}^{\mathcal{J}}$ on $\J$ by setting, for $A \in T_J\J$, \[\mathbb{J}^{\mathcal{J}}_J(A)=JA.\] Koiso further gives $\J$ the structure of a \emph{complex} Fr\'echet manifold, by producing holomorphic Fr\'echet charts on $\J$ (with holomorphic transition functions). In this way, the almost complex structure $\mathbb{J}^{\mathcal{J}}$ is induced by a genuine complex structure on $\J$.

	\subsubsection{The universal family over the space of almost complex structures}
	We next construct a universal family $\U$ over $\J$, whose fibre over $J\in\J$ is the almost complex manifold $(M,J)$. We give the universal family increasing structure in turn. As a smooth Fr\'echet manifold, we define \(\mathcal{U}:=\mathcal{J}\times M\), which we think of as a submersion over \(\mathcal{J}\) with fibre $M$. As an almost complex manifold, we define  \[\mathbb{J}^{\,\mathcal{U}}_{(J,x)}(A,X):=(\mathbb{J}^{\mathcal{J}}_J(A),J_xX).\] The almost complex structure defined in this way is such that the projection  \(\pi:\mathcal{U}\to\mathcal{J}\) is a holomorphic map between almost complex manifolds.
		
	Universal families admit natural \emph{relative} metrics rather than global ones; in our situation this construction is as follows. Consider the  closed $(1,1)$-form $\omega_{\U}$ defined by pulling back $\omega$ from the (smooth) projection $\U \to M$. Then $\omega_{\U}$ is a \emph{relatively almost K{\"a}hler} metric on \(\mathcal{U}\) , in the sense that it is a closed $(1,1)$-form  satisfying $$\omega_{\U}(\mathbb J^{\U}X,\mathbb J^{\U}Y)=\omega_{\U}(X,Y)$$ for tangent vectors $X,Y$ at a point of $\U$, and \(\omega_{\mathcal{U}}(X,\mathbb{J}^{\mathcal{U}}X)>0\) for any non-zero vertical tangent vector \(X\in TM\subset T\mathcal{U}\). In particular, the restriction of $\omega_{\U}$ to any fibre $\pi^{-1}(J)$ of $\pi$ is an almost K{\"a}hler metric on $(M,J)$. Hence on integrable fibres, $\omega_{\U}$ restricts to a genuine K\"ahler metric.

	We next note the existence of a canonical Chern connection $D^{\scV}$ on the vertical (almost) holomorphic tangent bundle \(\mathcal{V}=TM\subset T\mathcal{U}\) over \(\mathcal{U}\), which preserves the Hermitian metric \[\langle X, Y\rangle_{(J,x)} := \omega_x(X, JY),\] and has \((0,1)\)-part equal to the $\db$-operator \[\db^{\scV} X := \db^J X + \db^{\mathcal{H}}X.\] In this second definition, \(\db^J\) is simply the del-bar operator of the almost complex manifold \((M,J)\), and \(\db^{\mathcal{H}}\) is defined by observing the bundle \(\mathcal{V}\) is trivial in the horizontal directions, so we may extend the del-bar operator of \(\mathcal{J}\) to \(\mathcal{V}\).
	
	To construct \(D^{\scV}\), first observe there exists a natural horizontal \(\mathrm{End}(\mathcal{V})\)-valued 1-form \(\Theta\) on \(\mathcal{U}\), defined by \[\Theta_{(J,x)}(A,X):=A_x,\] where we use the canonical identification of \(T_J\mathcal{J}\) with a subspace of endomorphisms of \(\mathcal{V}=TM\). Clearly \(\Theta\) is complex linear, so we may consider it as a horizontal \((1,0)\)-form with values in \(\End(\scV)\).
	
	Next, recall any compatible almost complex structure \(J\) on \((M, \omega)\) induces a natural Chern connection on \(TM\) defined by \begin{equation}\label{chern-connection}D^J_XY:=D_X^gY-\frac{1}{2}J(D_X^gJ)Y,\end{equation} where \(g\) is the Hermitian metric defined by \(J\) and \(\omega\), and \(D^g\) is the Levi-Civita connection of \(g\). Using these data, we can define the connection \(D^{\scV}\) on \(\mathcal{V}\) by \[D^{\scV}:=D^J+d^{\mathcal{J}}-\frac{1}{2}\mathbb{J}^{\scV}\Theta.\] To elaborate, in the vertical direction at \(J\), the connection is given by \(D^J\). Since \(\mathcal{V}\) is trivial in the horizontal directions, the horizontal part of any connection is \(d^{\mathcal{J}}\) plus a horizontal \(\mathrm{End}(\mathcal{V})\)-valued 1-form; we have chosen \(-\frac{1}{2}\mathbb{J}^{\scV}\Theta\) for that 1-form, where \(\mathbb{J}^{\scV}\in\End(\scV)\) denotes the almost complex structure of \(\mathcal{V}\). 
	
	In fact, since \(\End(\scV)\) is trivial in the horizontal directions, we may take the horizontal derivative of \(\mathbb{J}^{\scV}\) to get \(d^{\mathcal{J}}\mathbb{J}^{\scV}\in\scA^0(\mathcal{H}^*\otimes\End(\scV))\). It is immediate that \(\Theta = d^{\mathcal{J}}\mathbb{J}^{\scV}\), since the tangent space to \(\mathcal{J}\) consists exactly of infinitesimal changes in \(\omega\)-compatible almost complex structures.

\begin{lemma}
\(D^{\scV}\) is the Chern connection associated to the natural del-bar operator and Hermitian metric on \(\mathcal{V}\).
\end{lemma}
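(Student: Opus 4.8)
The plan is to verify directly that $D^{\scV}$ satisfies the two characterising properties of a Chern connection, and then invoke the uniqueness of the Chern connection. Concretely, I would show that $D^{\scV}$ is a $\C$-linear connection on the complex bundle $\scV$ (i.e. it preserves $\mathbb{J}^{\scV}$), that it is compatible with the Hermitian metric $\langle X,Y\rangle_{(J,x)}=\omega_x(X,JY)$, and that its $(0,1)$-part equals the prescribed operator $\db^{\scV}$. Since $\mathcal{U}=\mathcal{J}\times M$ carries the smooth splitting $T\mathcal{U}=\mathcal{H}\oplus\scV$ and $D^{\scV}$ is defined separately in the two factors, each property can be checked along vertical and horizontal directions independently; a convenient frame in the horizontal directions is furnished by the sections of $\scV$ that are constant in the trivialisation.

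The vertical directions require no work: there $D^{\scV}$ restricts to $D^J$, which is by construction the canonical (Chern) connection of the almost-Hermitian manifold $(M,J)$, so it preserves $\mathbb{J}^{\scV}=J$ and the metric $g(\cdot,\cdot)=\omega(\cdot,J\cdot)$ and has $(0,1)$-part $\db^J$, matching the vertical part of $\db^{\scV}$. All the content is therefore in the horizontal directions, where $D^{\scV}$ is the operator $\nabla:=d^{\mathcal{J}}-\tfrac12\mathbb{J}^{\scV}\Theta$, and where every computation rests on the description of $T_J\mathcal{J}$: each tangent vector $A$ satisfies $AJ+JA=0$ and $\omega(AX,JY)+\omega(JX,AY)=0$, the latter being equivalent to $A$ being $g$-symmetric.

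For the $(0,1)$-part I would first record that, by the identity $\Theta=d^{\mathcal{J}}\mathbb{J}^{\scV}$ together with $AJ+JA=0$, one has $\Theta(\mathbb{J}^{\mathcal{J}}A)=JA$ and hence $\mathbb{J}^{\scV}\Theta(\mathbb{J}^{\mathcal{J}}A)=-\Theta(A)$, so $\Theta$ is of type $(1,0)$; since left multiplication by $\mathbb{J}^{\scV}$ is multiplication by $i$ on the complex bundle $\End(\scV)$ and therefore preserves bidegree, the correction term $-\tfrac12\mathbb{J}^{\scV}\Theta$ is also of type $(1,0)$. Consequently $\nabla^{0,1}=(d^{\mathcal{J}})^{0,1}=\db^{\mathcal{H}}$, which is exactly the horizontal part of $\db^{\scV}$. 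For $\C$-linearity I would check $\nabla_A(\mathbb{J}^{\scV}s)=\mathbb{J}^{\scV}\nabla_A s$ on a constant section $s$: here $d^{\mathcal{J}}_A(\mathbb{J}^{\scV}s)=(d^{\mathcal{J}}_A\mathbb{J}^{\scV})s=\Theta_A s$ while $d^{\mathcal{J}}_A s=0$, and the surplus $\Theta_A s$ is cancelled precisely by $-\tfrac12\mathbb{J}^{\scV}\Theta$ upon using $JAJ=A$; this is the structural reason for the factor $-\tfrac12\mathbb{J}^{\scV}\Theta$. Finally, for metric compatibility along a horizontal $A$ and constant $X,Y$, I would compare $A\langle X,Y\rangle=\omega(X,AY)$ (only $J$ varies) with $\langle\nabla_A X,Y\rangle+\langle X,\nabla_A Y\rangle=-\tfrac12\omega(AX,Y)+\tfrac12\omega(X,AY)$, reconciling them via the key identity $\omega(AX,Y)=-\omega(X,AY)$, which follows from $A$ being $g$-symmetric and anticommuting with $J$.

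The main obstacle is conceptual rather than computational: because the fibrewise complex structure $\mathbb{J}^{\scV}=J$ genuinely varies over the base $\mathcal{J}$, the flat operator $d^{\mathcal{J}}$ preserves neither $\mathbb{J}^{\scV}$ nor the metric, and the crux of the argument is that the single correction $-\tfrac12\mathbb{J}^{\scV}\Theta$ simultaneously restores $\C$-linearity and metric compatibility while, being of type $(1,0)$, leaving the $(0,1)$-part untouched. Once the vertical and horizontal verifications are assembled, the uniqueness of the Chern connection identifies it with $D^{\scV}$.
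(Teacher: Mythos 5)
Your proposal is correct and follows essentially the same route as the paper: both reduce the problem to the horizontal directions (since $D^J$ is already the Chern connection fibrewise), check metric compatibility on horizontally constant sections using the $\omega$-compatibility identities satisfied by tangent vectors $A\in T_J\mathcal{J}$, and deduce the $(0,1)$-part condition from $\Theta$ being of type $(1,0)$. The only difference is that you additionally verify $\C$-linearity of $D^{\scV}$ explicitly (via $JAJ=A$), a point the paper leaves implicit; this is a harmless, and arguably welcome, addition.
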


\begin{proof}
Since \(D^J\) is already the Chern connection of \((M,J)\), we only need to compute the conditions in the horizontal direction. To see that \(D^{\scV}\) is Hermitian in the horizontal direction, take sections \(s_1\) and \(s_2\) of \(\mathcal{V}\) that are constant in the horizontal direction. Then for any horizontal tangent vector \(A\), \begin{align*}
d\langle s_1,s_2\rangle(A) &= \frac{1}{2}d^{\mathcal{J}}(\omega(s_1,\mathbb{J}^{\scV}s_2)+\omega(s_2,\mathbb{J}^{\scV}s_1))(A) \\
&= \frac{1}{2}\left( \omega(s_1, (d^{\mathcal{J}}\mathbb{J}^{\scV})(A)s_2)+\omega(s_2,(d^{\mathcal{J}}\mathbb{J}^{\scV}(A))s_1) \right) \\
&= \omega\left(s_1, \frac{1}{2}\Theta(A)s_2\right)+\omega\left(s_2,\frac{1}{2}\Theta(A)s_1\right) \\
&= \omega\left(s_1,\mathbb{J}^{\scV}\left(d^{\mathcal{J}}-\frac{1}{2}\mathbb{J}^{\scV}\Theta\right)s_2\right)(A) + \omega\left(s_2,\mathbb{J}^{\scV}\left(d^{\mathcal{J}}-\frac{1}{2}\mathbb{J}^{\scV}\Theta\right)s_1\right)(A) \\
&= \langle s_1, D^{\scV}_As_2\rangle +  \langle D^{\scV}_As_1,s_2\rangle.
\end{align*} We have already remarked that \(\Theta\) is complex linear, so is considered as a \((1,0)\)-form, from which it follows that the \((0,1)\)-part of \(D^{\scV}\) is the $\db$-operator of \(\mathcal{V}\).
\end{proof}

	\subsubsection{The action of the group of exact symplectomorphisms} We now construct an action of the group of exact symplectomorphisms $\G$ on $\U$. As a smooth manifold $\U$ is given by $\J \times M$, and for $\gamma \in \G$ we define $\gamma(J,x) = (\gamma\cdot J, \gamma(x)),$ where $\G$ acts on $\J$ by $$(\gamma \cdot J)(u) = \gamma_*\circ J\circ \gamma^*(u),$$ and where pullback of tangent vectors is defined since $\gamma$ is a diffeomorphism.  The map $\U\to \J$ is thus $\G$-equivariant, and the form $\omega_{\U}$ on $\U$ is $\G$-invariant.
	
	We next calculate the infinitesimal action, to show that this is a holomorphic action. The Lie algebra of $\G$ is naturally identified with the space $C^\infty_0(M,\mathbb{R})$ of smooth functions on $M$ with integral zero. The vector field on $\J$ associated with a function \(f\in C^\infty_0(M,\mathbb{R})\) is defined as follows. Firstly, \(f\) generates a Hamiltonian vector field \(X_f\) on \(M\) defined by $$df=\omega(-,X_f).$$ Let \(\Phi^f_t\) be the time-\(t\) flow of \(X_f\), which is a diffeomorphism of \(M\) to itself. Using this flow, we can define a vector field \(A_f\) on \(\mathcal{J}\) by \[A_f|_J:=\left.\frac{d}{dt}\right|_{t=0}(\Phi_{-t}^f)^*J=-\mathcal{L}_{X_f}J.\] The vector field on $\U$ associated to \(f\in C^\infty_0(M,\mathbb{R})\) is then $A_f + X_f$.
	
	\begin{lemma}\label{lem:hol_action}
	The action of $\G$ on $\U$ is holomorphic.
	\end{lemma}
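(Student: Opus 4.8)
The plan is to show that the vector field $A_f + X_f$ on $\mathcal{U}$ associated to each $f \in C^\infty_0(M,\mathbb{R})$ is a real holomorphic vector field, i.e. that its flow preserves the almost complex structure $\mathbb{J}^{\,\mathcal{U}}$; equivalently that the Lie derivative $\mathcal{L}_{A_f + X_f}\mathbb{J}^{\,\mathcal{U}}$ vanishes. Since the $\mathcal{G}$-action is already smooth and the vector fields generate it, infinitesimal invariance of the almost complex structure is the same as holomorphicity of the action. Because $\mathbb{J}^{\,\mathcal{U}}_{(J,x)}(A,X) = (\mathbb{J}^{\mathcal{J}}_J(A), J_x X)$ splits into a base part and a fibre part, I would verify holomorphicity in two pieces: the horizontal/base direction and the vertical/fibre direction.

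First I would treat the base. The projection $\pi : \mathcal{U} \to \mathcal{J}$ is holomorphic and $\mathcal{G}$-equivariant, and the induced vector field on $\mathcal{J}$ is $A_f = -\mathcal{L}_{X_f}J$. So it suffices to check that $A_f$ is a real holomorphic vector field on the complex Fr\'echet manifold $\mathcal{J}$, i.e. that $\mathcal{L}_{A_f}\mathbb{J}^{\mathcal{J}} = 0$. This is essentially the statement that the $\mathcal{G}$-action on $\mathcal{J}$ preserves its complex structure, which is classical (it is the starting point of the Donaldson--Fujiki picture): the flow $(\Phi^f_{-t})^*$ acts on $\mathcal{J}$ by a family of biholomorphisms of $(M,\omega)$ pulled back, and pullback by a symplectomorphism commutes with the endomorphism-composition defining $\mathbb{J}^{\mathcal{J}}_J(A) = JA$. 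Concretely, for $g \in \mathcal{G}$ the map $J \mapsto g\cdot J$ satisfies $(g\cdot J)(g\cdot A) = (g\cdot J)(g\cdot A)$ respecting $\mathbb{J}^{\mathcal{J}}$ because conjugation by $g_*$ is complex-linear in the relevant sense; differentiating at $t=0$ gives $\mathcal{L}_{A_f}\mathbb{J}^{\mathcal{J}} = 0$.

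The more delicate part is the vertical/fibre direction, where I must show the flow respects the fibrewise almost complex structure $J_x$ while simultaneously moving the base point $J$. The key identity to exploit is the one already recorded in the construction, namely that the tangent vector $A_f = -\mathcal{L}_{X_f}J$ is precisely the infinitesimal change of $J$ under the Hamiltonian flow, so that moving along $A_f$ in the base exactly compensates for the change of $J$ as one flows $x$ by $\Phi^f_t$. Spelling this out, I would compute $\mathcal{L}_{A_f + X_f}\mathbb{J}^{\,\mathcal{U}}$ applied to a tangent vector $(B, Y) \in T_{(J,x)}\mathcal{U}$ and show the base and fibre contributions cancel: the term coming from differentiating $J_x$ in the fibre slot against $X_f$ is $(\mathcal{L}_{X_f}J)_x$, while the term coming from the base motion $A_f = -\mathcal{L}_{X_f}J$ feeding into the fibre slot is its negative, so they cancel and the remaining terms assemble into $\mathbb{J}^{\,\mathcal{U}}$ acting on $\mathcal{L}_{A_f+X_f}(B,Y)$. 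I expect the bookkeeping of how $A_f$ enters the fibre component of $\mathbb{J}^{\,\mathcal{U}}$ to be the main obstacle, since the almost complex structure on $\mathcal{U}$ couples base and fibre and one must be careful that the off-diagonal terms vanish.

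Alternatively, and perhaps more cleanly, I would argue at the level of the global action rather than infinitesimally: show directly that each $g \in \mathcal{G}$ acts on $(\mathcal{U}, \mathbb{J}^{\,\mathcal{U}})$ by a biholomorphism. For $(J,x)$ and $(A,X) \in T_{(J,x)}\mathcal{U}$, one computes $dg$ and checks $dg \circ \mathbb{J}^{\,\mathcal{U}} = \mathbb{J}^{\,\mathcal{U}} \circ dg$ using that $g$ is a symplectomorphism and that the fibre complex structure transforms as $(g\cdot J)_{g(x)} = g_* \circ J_x \circ g^*$. This reduces the claim to the compatibility $dg(\mathbb{J}^{\mathcal{J}}_J A, J_x X) = (\mathbb{J}^{\mathcal{J}}_{g\cdot J}(g\cdot A), (g\cdot J)_{g(x)}(g_* X))$, which follows by unwinding the definitions of the $\mathcal{G}$-actions on $\mathcal{J}$ and on $M$ together with complex-linearity of pushforward by $g_*$. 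Holomorphicity of the action is then immediate, and this avoids the infinitesimal cancellation entirely; I would present this global computation as the main line of proof, with the infinitesimal version as a remark.
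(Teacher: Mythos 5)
Your proposal is correct, and it actually contains two routes: your ``infinitesimal version'' is precisely the paper's proof, while your preferred global computation is a genuinely different and arguably more elementary argument. The paper works infinitesimally: it splits $\mathbb{J}^{\,\mathcal{U}} = \mathbb{J}^{\mathcal{J}} + \mathbb{J}^{\mathcal{V}}$, kills $\mathcal{L}_{A_f}\mathbb{J}^{\mathcal{J}}$ using holomorphicity of the $\mathcal{G}$-action on $\mathcal{J}$ and $\mathcal{L}_{X_f}\mathbb{J}^{\mathcal{J}}$ using constancy of $\mathbb{J}^{\mathcal{J}}$ in the fibre directions, and then establishes exactly the cancellation you identified in the fibre slot, via $\mathcal{L}_{A_f}\mathbb{J}^{\mathcal{V}}|_J = \tfrac{d}{dt}\big|_{t=0}(\Phi^f_{-t})^*J = -\mathcal{L}_{X_f}J = -\mathcal{L}_{X_f}\mathbb{J}^{\mathcal{V}}|_J$, so the two vertical contributions sum to zero. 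Your main line instead checks that each $g\in\mathcal{G}$ satisfies $dg\circ\mathbb{J}^{\,\mathcal{U}} = \mathbb{J}^{\,\mathcal{U}}\circ dg$: since the action is the product action $(J,x)\mapsto(g\cdot J,g(x))$, its differential is block diagonal, $(A,X)\mapsto(g_*Ag^*,\,g_*X)$, and compatibility reduces to the conjugation identities $(g_*Jg^*)(g_*Ag^*)=g_*(JA)g^*$ on the base and $(g_*Jg^*)_{g(x)}(g_*X)=g_*(J_xX)$ on the fibre, both immediate from the definitions; invariance of $\mathbb{J}^{\,\mathcal{U}}$ under the flow $\Phi^f_t$ then differentiates to the vanishing Lie derivative, which is the statement the paper needs downstream for the moment map. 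What your approach buys is that the off-diagonal bookkeeping you were worried about never arises (the differential of the action is manifestly block diagonal, so base and fibre decouple completely); what the paper's approach buys is that it produces the infinitesimal statement directly, without passing through global invariance. Two cosmetic corrections to your write-up: the displayed identity ``$(g\cdot J)(g\cdot A) = (g\cdot J)(g\cdot A)$'' is a tautological typo --- you mean $\mathbb{J}^{\mathcal{J}}_{g\cdot J}(g\cdot A) = g\cdot(\mathbb{J}^{\mathcal{J}}_J A)$, i.e. $g_*Jg^*\,g_*Ag^* = g_*(JA)g^*$ --- and the operative mechanism is not ``complex-linearity of $g_*$'' but the fact that conjugation by $g$ intertwines left-composition by $J$ with left-composition by $g\cdot J$; neither issue affects the validity of the computation you outline.
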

	
	\begin{proof}
	We must show the vector field $A_f + X_f$ on $\U$ generated by  \(f\in C^\infty_0(M,\mathbb{R})\) is holomorphic. In other words, we must show that the Lie derivative of \(\mathbb{J}^{\mathcal{U}}=\mathbb{J}^{\mathcal{J}}+\mathbb{J}^{\scV}\) along this vector field vanishes. We calculate \begin{align*}
	&\mathcal{L}_{A_f+X_f}(\mathbb{J}^{\mathcal{J}}+\mathbb{J}^{\scV}), \\
	=& \mathcal{L}_{A_f}(\mathbb{J}^{\mathcal{J}})+\mathcal{L}_{A_f}(\mathbb{J}^{\scV})+\mathcal{L}_{X_f}(\mathbb{J}^{\mathcal{J}})+\mathcal{L}_{X_f}(\mathbb{J}^{\scV}),  \\
	=& \mathcal{L}_{A_f}(\mathbb{J}^{\scV})+\mathcal{L}_{X_f}(\mathbb{J}^{\scV}).
	\end{align*} Here we use that the action of \(\mathcal{G}\) on \(\mathcal{J}\) is holomorphic to obtain \(\mathcal{L}_{A_f}(\mathbb{J}^{\mathcal{J}})=0\) (using the definition of the infinitesimal action and the complex structure $\mathbb{J}^{\mathcal{J}}$), and that \(\mathbb{J}^{\mathcal{J}}\) is constant in the vertical directions to obtain \(\mathcal{L}_{X_f}(\mathbb{J}^{\mathcal{J}})=0\).
	
	Now, \(A_f\) is the vector field associated to the one-parameter family of diffeomorphisms \(\Psi_t^f\) of \(\J\), where \[\Psi_t^f(J):=(\Phi_{-t}^f)^*J.\] Hence \[\mathcal{L}_{A_f}(\mathbb{J}^{\scV})|_J:=\frac{d}{dt}\bigg |_{t=0}\mathbb{J}^{\scV}|_{(\Phi_{-t}^f)^*J}=\frac{d}{dt}\bigg |_{t=0}(\Phi_{-t}^f)^*J=-\mathcal{L}_{X_f}J=-\mathcal{L}_{X_f}\mathbb{J}^{\scV}|_J.\] Thus \(\mathcal{L}_{A_f}(\mathbb{J}^{\scV})+\mathcal{L}_{X_f}(\mathbb{J}^{\scV})=0\), proving the result.	
	\end{proof}

Thus both the $\bar\partial$-operator and Hermitian metric (induced by $\omega_{\U}$) on $\scV$ are compatible with the $\G$-action. It follows that the Chern connection, and hence its curvature, are also $\G$-invariant.

\subsection{Equivariant differential geometry of almost K\"ahler manifolds}

Our next goal is to understand the equivariant representatives of the Chern characters of the almost holomorphic tangent bundle of an almost K\"ahler manifold. We will then apply a similar idea to Corollary \ref{equivariant-chern-weil-relative} in order to obtain a version of this statement for an almost holomorphic submersion, where we only assume that the relative dimension is finite. These will then be applied to the almost holomorphic submersion $\pi:(\U,\omega_{\U}) \to \J(M,\omega)$.

We begin by considering an almost K\"ahler manifold $(M,J,\omega)$, endowed with an almost holomorphic action of a compact Lie group $K$, which fixes $\omega$ and admits a moment map $\mu: M \to \mfk^*$. Denote by $TM^{1,0}$ the almost holomorphic tangent bundle, which admits a Chern connection defined by Equation \eqref{chern-connection}. We let $D^{TM}, D^{TM_{\C}}$ and $D$ denote the Chern connections on the smooth tangent bundle, complexified tangent bundle and almost holomorphic tangent bundle respectively. Denote by $g$ the Hermitian metric on $TM^{1,0}$ induced by the almost K\"ahler metric $\omega$.

\begin{proposition}\label{prop:almost-kahler-moment} A moment map for the $K$-action on $TM^{1,0}$ is given by $$\langle \sigma, v\rangle = g^{-1}(i\db\d \langle \mu, v\rangle),$$ where $\db:\scA^0(T^*M^{1,0})\to\scA^{0,1}(T^*M^{1,0})$ is the $\db$-operator of $T^*M^{1,0}$.

\end{proposition}

\begin{remark}\label{rem:almost-hol-db-convention}
Unlike in the K{\"a}hler case, we are somewhat forced to deal with the $\d$ and $\db$ operators on differential forms, since these are the more fundamental operators in this setting. In particular, the operator $$\db:\scA^0(T^*M^{1,0})\to\scA^{0,1}(T^*M^{1,0})$$ of Proposition \ref{prop:almost-kahler-moment} is defined as $$\db \alpha := p(\pi_{(1,1)}d\alpha),$$  where $\pi_{(1,1)}$ is projection to the $(1,1)$-forms, and $p$ is the projection $$(T^*M^{1,0}\otimes T^*M^{0,1})\oplus (T^*M^{0,1}\otimes T^*M^{1,0})\to T^*M^{0,1}\otimes T^*M^{1,0};$$ note that $(\pi_{(1,1)}\circ d)|_{\Omega^{(1,0)}(M)}$ is the $\db$-operator on exterior forms.

The $\d$-operator on $T^*M^{0,1}$ is defined similarly. These definitions will again imply $g^{-1}(i\d\db h) = g^{-1}(i\db\d h)$, as per Remark \ref{rem:db-convention} in the K{\"a}hler case; one need simply bear in mind that the operators $\d$ and $\db$ here produce sections of the appropriate tensor product bundles, and not exterior forms.
\end{remark}

\begin{proof}

In comparison with Proposition \ref{equivariant-chern-weil},  we do not assume that $J$ is integrable, hence the Chern connection on $TM$ no longer agrees with the Levi--Civita connection. In particular, the Chern connection has torsion, which is given by the Nijenhuis tensor: \begin{equation}\label{torsion}D^{TM}_XY - D^{TM}_YX  - [X,Y] = -\frac{1}{4}N_J(X,Y),\end{equation} where $N_J(X,Y) = [X,Y] + J([JX, Y] + [X,JY]) - [JX,JY].$ The $K$-action has infinitesimal lift to $TM$ given by the Lie derivative, meaning $\langle\sigma^{TM},v\rangle = D^{TM}_v - \L_v $ by Equation \ref{moment-map-formula-explicit}, where $\sigma^{TM}$ is the moment map for the $K$-action on $TM$. It follows that $$\left\langle\sigma^{TM},v\right\rangle = D^{TM}v - \frac{1}{4}\iota_vN_J,$$ where we view $\iota_vN_J = N_J(v,-)$ as a section of $\End TM$.

The Chern connection on $TM$ induces the Chern connections on $TM_{\C}$ and on $TM^{1,0}$; since the Chern connection preserves $J$, it preserves types of forms, meaning that $(R^{TM_{\C}})^{\End TM^{1,0}}$, the $\End TM^{1,0}$-component of the curvature, is simply the curvature of the Chern connection on $TM^{1,0}$. Thus the moment map for the Chern connection on $TM^{1,0}$ is given by $$\langle \sigma, v\rangle = \left(D^{TM_{\C}}v - \frac{1}{4}\iota_vN_J\right)^{\End TM^{1,0}}.$$ 

Note that the $\End TM^{1,0}$-component of $\iota_vN_J$ actually vanishes: writing $v = v^{1,0} + v^{0,1}$, for $Y$ a section of $TM^{1,0}$ the Nijenhuis tensor satisfies $N_J(v^{1,0},Y)=[v^{1,0},Y]^{(0,1)} \in \scA^0(TM^{0,1})$, while $N_J(v^{0,1},Y)$ vanishes in general. For the remaining term, since again the Chern connection preserves type, $(D^{TM_{\C}}v)^{\End TM^{1,0}} = Dv^{1,0}$. Thus the moment map $\mu$ is simply $$\langle \mu, v\rangle = Dv^{1,0},$$ viewed through the induced section  of $\End TM^{1,0}$ (noting that for $X\in\scA^0(TM^{1,0})$, that $D_Xv^{1,0} \in \scA^0(TM^{1,0})$ holds also in the almost K\"ahler setting).

We next use our assumption that  $v = J\nabla h$ for $h = \langle \mu, v\rangle$, so that $v^{1,0} = g^{-1}(i\bar\partial h)$. We claim that $Dv^{1,0} = D(g^{-1}(i\bar\partial h))= g^{-1}(i\partial\bar\partial \langle \mu, v\rangle)$, similarly to the integrable case. Since the Chern connection is compatible with the metric, we have $$ Dg^{-1}(i\bar\partial h) = g^{-1}\left(\bar{D}(i\bar\partial h)\right),$$ however relating the Chern connection on forms to the differential now involves torsion, hence the Nijenhuis tensor. We thus go through the argument in detail. 

As we view $ g^{-1}\left(\bar{D}(i\bar\partial h)\right)$ as inducing a section of $\End TM^{1,0}$, we may fix sections $X,Y\in \scA^0(TM^{1,0})$ and calculate 
\begin{align*}
g(D_Yg^{-1}(i\db h),\bar{X}) &= g(g^{-1}(\overline{D_{\bar{Y}}(-i\d h)}), \bar{X}) \\
&= \langle \overline{D_{\bar{Y}}(-i\d h)}, \bar{X}\rangle \\
&= \overline{\langle D_{\bar{Y}}(-i\d h), X\rangle}.
\end{align*}
By Equation \ref{torsion} and a general formula relating the connection to the differential for connections with torsion, for a general one-form $\beta$ and sections $X,Y$ of $TM^{1,0}$ we have $$(d\beta)(X,\bar Y) =  \left\langle D_X \beta, \bar Y\right\rangle - \left\langle D_{\bar Y} \beta, X\right\rangle -\frac{1}{4} \beta(N_J(X,\bar Y)).$$ In our situation $\beta = -i\d h$, so this implies $$\langle D_{\bar{Y}}(-i\bar\partial h), X\rangle = (id\partial h)(X,\bar Y),$$ since $N_J(X,\bar Y)$ and $\langle D_X\partial h, \bar{Y}\rangle$ both vanish. Since we take $X,Y\in \scA^0(TM^{1,0})$, we see $$(id\partial h)(X,\bar Y) = (i\db\d h)(X,\bar Y),$$ and so the induced sections of $\End TM^{1,0}$ satisfy $$g^{-1}\left(D(i\bar\partial h)\right) =g^{-1}(i\partial\bar\partial h)=g^{-1}(i\db\d h),$$ as per Remark \ref{rem:almost-hol-db-convention}.\end{proof}

For $M$ finite-dimensional, we thus obtain that the Chern--Weil representatives of the Chern characters of the almost holomorphic tangent bundle are given by $$\tilde \ch_j(TM^{1,0},\omega): v\to \tr\left(\frac{1}{j!}\left(\frac{i}{2\pi}(R + i\bar\partial\partial\langle\mu, v\rangle )\right)^j\right)$$ with $R$ the curvature of the Chern connection on $TM^{1,0}$; finite-dimensionality is used to take the trace, and hence to make sense of Chern--Weil theory. For the first Chern character, this produces the Chern--Ricci curvature, and this moment map equation for the Chern--Ricci curvature  has previously been proven by Lejmi using other techniques \cite[Lemma 2.6]{ML}. 

For applications, we require a relative version of this setup, where the relative dimension is finite but the dimensions of the base and total space are infinite. We thus consider an almost holomorphic submersion $\pi: (\U,\omega_{\U}) \to B$, with an action of a (possibly infinite-dimensional) Lie group $\mathcal G$ on $\U$ and $B$ making $\pi$ equivariant and such that $\omega_{\U}$ is a $\mathcal G$-invariant relatively almost K\"ahler metric, along with a moment map $\mu: \U \to \Lie \mathcal G^*$. We in addition assume that $B$ has an auxiliary $\G$-invariant almost K\"ahler metric.

\begin{remark}\label{aside}
Our main interest is in the space of almost complex structures $\J(M,\omega)$, which is known to admit a K\"ahler metric which is invariant under the action of the group of exact symplectomorphisms, meaning our arguments apply to this setting. In general, as our argument is local, if $\mathcal G$ is finite-dimensional we may choose a compatible almost K\"ahler metric in a given Fr\'echet chart in $B$ and average over the $\mathcal G$-action. In particular, if $\mathcal G$ is finite-dimensional, we do not need to assume $B$ admits an almost K\"ahler metric globally.
\end{remark}

We let $\V = (\ker d\pi) \subset T\U$, and let $\V^{1,0}$ denote the vertical almost holomorphic tangent bundle. The form $\omega_{\U}$ induces a Hermitian metric $g_{\V}$, and hence a Chern connection, on $\V^{1,0}$. We may thus ask for a moment map for the $\mathcal G$-action on $\V^{1,0}$ with respect to this connection.

\begin{proposition} \label{almost-kahler-CW-relative}
A moment map $\sigma$ for the $\mathcal G$-action on $\V^{1,0}$ is given by $$\langle \sigma, v\rangle = g_{\V}^{-1} i\db_{\V}\d_{\V} (\langle\mu, v\rangle).$$
\end{proposition}

\begin{proof}

The moment map equality $$\iota_v R = -D \langle \sigma, v\rangle$$ is an equality of sections of $\scA^1(\End \V^{1,0})$, so to prove it we may choose a point $b \in B$, a tangent vector lying in $T_bB$ and an element of the fibre of $\End \V^{1,0}$ over $b$. Thus although $B$ may be infinite-dimensional, we may choose a $k\gg 0$ such that $\omega_{\U}+k\pi^*{\omega_B}$ defines a Hermitian metric on $T\U^{1,0}$ in all directions that enter into the proof (noting that infinite-dimensionality prevents this argument over all of $\U$).  Thus the same argument as Corollary \ref{equivariant-chern-weil-relative} applies to prove our result, using Proposition \ref{prop:almost-kahler-moment}. \end{proof}
\subsection{Moment maps on the space of almost complex structures}  Our next goal is to prove that the $Z$-critical equation appears as a moment map on the \emph{infinite-dimensional} space $\J(M,\omega)$, in particular recovering the moment map property of the scalar curvature. We denote by $Z(M,\alpha)$ a fixed central charge; as its value is topological, we consider this as fixed independently of $J \in \J(M,\omega)$. 

In order to state our general moment map property, we extend the definition of the $Z$-critical equation to the almost K\"ahler setting: the appropriate definition is identical to the integrable setting, namely Definition \ref{defpde}, where we use the Hermitian metric on the almost holomorphic tangent bundle induced by the almost K\"ahler metric $\omega$ to produce the Chern connection, and hence to take curvature.

\begin{theorem}\label{thm:infinite_dim_manifold_moment_map} The map $\sigma:\J(M,\omega) \to C^{\infty}_0(M,\R)^*$ defined by $$\langle\sigma(J),f\rangle := \int_M f\Ima(e^{-i\phi(M,\alpha)} \tilde Z(M,J))\,\omega^n$$ is a moment map for the $\G$-action on $(\J(M,\omega), \Omega_Z)$. 
\end{theorem}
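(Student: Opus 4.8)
The plan is to adapt the finite-dimensional argument of Theorem \ref{thm:fin_dim_manifold_moment_map} to the universal family $\pi\colon\U\to\J^{\inte}(M,\omega)$, using the structures assembled in the preceding subsections: the relatively almost K\"ahler form $\omega_{\U}$, the Chern connection $D^{\scV}$ on the vertical tangent bundle $\scV$ with its curvature $R_{\scV}$, and the holomorphic $\G$-action. The key conceptual point is that all the ingredients of the finite-dimensional proof now exist in the infinite-dimensional setting: $\pi$ is a proper ($=$ fibrewise compact) $\G$-equivariant holomorphic submersion, $R_{\scV}$ is $\G$-invariant, and—crucially—the $\G$-action is Hamiltonian with an explicit moment map. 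Indeed, the analogue of $\mu\colon X\to\mfk^*$ here should be $\mu_{\U}\colon\U\to\Lie(\G)^*=C^\infty_0(M,\R)^*$ sending $(J,x)\mapsto f\mapsto f(x)$ (up to a normalising constant), so that $\langle\mu_{\U},f\rangle$ is the function $f$ itself on each fibre, which is manifestly the Hamiltonian for $X_f$ with respect to $\omega_{\U}$; one checks $d\langle\mu_{\U},f\rangle=-\iota_{A_f+X_f}\omega_{\U}$ using that $\omega_{\U}$ is pulled back from $M$ and that $A_f$ is vertical-valued.

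First I would verify that Corollary \ref{equivariant-chern-weil-relative} applies in this setting, producing the equivariant Chern--Weil representatives $\widetilde\ch_{k,\eq}(\scV,\omega_{\U})$, with the moment map for the $\G$-action on $\scV$ given by $g_{\scV}^{-1}i\db_{\scV}\d_{\scV}\langle\mu_{\U},f\rangle$. Granting this, the entire formal skeleton of the finite-dimensional proof transfers verbatim: for each term of the central charge one forms the equivariantly closed $(n{+}1,n{+}1)$-form $\tfrac{1}{j+1}(\omega_{\U}+\mu_{\U})^{j+1}\wedge\widetilde\ch_{k_1,\eq}\wedge\cdots\wedge\widetilde\ch_{k_r,\eq}$ on $\U$, applies Lemma \ref{lemma:fibre} to fibre-integrate it to an equivariantly closed form on $\J^{\inte}$, and splits the result into the closed $(1,1)$-form $\Omega_Z$ and a function of $J$. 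The integration-by-parts computation that converts the $H_f$-linear term into $\d_b^*\db_b^*(\tilde\ell_m^\flat)$ is purely fibrewise—carried out on the compact manifold $(M,J)$—and so is identical to the finite-dimensional case, yielding exactly $\langle\sigma(J),f\rangle=\int_M f\,\Ima(e^{-i\phi}\tilde Z(M,J))\,\omega^n$.

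The main obstacle, flagged already in the excerpt, is \emph{rigour} rather than formula-chasing: $\J^{\inte}(M,\omega)$ is not a Fr\'echet manifold and may be singular, so one cannot freely use holomorphic coordinates on the base to justify that $R_{\scV}$ is a genuine $(1,1)$-form and that the equivariant Chern--Weil machinery (which was stated in Section \ref{sec:equiv-cw} for honest manifolds) goes through. The resolution I would pursue is the one the paper advertises—work on the ambient \emph{smooth} Fr\'echet manifold $\J$, where $\U=\J\times M$ is a bona fide holomorphic submersion and all constructions are valid, carry out the equivariant Chern--Weil calculation there, and then restrict the resulting equivariantly closed form to $\J^{\inte}$. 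The delicate point is that integrability is what forces $R_{\scV}$ to have the right bidegree (so the fibrewise Chern characters represent the correct classes) and makes $\omega_{\U}$ restrict to a genuine K\"ahler metric; off $\J^{\inte}$ these fail. Thus one must argue that the relevant identities hold \emph{along} $\J^{\inte}$, which is where the ``almost holomorphic coordinates'' of Section \ref{sec:ac-structures} enter: they let one differentiate in base directions tangent to $\J^{\inte}$ and control the bidegree of $R_{\scV}$ to the order needed, mimicking the normal-coordinate computation of Proposition \ref{equivariant-chern-weil} without a genuine complex chart on the base.

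Finally I would confirm equivariance of $\sigma$ as a formal consequence of the equivariance built into Corollary \ref{equivariant-chern-weil-relative} and the $\G$-invariance of $R_{\scV}$ established via Lemma \ref{lem:hol_action}, and check that the $\G$-invariance of $\omega_{\U}$ guarantees $\Omega_Z$ is $\G$-invariant and closed, so that the moment map identity $\iota_{A_f}\Omega_Z=-d\langle\sigma,f\rangle$ is precisely the content of the equivariant closedness of the fibre-integrated form. Specialising the central charge to $Z=i\int\alpha^n-\int c_1\cdot\alpha^{n-1}$ then recovers the Donaldson--Fujiki moment map property of the scalar curvature as a corollary.
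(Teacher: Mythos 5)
Your overall skeleton---the universal family, the tautological Hamiltonian $\langle\mu_{\U},f\rangle=f$, equivariant Chern--Weil representatives for $\scV$, fibre integration, and the fibrewise integration by parts---matches the paper, and you correctly flag that the singularity of $\J^{\inte}(M,\omega)$ is the crux. But your proposed resolution has a genuine gap. You suggest carrying out the equivariant Chern--Weil calculation on the ambient Fr\'echet manifold $\J$ and then restricting the resulting equivariantly closed forms to $\J^{\inte}$. The problem is that the identity which makes those forms equivariantly closed with the \emph{explicit} moment map, namely
$$\iota_{A_f+X_f}R_{\scV} = -D_{\End\scV}\left(g_{\scV}^{-1}i\db_{\scV}\d_{\scV} f\right),$$
is simply false at non-integrable points: its proof (Proposition \ref{equivariant-chern-weil}) is a fibrewise computation in holomorphic normal coordinates, which do not exist on a non-integrable fibre, and the Chern curvature acquires Nijenhuis terms there. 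So there is nothing on $\J$ to restrict; one must prove the identity \emph{along} $\J^{\inte}$, and since equivariant closedness involves first derivatives in base directions, one needs it to hold to second order near integrable points---which is exactly what your sketch does not supply.

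The missing ingredient is the paper's finite-dimensional reduction, Proposition \ref{finite-dim-reduction}. To verify the moment map identity at $J$ in a direction $B\in T_J\J^{\inte}$, one wants a two-dimensional \emph{holomorphic} family through $J$ tangent to $A_f'$ and $B'$, along which the $\G$-direction is holomorphic. Two obstructions arise: (i) such a family cannot be taken inside $\J^{\inte}$ (it may be singular), so its fibres are only approximately integrable, $N_b=\O(|b|^2)$---this is why the machinery of Section \ref{sec:almostholom} is formulated for almost-holomorphic \emph{families}, not merely for ``directions tangent to $\J^{\inte}$''; and (ii) more seriously, $A_f$ cannot be complexified: the flow of $\mathbb{J}^{\J}A_f$ does not exist in general \cite{BBLU}, so no holomorphic family is generated by the $\G$-action at all. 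The paper resolves (ii) by truncating the Taylor expansion of the \emph{real} flow of $A_f$ at third order and extending the truncation holomorphically in $z=s+it$, producing a family with $\mathcal{L}_{\d_s\sigma+X_f}(\tilde\sigma^*\mathbb{J}^{\,\U})=\O(|z|^2)$---just enough, by Remark \ref{rem:almost-hol-vector-field}, for the curvature computation to survive on the central fibre; genericity of the hypotheses then gives the identity on all of $\U^{\inte}$. This approximation step is the heart of the infinite-dimensional proof; without it, or some substitute, your argument does not close. A side benefit of the reduction is that all Chern--Weil theory is confined to finite-dimensional families, so one never has to justify the Berline--Getzler--Vergne equivariant theory on a Fr\'echet manifold, something your plan would additionally require.
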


Here the closed $(1,1)$-form $\Omega_Z$ on $\J(M,\omega)$ is induced by the central charge $Z$ and defined as a fibre integral in the same way as Definition \ref{def:omegaz}. The group $\G$ is the group of exact symplectomorphisms of $(M,\omega)$. 

As a special case, that we recover the moment map property of the scalar curvature in the integrable setting follows from the equality of Weil--Petersson form we use and the $L^2$-metric on $\J(M,\omega)$ \cite[Theorem 10.3]{FS-moduli} (see also Fujiki \cite[Theorem 8.2]{AF}). We note that it is not currently known that the Weil--Petersson form we use agrees with the Donaldson--Fujiki form in the non-integrable setting, though we expect this to be the case.

\begin{proof}[Proof of Theorem \ref{thm:infinite_dim_manifold_moment_map}]

	The equivariance condition of the moment map  follows from the same arguments as the equivariance part of Theorem \ref{thm:fin_dim_manifold_moment_map}. Thus we must only prove the moment map equation. The equivariant Chern--Weil representatives of the relative tangent bundle $\V^{1,0}\subset T\U^{1,0}$ are provided by Proposition \ref{almost-kahler-CW-relative}. This is the key step of the proof in the integrable case, namely Theorem \ref{thm:fin_dim_manifold_moment_map}, and the rest of the argument goes through in the same manner.	 \end{proof}

This proof also applies to general families of almost K\"ahler manifolds over a finite-dimensional base (whose almost complex structure may not be integrable), much as in the integrable case of Theorem \ref{thm:fin_dim_manifold_moment_map}. We use the analogous notation to that setting, so that $(X,\omega) \to B$ is a $K$-equivariant almost holomorphic submersion between finite-dimensional almost complex manifolds, with $\omega\in \alpha$ a $K$-invariant relatively almost K\"ahler metric with moment map $\mu: X \to \mfk^*$, and $Z$ is a choice of central charge inducing a closed form $\Omega$ on $B$ (not necessarily of type $(1,1)$) and with phase $\phi_b(X_b,\alpha_b)$. $\tilde Z(X_b,\omega_b)$ is then computed on the fibre $(X_b,\omega_b)$ as throughout.

\begin{theorem}
The map $\sigma_Z: B \to \mfk^*$ defined by  $$\langle \sigma_Z(b), v\rangle := \int_{X_b} \langle \mu, v\rangle|_b \Ima(e^{-i\phi(X_b,\alpha_b)}\tilde Z(X_b, \omega_b))\omega_b^n$$ for \(v\in\mathfrak{k}\), is a moment map for the $K$-action on $(B,\Omega_Z)$. \end{theorem}
        
		\begin{remark}
		
		 This moment map result also applies to families over general infinite-dimensional bases, as long as one can apply Remark \ref{aside}, namely if the base admits a $\mathcal G$-invariant almost K\"ahler metric. As a special case, the moment map interpretation of the Hermitian scalar curvature also applies to families of almost K\"ahler manifolds over arbitrary finite-dimensional almost complex bases, generalising Theorem \ref{integrable-scalar-curvature}.
		\end{remark}
		
\section{$Z$-critical connections as moment maps}\label{sec:Z-critical-connections}
\subsection{The space of unitary connections}

We now turn to the vector bundle setting, where our first aim is to describe the geometry of the space of unitary connections on a fixed Hermitian vector bundle over a compact  complex manifold; we denote by $\scA(E,h)$ the set of unitary connections on the Hermitian vector bundle  $(E,h)$  over $X$. A reference for the basic theory of the space of unitary connections is Sektnan \cite[Chapter 2]{LS}.

We wish to endow $\scA(E,h)$  with further structure; proofs of all results stated can be found in \cite[Chapter 2]{LS}. Firstly, the space of unitary connections is an \emph{affine space} modelled on $\Omega^1(\End_{SH}(E,h)),$ where $\End_{SH}(E,h)$ denotes skew-Hermitian endomorphisms. This is a real affine space on which we construct an almost complex structure as follows. At any connection $A \in\scA(E,h)$ the tangent space at $A$ satisfies $T_{A} \scA(E,h) \cong \Omega^1(\End_{SH}(E,h)),$ and we define a  complex structure by decomposing an element $a \in T_{A} \scA(E,h)$ as $a = a^{1,0} + a^{0,1}$ and setting $$\mathbb J(a) = -ia^{1,0} + ia^{0,1}.$$  One checks that $\mathbb J(a) \in \Omega^1(\End_{SH}(E,h))$ and that $\mathbb J^2 = -\Id$, defining an almost complex structure on $\scA(E,h)$. One further checks that the almost complex structure is formally integrable in the sense that its Nijenhuis tensor vanishes.

We next define the analogue of the universal family over $\scA(E,h),$ which is a Hermitian complex vector bundle with a unitary universal connection. Consider the product manifold $\scA(E,h) \times X$. Pulling back the Hermitian vector bundle $(E,h)$ gives a Hermitian vector bundle $(\E,h_{\E})$ over $\scA(E,h) \times X$. We endow $(\E,h_{\E})$ with a universal unitary connection $D_{\E}$ using the splitting of the tangent bundle on  $\scA(E,h) \times X$ as follows. For $(u,v) \in T_{(A, x)}(\scA(E,h) \times X)$ and \(s\in\scA^0(\E)\) we set $$(D_{\E}s)(u,v) = (ds)(u) + (D_A s)(v),$$ where \(d\) is the usual exterior derivative on $\scA(E,h)$ extended to the bundle \(\mathcal{E}\) which is trivial in the \(\scA(E,h)\) directions. Thus on every submanifold $\{A\}\times X \subset \scA(E,h) \times X,$  the connection restricts to the connection $A$ on $E$ itself.  The connection \(D_{\E}\) is easily seen to be unitary, since each $A$ is unitary and the Hermitian metric on $\E$ is the pullback of  the Hermitian metric $h$ on $E$ itself.
 
Denote by $\G$ the \emph{gauge group} of \((E,h)\), namely the group of sections of $\End E$ which are unitary on each fibre. The Lie algebra of the gauge group is  given by $\Lie \G = \End_{SH}(E,h),$ namely skew-Hermitian endomorphisms of $E$. For $f \in \G$, the action of the gauge group on $\scA(E,h)$ is given by $$f\cdot A = f^{-1}\circ D_A \circ f.$$ Since the gauge group $\G$ acts on $E$, there is naturally an induced action on $\E$ in such a way that $\E$ is a $\G$-equivariant vector bundle on $\scA(E,h) \times X$, where $X$ is given the trivial $\G$-action. The Hermitian metric and the connection $D_{\E}$ on $\E$ are then $\G$-invariant.

\subsection{The Hermitian Yang--Mills condition as a moment map}

Using the universal vector bundle $\E\to \scA(E,h) \times X$, we next give a new proof of the moment map interpretation of the Hermitian Yang--Mills equation  (originally due to Atiyah--Bott \cite[Section 9]{AB}, see also \cite[Section 4]{SD-surfaces}). We begin by constructing a K\"ahler metric $\Omega$ on $\scA(E,h)$. For this, denote by $F_{D_{\E}} \in \scA^2(\E)$ the curvature of the universal connection $D_{\E}$ on $\E$.

\begin{definition} We define $\Omega \in \scA^{1,1}(\scA(E,h))$ to be the fibre integral $$\Omega = \int_{\scA(E,h) \times X/\scA(E,h)} \tr\left(\frac{1}{2!}\left(\frac{i}{2\pi}F_{D_{\E}}\right)^2\right) \wedge \omega^{n-1}.$$\end{definition}

Thus from general properties of fibre integrals, $\Omega$ is a closed, $\G$-invariant $(1,1)$-form on $\scA(E,h)$, with $\G$-invariance following from $\G$-invariance of the connection $D_{\E}$ itself. We next prove positivity.

\begin{proposition}\label{curvature-bundle} $\Omega$ is a K\"ahler metric satisfying $$\Omega(a,b) = -\frac{1}{8\pi^2}\int_X \tr(a\wedge b)\omega^{n-1}$$ for all \(a,b\in T_A\mathcal{A}(E,h)\cong\Omega^1(\End_{SH}(E,h))\). \end{proposition}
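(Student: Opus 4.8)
The plan is to compute the curvature $F_{D_\E}$ of the universal connection explicitly, decompose it according to the number of legs in the $\scA(E,h)$-directions versus the $X$-directions, and observe that only one term survives the fibre integral defining $\Omega$. Write $F_{D_\E} = F^{(0)} + F^{(1)} + F^{(2)}$, where $F^{(k)}$ has $k$ legs along $\scA(E,h)$ and $2-k$ legs along $X$. Since $D_\E$ is trivial (given by $d$) in the $\scA(E,h)$-directions and restricts to $A$ in the $X$-directions, computing $F_{D_\E} = d\Theta + \Theta\wedge\Theta$ from the connection form $\Theta$ gives: $F^{(0)} = F_A$, the fibrewise curvature; $F^{(2)} = 0$, since $\Theta$ has no component along $\scA(E,h)$; and $F^{(1)}$ is the \emph{tautological} mixed term, characterised by $F^{(1)}(a,\cdot) = a$ for a tangent vector $a \in T_A\scA(E,h) \cong \Omega^1(\End_{SH}(E,h))$. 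In other words, differentiating the tautological connection form in the $\scA(E,h)$-direction $a$ returns $a$ itself, viewed as an $\End E$-valued $1$-form on $X$. Identifying $F^{(1)}$ this way is the conceptual heart of the argument, and is where the universal connection does its work.

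Next I would substitute into the definition of $\Omega$. By a leg count, after wedging with $\omega^{n-1}$ and integrating over the fibre $X$, the only contribution to the $2$-form $\Omega$ on $\scA(E,h)$ comes from the component of $\tr\big(\tfrac{1}{2}(\tfrac{i}{2\pi}F_{D_\E})^2\big)$ carrying exactly two $\scA(E,h)$-legs, namely $\tr(F^{(1)}\wedge F^{(1)})$; the terms $\tr(F^{(0)}\wedge F^{(0)})$ and $\tr(F^{(0)}\wedge F^{(1)})$ carry too many $X$-legs to survive the wedge with $\omega^{n-1}$ on an $n$-fold. Evaluating $\tr(F^{(1)}\wedge F^{(1)})$ on tangent vectors $a,b$, the tautological description of $F^{(1)}$ together with the identity $\tr(b\wedge a) = -\tr(a\wedge b)$ for $\End E$-valued $1$-forms collapses the expression to $\tr(a\wedge b)$ up to a universal constant. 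Tracking the factor $\tfrac{1}{2}$, the scalar $(\tfrac{i}{2\pi})^2$, and the combinatorial factor arising when evaluating a $2$-form on a pair of vectors then yields the stated formula $\Omega(a,b) = -\frac{1}{8\pi^2}\int_X \tr(a\wedge b)\,\omega^{n-1}$.

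It remains to show $\Omega$ is a \emph{K\"ahler metric}, i.e. positive, since closedness and the $(1,1)$-type were recorded before the statement. For this I would set $b = \mathbb{J}a$ in the bilinear formula and argue pointwise. Writing $a = a^{1,0} + a^{0,1}$ and using $\mathbb{J}a = -ia^{1,0} + ia^{0,1}$, only the $(1,1)$-part of $a\wedge\mathbb{J}a$ survives the wedge with $\omega^{n-1}$, reducing the integrand to a multiple of $\tr(a^{1,0}\wedge a^{0,1})\wedge\omega^{n-1}$. By the K\"ahler identities at a point, this equals a positive multiple of $|a|^2\,\omega^n$, the positive-definiteness coming from $-\tr(SS) = \tr(SS^*) \geq 0$ for skew-Hermitian $S$. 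Hence $\Omega(a,\mathbb{J}a) > 0$ for $a \neq 0$. Compatibility $\Omega(\mathbb{J}a,\mathbb{J}b) = \Omega(a,b)$ is a quick consequence of the same formula, since $a\wedge\mathbb{J}b$ and $\mathbb{J}a\wedge\mathbb{J}b$ share the same $(1,1)$-part.

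I expect the main obstacle to be twofold: first, correctly isolating the mixed curvature term $F^{(1)}$ as the tautological form and justifying $F^{(2)} = 0$; and second, threading the sign and normalisation conventions — the fibre-integral orientation, the antisymmetrisation in evaluating a $2$-form, and the trace sign $\tr(a\wedge b) = -\tr(b\wedge a)$ — through the computation so as to land on the precise constant $-\frac{1}{8\pi^2}$ rather than a competing normalisation. The positivity step, by contrast, is a standard pointwise K\"ahler computation once the bilinear formula is established.
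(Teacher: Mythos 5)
Your proposal is correct and follows essentially the same route as the paper's proof: decompose $F_{D_{\E}}$ into purely horizontal, purely vertical and mixed parts, identify the mixed part as the tautological term $F^{(1)}(a,\cdot)=a$, and observe by a degree count that only $\tr(F^{(1)}\wedge F^{(1)})$ survives the fibre integral against $\omega^{n-1}$. The only difference is that you also spell out the positivity check $\Omega(a,\mathbb{J}a)>0$, which the paper's proof leaves implicit once the bilinear formula is established.
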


\begin{proof}

We first consider in more detail the curvature $F_{D_{\E}}$ of the connection $D_{\E}$. This is a $2$-form on $\scA(E,h) \times X$ with values in $\End \E$, and so we can decompose it using the splitting of the tangent bundle $T(\scA(E,h) \times X) \cong T\scA(E,h) \oplus TX$. Viewing $\scA(E,h) \times X$ as a submersion over $\scA(E,h)$ with fibre $X$, we decompose  $F_{D_{\E}}$ into purely horizontal, purely vertical and mixed components with respect to the splitting.

The purely horizontal component of \(F_{D_{\E}}\) vanishes, since \(D_{\E}\) is simply the exterior derivative in the horizontal directions. Since the vertical component of \(D_{\E}\) at \(A\in\mathcal{A}(E,h)\) is just \(D_A\), the purely vertical component of \(F_{D_{\E}}\) is \(F_A\) on the fibre over \(A\). Finally, the mixed component is computed as the horizontal exterior derivative of the connection \(D_A\). It is straightforward to see that this mixed component thus takes a tangent vector \(a\in\scA(E,h)\), and maps it to the vertical \(\End_{SH}(E,h)\)-valued 1-form given by identifying tangent vectors to \(\mathcal{A}(E,h)\) with \(\Omega^1(X,\End_{SH}(E,h))\).

Finally, in order to compute \(\Omega\), note that the fibre integrand must include two horizontal components in order not to vanish on the base. Thus, only the mixed components of the curvature are integrated. Since these take horizontal tangent vectors as input and map them to the corresponding elements of \(\Omega^1(X,\End E)\), we have \begin{align*}
	\Omega(a,b) &= -\frac{1}{8\pi^2}\int_{\scA(E,h) \times X/\scA(E,h)} \tr\left(F_{D_{\E}}(a,-) \wedge F_{D_{\E}}(b,-)\right) \wedge \omega^{n-1} \\
	&= -\frac{1}{8\pi^2}\int_{X} \tr\left(a \wedge b\right) \wedge \omega^{n-1}.\qedhere
\end{align*} \end{proof}
 
 \begin{remark} This argument also shows that $\Omega$ agrees with the K\"ahler metric used by Atiyah--Bott and Donaldson on $\scA(E,h)$; this has been shown by Donaldson in the case that $\omega \in c_1(L)$ for an ample line bundle $L$ using different ideas \cite[Proposition 14]{SD-infinite}. 
 \end{remark}

The next step is to calculate the \emph{equivariant} curvature of the universal connection on $\E$. We thus fix an element $e \in \scA^0(\End_{SH} (E,h)) = \Lie \G$, and we wish to calculate the associated section \(\tilde{e}\) of $\End \E$ such that $$\iota_e F_{D_{\E}} = -D_{\End \E}\tilde e.$$

\begin{proposition} The Hamiltonian for the curvature of $F_{D_{\E}}$  with respect to $e \in \scA^0(\End_{SH} (E,h)) = \Lie \G$ is $-e$, so that $$\iota_eF_{D_{\E}} = -D_{\End \E} \left(-e\right).$$\end{proposition}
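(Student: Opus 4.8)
The plan is to combine two ingredients that are already available: an explicit formula for the vector field on $\scA(E,h)$ generated by $e \in \Lie\G = \scA^0(\End_{SH}(E,h))$, and the decomposition of the universal curvature $F_{D_{\E}}$ into horizontal, vertical and mixed components established in the proof of Proposition \ref{curvature-bundle}. The upshot will be that both sides of the claimed identity reduce to the single expression $D_A e$ on the fibre over $A$.

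First I would compute the infinitesimal gauge action. Writing $f_t = \exp(te)$ and using the action $f\cdot A = f^{-1}\circ D_A \circ f$, the new connection differs from $D_A$ by the $\End E$-valued one-form $f^{-1}(D_{\End E}f)$; differentiating at $t=0$, and using $D_{\End E}\Id = 0$, gives velocity $D_{\End E}e = D_A e$. Thus the vector field on $\scA(E,h)$ generated by $e$ is $A \mapsto D_A e$, which is a skew-Hermitian-endomorphism-valued one-form, hence a genuine tangent vector at $A$ under the identification $T_A\scA(E,h) \cong \Omega^1(\End_{SH}(E,h))$. Since $X$ carries the trivial $\G$-action, the associated vector field on $\scA(E,h)\times X$ is purely horizontal.

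Next I would contract. By the proof of Proposition \ref{curvature-bundle}, the purely horizontal component of $F_{D_{\E}}$ vanishes and the purely vertical component is $F_A$, so contracting with the purely horizontal vector field corresponding to $D_A e$ leaves only the mixed component. That mixed component is tautological: it sends a horizontal tangent vector $a \in \Omega^1(X,\End_{SH}(E,h))$ to the corresponding vertical $\End E$-valued one-form. Hence, on the fibre over $A$, one obtains $\iota_e F_{D_{\E}} = D_A e$ as a vertical one-form. Finally I would identify the right-hand side: viewing $-e$ as the section of $\End\E$ that is constant in the $\scA(E,h)$-directions, its horizontal covariant derivative vanishes because $D_{\E}$ is the exterior derivative there, while its vertical covariant derivative over $A$ is $-D_A e$. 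Therefore $D_{\End\E}(-e)$ is the purely vertical form $-D_A e$, so that $-D_{\End\E}(-e) = D_A e = \iota_e F_{D_{\E}}$, giving the moment map identity with Hamiltonian $-e$.

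The only point requiring genuine care is the sign bookkeeping: one must verify that under the stated action convention the infinitesimal action is $+D_A e$ rather than $-D_A e$, and that the tautological mixed component introduces no extra sign, so that the contraction and the covariant derivative of $-e$ produce the \emph{same} one-form $D_A e$. Once these identifications are pinned down the equality is immediate, and it is precisely their agreement that forces the Hamiltonian to be $-e$ (compatibly with Definition \ref{def:vb_moment_map}).
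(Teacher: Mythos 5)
Your proof is correct and follows essentially the same route as the paper's: identify the fundamental vector field of $e$ at $A$ as $D_A e$, use the horizontal/vertical/mixed decomposition of $F_{D_{\E}}$ from Proposition \ref{curvature-bundle} so that contraction with the (purely horizontal) vector field picks out only the tautological mixed component, and observe that $D_{\End\E}e$ reduces to $D_A e$ because $e$ is constant in the $\scA(E,h)$-directions. The only difference is that you explicitly derive the infinitesimal gauge action $A\mapsto D_A e$ from the action $f\cdot A = f^{-1}\circ D_A\circ f$, a step the paper simply asserts.
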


\begin{proof}
	
	In the statement of the proposition we have identified the element \(e\) of the Lie algebra with its fundamental vector field. For clarity here we distinguish the two and denote by $v_e$ the vector field on $\scA(E,h)$ induced by $e$. The tangent space to $A$ in $A$ is given by $\Omega^1(\End_{SH}(E,h))$, and the value of the vector field $v_e$ at the point $A$  is $D_Ae$ (where \(D_A\) is extended to \(\End E\)). 
	
	We have seen that the contraction \(\iota_{v_e}F_{D_{\mathcal{E}}}\) of the curvature with a horizontal vector field converts the horizontal vector field to its associated vertical 1-form in \(\Omega^1(X,\End_{SH}(E,h))\). In particular, on the fibre over \(A\), \[\iota_{v_e}F_{D_{\mathcal{E}}}=D_Ae=d(e) + D_A e = D_{\End \E}e,\] where we used that \(e\) is constant in the horizontal directions. \end{proof}

The theory of equivariant curvature then produces the following.

\begin{corollary} 
For $e \in \scA^0(\End_{SH} (E,h))$ set $$\langle \mu, e\rangle = -e.$$ Then the equivariant Chern--Weil representatives of the Chern characters of $\E$ are for $e\in \Lie\G$ given by $$\widetilde \ch_{k,\eq}(\E,D_{\mathcal{E}})(e) = \tr\left(\frac{1}{k!} \left(\frac{i}{2\pi} (F_{D_{\E}} +\langle \mu, e\rangle)\right)^k\right).$$

\end{corollary}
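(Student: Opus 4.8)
The plan is to recognise that this is an immediate consequence of the preceding Proposition combined with the general equivariant Chern--Weil formalism recalled in Section \ref{sec:equiv-cw}, so that essentially no new computation is required. First I would observe that the preceding Proposition establishes exactly the moment map condition of Definition \ref{def:vb_moment_map}: setting $\langle \mu, e\rangle = -e$, it shows that $\iota_e F_{D_{\E}} = -D_{\End \E}\langle \mu, e\rangle$ for every $e \in \Lie\G = \scA^0(\End_{SH}(E,h))$. Together with the $\G$-invariance of the universal connection $D_{\E}$ (established during its construction) and the $\G$-equivariance of the assignment $e \mapsto -e$ (which follows since the gauge action on a fixed endomorphism is by conjugation, and $-\mathrm{Ad}(f)e = f(-e)f^{-1}$), this verifies that $\mu$ is a genuine moment map for the $\G$-action on $\E$ in the required sense.

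Second, I would simply invoke the definition of the equivariant Chern--Weil representatives from Section \ref{sec:equiv-cw}, which attaches to any invariant connection $A$ together with a moment map $\mu$ the equivariant forms $\widetilde\ch_{k,\eq}(E,A)(v) = \tr\left(\frac{1}{k!}\left(\frac{i}{2\pi}(F_A + \langle\mu,v\rangle)\right)^k\right)$, and which by \cite[p.~211]{BGV} are automatically equivariantly closed. Substituting $A = D_{\E}$, $v = e$, and $\langle\mu,e\rangle = -e$ yields precisely the asserted formula for $\widetilde\ch_{k,\eq}(\E,D_{\mathcal{E}})(e)$.

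The only point requiring genuine care—and the step I expect to be the main, if modest, obstacle—is that Section \ref{sec:equiv-cw} is formulated for a compact Lie group acting on a finite-dimensional base, whereas here the base $\scA(E,h)$ is an infinite-dimensional affine space and $\G$ is the gauge group. The hard part will therefore be justifying that the formalism transfers. I would address this by emphasising that all the objects entering the construction—the curvature $F_{D_{\E}}$, the fibrewise-constant endomorphism $\langle\mu,e\rangle = -e$, the fibrewise trace, and the covariant derivative $D_{\End\E}$—are honest tensors on the \emph{finite-rank} bundle $\E$, so that the Cartan-calculus verification of $d_{\eq}\widetilde\ch_{k,\eq}(\E,D_{\E}) = 0$ is a purely algebraic computation that goes through verbatim. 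Alternatively, one may restrict to finite-dimensional families $B \hookrightarrow \scA(E,h)$, exactly as in the manifold setting, and apply the finite-dimensional statement there; since the equivariant closedness and the form of the representatives are local in the base direction, this suffices to conclude.
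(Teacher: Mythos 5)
Your proof is correct and takes essentially the same approach as the paper, which presents this corollary as an immediate consequence of the preceding proposition (verifying the moment map condition of Definition \ref{def:vb_moment_map} with $\langle \mu, e\rangle = -e$) combined with the equivariant Chern--Weil formalism of Section \ref{sec:equiv-cw}. Your additional checks---the $\G$-equivariance of $e \mapsto -e$ under conjugation and the justification that the formalism transfers to the infinite-dimensional base, either pointwise via Cartan calculus on the finite-rank bundle $\E$ or by restriction to finite-dimensional families---are details the paper leaves implicit, and you handle them correctly.
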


We can now give a new, geometric proof of the moment map interpretation of the Hermitian Yang--Mills equation. 

\begin{theorem} A moment map for the $\G$-action on $(\scA(E,h), \Omega)$ is given by $$\nu: \scA(E,h) \to \scA^0(\End_{SH} (E,h))^*,$$ where $$\langle\nu(A),e\rangle = \frac{1}{4\pi^2n}\int_X \tr\left( e \left( \Lambda_{\omega}F_{A} + 2\pi i\lambda \Id_E\right)\right)\omega^n, $$ and \(\lambda:=n\deg(E)/\rk(E)\).
\end{theorem}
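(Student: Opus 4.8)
The plan is to mirror the strategy of Theorem~\ref{thm:fin_dim_manifold_moment_map}: build an equivariantly closed form on $\scA(E,h)\times X$, push it down to $\scA(E,h)$ by fibre integration, and read off the moment map as the degree-zero component. Recall that $\Omega$ was defined as the fibre integral of $\widetilde\ch_2(\E,D_{\E})\wedge\omega^{n-1}$. I would replace $\widetilde\ch_2(\E,D_{\E})$ by its equivariant representative $\widetilde\ch_{2,\eq}(\E,D_{\E})$ computed above, whose Hamiltonian is $\langle\mu,e\rangle=-e$, and observe that $\omega$, being pulled back from $X$ on which $\G$ acts trivially, defines an equivariantly closed form with vanishing moment map component: the fundamental vector field $v_e$ of $e\in\Lie\G$ is purely horizontal, so $\iota_{v_e}\omega=0$. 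By Lemma~\ref{lemma:wedge} the product $\widetilde\ch_{2,\eq}(\E,D_{\E})\wedge\omega^{n-1}$ is then equivariantly closed on $\scA(E,h)\times X$, and by Lemma~\ref{lemma:fibre} so is its fibre integral over $\scA(E,h)\times X/\scA(E,h)$.

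For each fixed $e$, the degree-one component of the equation $d_{\eq}(\cdot)(e)=0$ reads $d\alpha_0=-\iota_{v_e}\alpha_2$, where $\alpha_0$ is the function (degree-zero) component of the fibre integral and $\alpha_2$ its $(1,1)$-form component. This is precisely the moment map equation once I identify $\alpha_2=\Omega$ and $\langle\nu,e\rangle=\alpha_0(e)$. That $\alpha_2=\Omega$ is immediate, since the non-equivariant part of $\widetilde\ch_{2,\eq}(\E,D_{\E})$ is $\widetilde\ch_2(\E,D_{\E})$, whose fibre integral against $\omega^{n-1}$ defines $\Omega$ by Proposition~\ref{curvature-bundle}.

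It remains to compute $\alpha_0(e)$. Expanding $\widetilde\ch_{2,\eq}(\E,D_{\E})(e)=\tfrac12\big(\tfrac{i}{2\pi}\big)^2\tr\big((F_{D_{\E}}-e)^2\big)$, the $\tr(F_{D_{\E}}^2)$ term carries no purely vertical part of degree $2n$ after wedging with $\omega^{n-1}$, and $\tr(e^2)$ has too small degree, so the only contribution to $\alpha_0$ comes from the cross term $-2\tr(eF_{D_{\E}})$. Taking the purely vertical component $F_A$ of $F_{D_{\E}}$, noting that only the $(1,1)$-part of $F_A$ survives the wedge with $\omega^{n-1}$, and using $F_A\wedge\omega^{n-1}=\tfrac1n(\Lambda_\omega F_A)\omega^n$, I obtain
\begin{align*}
\alpha_0(e)=-\left(\frac{i}{2\pi}\right)^2\int_X\tr(eF_A)\wedge\omega^{n-1}=\frac{1}{4\pi^2 n}\int_X\tr\big(e\,\Lambda_\omega F_A\big)\,\omega^n,
\end{align*}
which already satisfies the moment map equation $d\alpha_0=-\iota_{v_e}\Omega$.

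Finally, to arrive at the stated normalisation I would add the constant $\frac{1}{4\pi^2 n}\int_X\tr(e\cdot 2\pi i\lambda\Id_E)\omega^n=\frac{i\lambda}{2\pi n}\int_X\tr(e)\,\omega^n$. This term is independent of $A$, so it does not affect $d\langle\nu,e\rangle$ and the moment map equation persists; and since $e\mapsto\int_X\tr(e)\,\omega^n$ is $\mathrm{Ad}$-invariant (conjugation preserves the pointwise trace), adding it preserves $\G$-equivariance, which otherwise follows from the formalism of equivariant forms exactly as in Theorem~\ref{thm:fin_dim_manifold_moment_map}. The resulting $\nu$ is the claimed one, and its zero locus is exactly $\Lambda_\omega F_A+2\pi i\lambda\Id_E=0$, i.e.\ the Hermitian Yang--Mills equation $\tfrac{i}{2\pi}\Lambda_\omega F_A=\lambda\Id_E$. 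The points requiring the most care are the sign and factor bookkeeping in isolating the cross term together with the $\Lambda_\omega$ identity, and the verification that the added constant is genuinely $\mathrm{Ad}$-invariant so that equivariance is not destroyed.
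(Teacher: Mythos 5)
Your proposal is correct and follows essentially the same route as the paper: equivariantise $\widetilde\ch_2(\E,D_{\E})$ using the Hamiltonian $\langle\mu,e\rangle=-e$, wedge with the (equivariantly closed, since $\G$ acts trivially on $X$) form $\omega^{n-1}$, fibre integrate, and identify the $(1,1)$-part with $\Omega$ and the cross term $\tr(eF_A)$ with the moment map, with all signs and factors matching. The only cosmetic difference is at the end: where you add the constant $\frac{i\lambda}{2\pi n}\int_X\tr(e)\,\omega^n$ by hand and verify directly that it is $\mathrm{Ad}$-invariant and $A$-independent, the paper obtains the same term as the function component of a second equivariantly closed fibre integral, namely that of $\tr\bigl(\tfrac{i}{2\pi}(F_{D_\E}+\langle\mu,e\rangle)\bigr)\wedge\omega^n$, whose two-form component vanishes---both justifications are valid.
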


\begin{proof}
 
Consider the equivariantly closed form on $\scA(E,h)\times X$ defined by $$\tr\left(\frac{1}{2!}\left(\frac{i}{2\pi}(F_{D_{\E}}+ \langle \mu, e\rangle)\right)^2\right)\wedge \omega^{n-1}.$$ Its fibre integral $$\int_{\scA(E,h)\times X/\scA(E,h)} \tr\left(\frac{1}{2!}\left(\frac{i}{2\pi}(F_{D_{\E}}+ \langle \mu, e\rangle)\right)^2\right)\wedge \omega^{n-1}$$ is an equivariantly closed form on $\scA(E,h)$, and is hence a two-form plus a function. By Proposition \ref{curvature-bundle} the 2-form is $$\Omega(a,b) = -\frac{1}{8\pi^2}\int_{X} \tr\left(a \wedge b\right) \wedge \omega^{n-1},$$ while the function is given by \begin{equation}\label{eq:initial_moment_map}
A \mapsto  \int_X  \tr\left(\frac{1}{4\pi^2}F_{A}e\right)\wedge\omega^{n-1};
\end{equation} we note that the term involving $e^2$ vanishes under the fibre integral. 

Now, note we may produce another equivariant form on the base by integrating over the fibres a constant multiple of the form \[\tr\left(\frac{i}{2\pi}(F_{D_\mathcal{E}}+\langle\mu,e\rangle)\right)\wedge\omega^n.\] Since \(\omega\) is purely vertical and the curvature \(F_{D_\mathcal{E}}\) has no purely horizontal component, the 2-form component of the fibre integral vanishes, and we are left with the function \[-\frac{i}{2\pi}\int_X\tr(e)\omega^n = -\frac{i}{2\pi}\int_X\tr(e\Id_E)\omega^n.\] Adding on a constant multiple of this to the function \eqref{eq:initial_moment_map}, we produce the moment map \[\langle\nu(A),e\rangle = \frac{1}{4\pi^2n}\int_X \tr\left( e \left( \Lambda_{\omega}F_{A} + 2\pi i\lambda \Id_E\right)\right)\omega^n. \qedhere \] \end{proof}

\subsection{$Z$-critical connections}

The proof in the general setting of $Z$-critical connections is similar. Recall in this setting that a \emph{central charge} takes the form $$Z(E) = \sum_{j=0}^n \int_X \rho_j\alpha^j\cdot\ch(E)\cdot \Theta,$$ where $\ch(E)$ is the total Chern character of $E$, the $\rho_j\in \C$ are complex numbers and $\Theta \in \bigoplus_l H^{l,l}(X,\C)$ is an auxiliary cohomology class.

We associate to the central charge a closed $(1,1)$-form $\Omega_Z$ on $\scA(E,h)$. The process will be linear, and so we first explain how to associate a closed $(1,1)$-form to a single  one of the terms $$\int_X \alpha^{j}\cdot \ch_{k}(E)\cdot \Theta_{n-j-k}$$ comprising the central charge. Recall that we fix a closed, complex differential form $\theta \in \Theta$ and a K\"ahler metric $\omega \in \alpha$. We then associate to this term the $(1,1)$-form on $\scA(E,h)$ defined by the fibre integral \begin{equation}\label{bundle-fibre-int-z}
\int_{\scA(E,h) \times X/\scA(E,h)} \omega^{j}\wedge \tr\left(\frac{1}{(k+1)!}\left(\frac{i}{2\pi}F_{D_{\E}}\right)^{k+1}\right)\wedge\theta_{n-j-k}.
\end{equation} By linearity we obtain $\eta_Z \in \scA^{1,1}(\scA(E,h))$ and we set $$\Omega_Z = \Ima(e^{-i\phi(E)} \eta_Z ) \in  \scA^{1,1}(\scA(E,h)).$$

\begin{theorem}\label{connection-thm-infinite}
The moment map for the $\G$-action on $(\scA(E,h), \Omega_Z)$ is given by $$\nu: \scA(E,h) \to \scA^0(\End_{SH} (E,h))^*,$$ where $$\langle \nu(A),e\rangle = -\frac{i}{2\pi}\int_X \tr(e \Ima (e^{-i\phi(E)}\tilde Z(E,A)))\omega^n. $$
\end{theorem}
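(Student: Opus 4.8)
The plan is to mirror exactly the structure of the proof of the Hermitian Yang--Mills moment map theorem, replacing the single term $\tr\bigl(\tfrac{1}{2!}(\tfrac{i}{2\pi}F_{D_\E})^2\bigr)\wedge\omega^{n-1}$ with the general linear combination of terms defining $\eta_Z$. The key observation is that everything is linear in the terms $\int_X \alpha^j\cdot\ch_k(E)\cdot\Theta_{n-j-k}$ comprising the central charge, so it suffices to treat a single such term and then sum. First I would form the equivariantly closed differential form on $\scA(E,h)\times X$ obtained from \eqref{bundle-fibre-int-z} by replacing the Chern--Weil representative with its equivariant analogue, i.e.\ by substituting $F_{D_\E}\mapsto F_{D_\E}+\langle\mu,e\rangle$ where $\langle\mu,e\rangle=-e$ is the moment map computed in the previous subsection. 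Explicitly, for a single term this is the equivariant form
\[
\omega^{j}\wedge\tr\!\left(\frac{1}{(k+1)!}\left(\frac{i}{2\pi}\bigl(F_{D_\E}+\langle\mu,e\rangle\bigr)\right)^{k+1}\right)\wedge\theta_{n-j-k},
\]
which is equivariantly closed by equivariant Chern--Weil theory and Lemma \ref{lemma:wedge} (noting $\omega$ and $\theta$ are genuinely closed and pulled back, hence trivially equivariantly closed). By Lemma \ref{lemma:fibre}, its fibre integral over $\scA(E,h)\times X/\scA(E,h)$ is equivariantly closed on $\scA(E,h)$, and for degree reasons decomposes as a closed $(1,1)$-form plus a function of $A$.

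Next I would identify the two pieces. The $(1,1)$-form component is precisely the term \eqref{bundle-fibre-int-z} defining $\eta_Z$, exactly as in the Hermitian Yang--Mills case where the $2$-form component recovered $\Omega$. For the function component, I would expand the $(k+1)$-st power using $\langle\mu,e\rangle=-e$ and keep only those terms whose fibre integral does not vanish. Since the fibre integrand must contain exactly two horizontal legs to survive the fibre integral (the purely horizontal part of $F_{D_\E}$ vanishes, and $\omega,\theta$ are purely vertical), and since the linear-in-$e$ term in the expansion of the power produces $\End\E$-valued forms of the correct type, the surviving function contribution assembles, after summing over all terms and multiplying by the overall linearity coefficients, into $\tr(e\,\tilde Z(E,A))$ integrated against $\omega^n$ up to the universal constant. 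Taking $\Ima(e^{-i\phi(E)}\,\cdot\,)$ throughout—which commutes with fibre integration and with the real pairing against the skew-Hermitian $e$—turns the $(1,1)$-form into $\Omega_Z$ and the function into the claimed moment map $\langle\nu(A),e\rangle=-\tfrac{i}{2\pi}\int_X\tr\bigl(e\,\Ima(e^{-i\phi(E)}\tilde Z(E,A))\bigr)\omega^n$.

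The main obstacle I expect is bookkeeping the combinatorial factor and confirming that the linear-in-$e$ term in $\tr\bigl(\tfrac{1}{(k+1)!}(\tfrac{i}{2\pi}(F_{D_\E}-e))^{k+1}\bigr)$ reproduces exactly the $\ch_k$-level Chern--Weil representative defining $\tilde Z(E,A)$—that is, verifying that the geometric derivation recovers the algebraically-defined $\tilde Z(E,A)$ of Definition \ref{def:z-connection} with the correct normalization. This uses that the trace of a commutator vanishes (so $\langle\mu,e\rangle=-e$ may be moved past curvature factors freely) and that $\tfrac{d}{dt}\tr\bigl(\tfrac{1}{(k+1)!}(\tfrac{i}{2\pi}F)^{k+1}\bigr)$ in the direction of an endomorphism insertion yields $\tr\bigl(\tfrac{1}{k!}(\tfrac{i}{2\pi}F)^{k}\cdot(\tfrac{i}{2\pi}\,\cdot\,)\bigr)$; this linearization of the Chern character is precisely the mechanism that converts the $(k+1)$-fold power into the $\ch_k$-level $\End E$-valued form appearing in $\tilde Z(E,A)$. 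Unlike the manifold case of Theorem \ref{thm:fin_dim_manifold_moment_map}, here no integration-by-parts or vertical-Laplacian term arises, since the endomorphism moment map $\langle\mu,e\rangle=-e$ is already a genuine section of $\End\E$ rather than a Laplacian of a Hamiltonian; this is exactly why the vector bundle derivation is cleaner. Equivariance of $\nu$ is a formal consequence of the equivariant differential form formalism, as in Theorem \ref{thm:fin_dim_manifold_moment_map}, and so requires no separate argument.
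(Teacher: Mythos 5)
Your proposal is correct and follows essentially the same route as the paper's proof: replace $F_{D_\E}$ by the equivariant curvature $F_{D_\E}+\langle\mu,e\rangle$ with $\langle\mu,e\rangle=-e$ in each term \eqref{bundle-fibre-int-z}, fibre-integrate to get an equivariantly closed form on $\scA(E,h)$, identify the $(1,1)$-part with $\eta_Z$ and the function part with the linear-in-$e$ term $-\frac{i}{2\pi}\int_X \tr\bigl(e\,\omega^j\wedge \frac{1}{k!}(\frac{i}{2\pi}F_{A})^k \wedge \theta_{n-j-k}\bigr)$, then sum, multiply by $e^{-i\phi(E)}$ and take imaginary parts. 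One small expository slip: the function component survives the fibre integral precisely when the integrand is purely vertical of top degree $2n$ (zero horizontal legs), whereas the \emph{two}-horizontal-leg terms are what produce the $(1,1)$-form component; your conclusion about which terms contribute is nonetheless the right one.
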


\begin{proof}

The proof is similar to the previous one. We replace the differential form of interest with $$\int_{\scA(E,h) \times X/\scA(E,h)} \omega^{j}\wedge \tr\left(\frac{1}{(k+1)!}\left(\frac{i}{2\pi}(F_{D_{\E}}+\langle\mu,e\rangle)\right)^{k+1}\right)\wedge\theta_{n-j-k};$$ this equivariant differential form is equivariantly closed on $\scA(E,h)$. Expanding shows that this is the 2-form \eqref{bundle-fibre-int-z} plus the function $$A \mapsto -\frac{i}{2\pi}\int_X \tr\left(e\,\omega^j\wedge \frac{1}{k!}\left(\frac{i}{2\pi}F_{A}\right)^k \wedge \theta_{n-j-k}\right).$$ Taking a linear combination, multiplying by \(e^{-i\varphi(E)}\), then taking the imaginary part finishes the proof.\end{proof}

\subsection{Finite dimensions}

Although our proofs in the vector bundle setting have directly proven moment map properties on the infinite-dimensional space $\scA(E,h)$, the proofs go through in a similar manner in the finite-dimensional case and hence we only give the statement. We require holomorphic variation, which is more transparent considering Hermitian metrics on holomorphic vector bundles rather than connections directly. Thus consider an almost holomorphic vector bundle $\E \to B \times X$ of rank $r$, with $X$ a compact almost K\"ahler manifold, such that there is a holomorphic action of a compact Lie group $K$ on $B\times X$, lifting to $\E$ and hence making $\E$ a $K$-equivariant vector bundle, and acting trivially on $X$. We view $\E$ as a family of almost holomorphic vector bundles over $X$ parametrised by $B$.

We suppose $h$ is a $K$-invariant Hermitian metric on $\E$ with Chern connection $D_{\E}$. Define a closed $(1,1)$-form $\Omega_Z$ on $B$ associated to the central charge $Z$ by linearity in each component and attaching the form \begin{equation*}
\int_{B \times X/B} \omega^{j}\wedge \tr\left(\frac{1}{(k+1)!}\left(\frac{i}{2\pi}F_{D_{\E}}\right)^{k+1}\right)\wedge\theta_{n-j-k}
\end{equation*}
to the appropriate term of \(Z\).  As before, $\omega \in \alpha$ is an almost K\"ahler metric and $\theta \in \Theta$ is a closed complex differential form, both taken to be fixed. 
 Suppose in addition that $\mu$ is a moment map for the curvature $F_{D_{\E}}$, so that in particular for all $v \in \mfk$ $$\iota_v F_{D_{\E}} = -D_{\End\E}\langle \mu, v\rangle$$ where $\langle\mu, v\rangle \in \scA^0(\End\E).$

\begin{theorem}\label{connection-thm-finite}
The moment map for the $K$-action on $(B, \Omega_Z)$ is given by $$\nu:B \to \mfk^*,$$ where $$\langle\nu(b), v\rangle = \frac{i}{2\pi}\int_X \tr(\langle \mu_b, v\rangle \Ima (e^{-i\phi(\E_b)}\tilde Z(\E_b,A_b)))\omega^n, $$ where $ \Ima (e^{-i\phi(\E_b)}\tilde Z(\E_b,A_b))$ is defined fibrewise and \(\E_b,A_b,\mu_b\) denote the restrictions of \(\E,A,\mu\) to the fibre \(\{b\}\times X\).
\end{theorem}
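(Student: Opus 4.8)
The plan is to follow verbatim the structure of the proof of Theorem~\ref{connection-thm-infinite}, replacing the infinite-dimensional base $\scA(E,h)$ and the explicit gauge-theoretic moment map $\langle\mu,e\rangle=-e$ by the finite-dimensional base $B$ and the given moment map $\mu$ for $F_{D_{\E}}$ (whose existence in this setting is in any case guaranteed by \cite[Proposition 7.4]{BGV}, cf.\ Definition~\ref{def:vb_moment_map}). As in the infinite-dimensional case, equivariance of the claimed $\nu$ is a purely formal consequence of the equivariance of $\mu$ and of the naturality of equivariant Chern--Weil theory, so I would dispose of it first and then concentrate on the moment map identity $\iota_v\Omega_Z=-d\langle\nu,v\rangle$ for a fixed $v\in\mfk$.

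The core of the argument is to assemble an equivariantly closed form on $B$ whose degree decomposition reads off both $\Omega_Z$ and $\nu$. I would treat a single term $\int_X\alpha^j\cdot\ch_k(E)\cdot\Theta_{n-j-k}$ of the central charge. Using $\mu$, Section~\ref{sec:equiv-cw} produces the equivariant Chern--Weil representative obtained by replacing $F_{D_{\E}}$ with $F_{D_{\E}}+\langle\mu,v\rangle$ in $\tr(\tfrac{1}{(k+1)!}(\tfrac{i}{2\pi}F_{D_{\E}})^{k+1})$; this is equivariantly closed. Because $K$ acts trivially on $X$, the fundamental vector field of $v$ on $B\times X$ is tangent to the $B$-directions, so $\iota_v\omega=\iota_v\theta=0$, and the constant equivariant forms $v\mapsto\omega^j$ and $v\mapsto\theta_{n-j-k}$ are equivariantly closed. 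By Lemma~\ref{lemma:wedge} their wedge product with the equivariant Chern character is equivariantly closed on $B\times X$, and by Lemma~\ref{lemma:fibre} its fibre integral over $X$ is equivariantly closed on $B$.

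For dimension reasons this fibre integral is a complex $(1,1)$-form plus a complex-valued function. The $(1,1)$-form part, summed over all terms, is exactly $\eta_Z$, so taking $\Ima(e^{-i\phi(\E_b)}\,\cdot\,)$ recovers $\Omega_Z$; since the imaginary part of an equivariantly closed form is again equivariantly closed, it remains only to identify the function. The key bookkeeping is that the function component of a fibre integral over the $n$-dimensional fibre can only arise from the summand of total degree $2n$, i.e.\ the term \emph{linear} in $\langle\mu,v\rangle$, and within that term only the purely vertical $(k,k)$-part $F_{A_b}^k$ of the curvature survives, since the horizontal and mixed components of $F_{D_{\E}}$ cannot be paired with the vertical legs of $\omega^j$ and $\theta_{n-j-k}$ to yield a vertical $(n,n)$-form. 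Evaluating the combinatorial coefficient of the linear term in $\tfrac{1}{(k+1)!}(F_{D_{\E}}+\langle\mu,v\rangle)^{k+1}$ as $\tfrac{1}{k!}F_{D_{\E}}^k\langle\mu,v\rangle$ and using cyclicity of the trace, the function is $\tfrac{i}{2\pi}\int_X\tr(\langle\mu_b,v\rangle\,\omega^j\wedge\tfrac{1}{k!}(\tfrac{i}{2\pi}F_{A_b})^k\wedge\theta_{n-j-k})$. Summing over the terms of $Z$ reconstitutes $\tilde Z(\E_b,A_b)$, and applying $\Ima(e^{-i\phi(\E_b)}\,\cdot\,)$ yields the stated $\nu$.

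The computations are elementary once this framework is in place, so I do not expect a genuine obstacle; the only points requiring care are the degree count isolating the linear $\mu$-term and the vertical curvature, and the tracking of the constant $\tfrac{i}{2\pi}$ and the overall sign. The sign discrepancy with Theorem~\ref{connection-thm-infinite} (there $-\tfrac{i}{2\pi}$, here $+\tfrac{i}{2\pi}$) should be accounted for precisely by the gauge-theoretic normalisation $\langle\mu,e\rangle=-e$ used there, which is absent in the present setting where $\mu$ is a general moment map; I would verify this as a final consistency check.
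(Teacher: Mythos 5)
Your proposal is correct and is essentially the paper's own argument: the paper in fact gives no separate proof of Theorem \ref{connection-thm-finite}, stating only that the infinite-dimensional proof of Theorem \ref{connection-thm-infinite} "goes through in a similar manner", and your write-up is exactly that transferred argument — including the point the paper flags as essential (triviality of the $K$-action on $X$ making $\omega$ and $\theta$ equivariantly closed on $B\times X$), the degree count isolating the term linear in $\langle\mu,v\rangle$ and the purely vertical curvature, and the correct explanation that the sign difference from the infinite-dimensional statement comes from the normalisation $\langle\mu,e\rangle=-e$ there.
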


Note that we require the $K$-action on $X$ to be trivial, so that $\omega$ and $\theta$ pull back to \emph{equivariantly closed} forms on $B \times X$; this is used in the proof. 

\bibliographystyle{alpha}
\bibliography{bib}

\end{document}